\definecolor{LinkColor}{rgb}{0,0,0} 
\newtheorem{theorem}{Theorem}[section]
\newtheorem{corollary}[theorem]{Corollary}
\newtheorem{lemma}[theorem]{Lemma}
\newtheorem{proposition}[theorem]{Proposition}
\newtheorem{nr}[theorem]{}
\theoremstyle{definition}
\newtheorem{remark}[theorem]{Remark}
\newtheorem{problem}[theorem]{Problem}
\newcommand{\SL}{\operatorname{SL}}
\newcommand{\GL}{\operatorname{GL}}
\newcommand{\Tr}{\operatorname{Tr}}
\newcommand{\V}{\textup{V}}
\newcommand{\ZZ}{\mathbb{Z}}
\newcommand{\QQ}{\mathbb{Q}}
\newcommand{\C}{\textup{C}}
\newcommand{\Irr}{\operatorname{Irr}}
\newcommand{\sgn}{\operatorname{sgn}}
\renewcommand{\O}{\mathcal{O}}
\newcommand{\lra}{\longrightarrow}
\numberwithin{equation}{section}
\newcommand{\boxedcomp}[1]{\begin{tabular}{|c|} \hline \ensuremath{#1} \\ \hline \end{tabular}}
\newcommand{\boxxedcomp}[2]{\begin{tabular}{|c|} \hline \ensuremath{#1} \\ \hline \ensuremath{#2} \\ \hline \end{tabular}}
\title{On the first Zassenhaus conjecture and direct products }
\date{\today}
\author{Andreas B\"achle}
\address{(Andreas B\"achle) Vakgroep Wiskunde, Vrije Universiteit Brussel, Pleinlaan 2, 1050 Brussels, Belgium}
\email{\href{mailto:abachle@vub.ac.be}{abachle@vub.ac.be}}
\author{Wolfgang Kimmerle}
\address{(Wolfgang Kimmerle) Fachbereich Mathematik, IGT,  Universit\"{a}t Stuttgart, Pfaffenwaldring 57, 70550 Stuttgart, Germany}
\email{\href{mailto:kimmerle@mathematik.uni-stuttgart.de}{kimmerle@mathematik.uni-stuttgart.de}}
\author{Mariano Serrano}
\address{(Mariano Serrano) Departamento de Matem\'aticas, Universidad de Murcia, 30100, Murcia, Spain}
\email{\href{mailto:mariano.serrano@um.es}{mariano.serrano@um.es}}
\thanks{The first author is a postdoctoral researcher of the FWO (Research Foundation Flanders). 
The third author has been partially supported by the Spanish Government under Grant MTM2016-77445-P with "Fondos FEDER" and, by Fundaci\'on S\'eneca of Murcia under Grant 19880/GERM/15.}
\subjclass[2010] {16S34, 16U60, 20C05} 
\keywords{integral group ring, torsion units, Zassenhaus Conjecture, direct products, Frobenius groups, HeLP method}
\begin{document}

\maketitle

\begin{abstract}

In this paper we study the behavior of the first Zassenhaus conjecture (ZC1)
under direct products as well as the General Bovdi Problem (Gen-BP)
which turns out to be a slightly weaker variant of (ZC1).
Among others we prove that (Gen-BP) holds for Sylow tower
groups, so in particular for the class of supersolvable groups.

(ZC1) is established for a direct product of Sylow-by-abelian groups
provided the normal Sylow subgroups form together a Hall subgroup. We also show (ZC1)
for certain direct products with one of the factors a Frobenius group.

We extend the classical HeLP method to group rings with coefficients from any ring of algebraic integers.
This is used to study (ZC1) for the direct product $G \times A$, where $A$ is a finite abelian group and $G$ has order at most 95.
For most of these groups we show that (ZC1) is valid and for all of them
that (Gen-BP) holds. 
Moreover, we also prove that (Gen-BP) holds for the direct product of a Frobenius group with
any finite abelian group.

\end{abstract}

\section{Introduction}

Let $G$ be a finite group. Denote its integral group ring by $\ZZ G$
and let $\V(\ZZ G)$ be the group of normalized units of $\ZZ G$  (i.e. units with augmentation $1$). A long-standing conjecture of
H.~Zassenhaus \cite{Zassenhaus} (see also \cite[Section~37]{SehgalBook}) is as follows:

\vskip1em

\textbf{First Zassenhaus conjecture (ZC1).} Every torsion unit of $\ZZ G$ is conjugate to an element of $\pm
G$ in the units of $\mathbb{Q}G$. 

\vskip1em

As usual we say that (ZC1) or the first Zassenhaus conjecture holds for a particular group or a class of
finite groups if it is correct for that group or for each group of
this class.\footnote{As the name suggests, there are also a second and a third Zassenhaus conjecture dealing with conjugacy of (not necessarily cyclic) torsion subgroups of $\ZZ G$. (See \cite[Section~37]{SehgalBook} for more details.)}

(ZC1) holds for finite nilpotent groups \cite{Weiss91}, for
Sylow-by-abelian groups (i.e.\ groups having
a normal Sylow $p$-subgroup with abelian complement) \cite{Hertw06} and
for cyclic-by-abelian groups \cite{CMdR}. It has been
established as well for several non-solvable groups,
cf.\ \cite{BKM}. With algorithmic tools, mainly with the HeLP
method, (ZC1) has been shown for all groups of order at most $143$
\cite{BHKMS}. Very recently F.~Eisele and L.~Margolis announced
a metabelian counterexample to (ZC1) \cite{EisMar}. 

One main
object of this article is to study the behavior of (ZC1) under direct
products. 
Let $G$ and $H$ be finite groups and assume that (ZC1) holds for $G$ and $H$.
Then very little is known whether (ZC1) holds for $G
\times H$. If $H$ is an elementary abelian $2$-group this was answered affirmatively by C.~H\"ofert, cf.\ \cite{Hoe} or \cite{HoeKim}.   
M.~Hertweck proved that (ZC1) holds for $G \times H$ provided $G$ is
nilpotent and $H$ is an arbitrary finite group for which (ZC1) is known and whose order is coprime to
$|G|$ \cite[Proposition 8.1]{Hertweck08}.  

\vskip1em

In Section \ref{sec:nilpotent-by-abelian} we show that if $G$ is a direct product of Sylow-by-abelian groups then
(ZC1) holds for $G$ provided the normal Sylow subgroups form a Hall subgroup. 
Section \ref{sec:Frobenius} deals with Frobenius and Camina groups. Among other we show 
that (ZC1) holds for a direct product of a Frobenius group  with
metacyclic complements and a finite abelian group or of such a Frobenius
group with an arbitrary finite group of coprime order.

\vskip1em

In view of the example constructed by Eisele and Margolis, weaker
versions (you may also say substitutes) of the first Zassenhaus conjecture will come into the middle of
research. For a recent overview we recommend
\cite{MdR}. Two of them arise naturally in the context of our
article (the abbreviations are taken from \cite{MdR}). 

\vskip1em
The first one has been posed by the second author in \cite[Question
22]{Ari}.
\vskip1em
\textbf{(KP).}  Given a torsion element $u$ in $\V(\ZZ G)$, 
is there a finite group $H$ containing $G$ as subgroup such that
$u$ is conjugate in the units of $\mathbb{Q} H$ to an element of $G$? 

\vskip1em
The second one has been stated in the case when $n$ is a prime power
order by A.A.~Bovdi \cite[p.~26]{Bov}.
\vskip1em

\textbf{(Gen-BP).} 
Let $u$ be an element of $\V(\ZZ G)$
of order $n$. For a positive integer $m$, denote by $\varepsilon_{G[m]}(u)$ 
the sum of the coefficients of $u$ at all elements of order $m$ of
$G$. Is $\varepsilon_{G[m]}(u)=0$ for all $m \neq n$? 

\vskip1em

In \cite{MdR} it is shown that (KP) and (Gen-BP) are in the form
stated as above equivalent. Certainly concerning (KP) it is of interest to construct $H$
as small as possible and whether (ZC1) holds for $H$. 
In Section 3 we give a positive answer to (KP) in the case
when $G$ has a normal nilpotent Hall subgroup with abelian
complement. Here $G$ is embedded into a suitable direct product of
Sylow-by-abelian groups for which (ZC1) is valid (see Corollary~\ref{ZassenhausLargerGroup}). This covers in particular the groups
constructed by Eisele and Margolis and shows that for integral group
rings the property (ZC1)
is not closed under group rings of subgroups. In contrast to this, (KP) and
therefore (Gen-BP) as well are inherited by group rings of subgroups. Moreover we show 
in Section 3 that (Gen-BP) holds when $G$ is a Sylow tower group. In particular it holds for
supersolvable groups. 

\vskip1em

The case of groups of the form $G \times A$ where (ZC1) holds for $G$ and $A$ is finite abelian leads naturally to the
problem to consider the analogue of the first Zassenhaus conjecture for group
rings $\O G$ where $\O$ is a ring of algebraic integers (see Proposition~\ref{prop:Hertweck_reduction}). Extending the HeLP method to this situation we investigate 
this question when $G$ has order at most 95. Surprisingly this leads 
for many groups of small order to a positive result (see Proposition~\ref{ExtendedResult}). However in $\ZZ [i]S_4$ the case of a normalized unit of order $4$
remains. In a certain $\ZZ$-order containing $\ZZ [i]S_4$ there are such units arising from units of
$\ZZ [i, 1/2]S_4$ and we will give one such unit explicitly.
A more careful analysis of this case requires large calculations and will be treated in an extra article.

\section{Preliminaries and the HeLP method}\label{sectionHeLPMethod}

In this section we recall known results about torsion units
in group rings which we use in the following sections and explain briefly the HeLP method (for a more detailed discussion see \cite{HeLPOverview, HertweckPa, AngelMariano}).

Let $\ZZ_{\geq 0}$ denote the set of non-negative integers and $\ZZ_p$ the $p$-adic integers for a prime $p$. For a positive integer $n$, we always use $\zeta_n$ to denote a complex primitive $n$-th root of unit. If $K/F$ is a Galois extension, then $\Tr_{K/F} : K \rightarrow F$ denotes the trace map.   

Let $G$ be a finite group and $g\in G$. 
We denote by $g^G$ the conjugacy class of $g$ in $G$ and by $G'$ the commutator subgroup of $G$. 
If $p$ is a prime and $G$ has a normal Sylow $p$-subgroup, then this subgroup is denoted by $G_p$; if it has a normal Hall $p'$-subgroup,
then this group is denoted by $G_{p'}$.
For $\pi$ a set of primes, the $\pi$-part and the $\pi'$-part of $g$ are denoted by $g_\pi$ and $g_{\pi'}$, respectively. 
By $o(x)$ we denote the order of the group element $x$.

If $u=\sum_{h\in G}u_h h$ is an element of a group ring of $G$ then the \emph{partial augmentation} of $u$ at $g\in G$ is 
$$\varepsilon_{g}(u)=\sum_{h\in g^G}u_h.$$ For a positive integer $m$ we will denote the \emph{generalized trace} of $u$ (as it appears in (Gen-BP)) by 
$$\varepsilon_{G[m]}(u)=\sum_{\substack{h\in G \\ o(h)=m}}u_h.$$
Clearly, $\varepsilon_{G[m]}(u)$ is just the sum over all partial augmentations of $u$ at conjugacy classes containing elements of order $m$.

We will use the following elementary observation. 
Let $N$ be a normal subgroup of $G$ and set $\bar{G}=G/N$. 
For a torsion unit $u$ in $\ZZ G$, we shall extend the bar convention when writing $\bar{u}$ for the image of $u$ under the natural map $\ZZ G \rightarrow \ZZ \bar{G}$. 
To express that $x$ is conjugate to the element $y$ in the group $G$ we use the notation $x\sim_G y$; if the group is clear from the context the subscript $G$ will be dropped. Since any conjugacy class of $G$ maps onto a conjugacy class of $\bar{G}$, we have for any $x\in G$: 
\begin{equation}\label{ProjectionPA}
\varepsilon_{\bar{x}}(\bar{u})=\sum_{g^G,\ \bar{g}\sim \bar{x}} \varepsilon_{g}(u);
\end{equation}
where $\sum_{g^G}$ is an abbreviation of $\sum_{t\in T}$ for $T$ a set of representatives of the conjugacy classes of $G$.

The following collects some well known facts about torsion units in
integral group rings (see \cite[Corollary 1.5, Theorem 7.3, Lemma
41.5]{SehgalBook},\cite{CohLiv}, \cite{MRSW} and 
\cite[Theorem~2.3]{HertweckPa}). 

\begin{nr}\label{knownfacts}
        Let $G$ be a finite group and let $u$ be an element of $\V(\ZZ G)$ of order $n$. Then the following statements hold:
        \begin{enumerate}
                \item If $u\ne 1$ then $\varepsilon_1(u)=0$ (Berman-Higman Theorem). 
                \item $n$ divides the exponent of $G$.  
                \item \label{HertweckOrder} If $g\in G$ and $\varepsilon_g(u)\ne 0$ then the order of $g$ divides the order of $u$.
                \item \label{MRSW} $u$ is rationally conjugate to an element of $G$ if and only if 
                $\varepsilon_{g}(u^d)\geq 0$ for every $g\in G$ and
                every $d\mid n$. 
                \item \label{u_p_element} If $N$ is a normal $p$-subgroup of $G$ and $u$ maps
                  under the map $\ZZ G \lra \ZZ G/N$ to 1, then $u$ is
                  a $p$-element.   
        \end{enumerate}
\end{nr}

We will use these results without further mention.
 We say that a normalized unit $u$ has trivial partial augmentations if the partial augmentations of $u^d$, for all divisors $d$ of the order of $u$ coincide with partial augmentations of a fixed group element $g \in G$. In this case $u$ is rationally conjugate to the element $g$ by \ref{knownfacts}.\eqref{MRSW}.

Let $\rho$ be a representation of $G$ affording the character $\chi$ and  
let $u$ be a unit of $\ZZ G$ of order $n$. 
As $u^n =1$, we have that every eigenvalue of $\rho(u)$ is of the form $\zeta_n^\ell$ for some integer $0\leq \ell \leq n-1$ and the following formula gives the multiplicity of $\zeta_n^\ell$ in terms of the partial augmentations of $u$ (see \cite{LP89}): 
\begin{equation}\label{equationHELP}
\mu_\ell(u,\chi) = \frac{1}{n}\sum_{x^G}\sum_{d\mid n}\varepsilon_x(u^d)\Tr_{\QQ(\zeta_n^d)/\QQ}(\chi(x)\zeta_n^{-\ell d})\in \ZZ_{\geq 0}. 
\end{equation} 
Observe that (\ref{equationHELP}) makes sense because of \ref{knownfacts}.(\ref{HertweckOrder}). This formula can also be extended to Brauer characters modulo a prime not dividing $n$ (see \cite{HertweckPa}) and it is the bulk of the HeLP method. 

Let $\ZZ_{(G)}$ be the intersection of the localizations $\ZZ_{(p)}$, with $p$ a prime dividing the order of $G$. 
In several papers Hertweck considered the behavior of torsion
units of integral group rings mapping to the identity under the map
$\ZZ G \rightarrow \ZZ G/N ,$ where $N$ is a normal $p$-subgroup of $G$. His
main results are the following (see \cite[Theorem~B]{Hertweck13}, \cite{MarHer} for a proof and \cite[Lemma~2.2]{Hertweck08}, respectively): 

\begin{theorem}\label{HertweckZ_p}
        Let $N$ be a normal $p$-subgroup of a finite group $G$. Then any torsion unit in $\ZZ G$ which maps to the identity under the natural map $\ZZ G\rightarrow \ZZ G/N$ is conjugate to an element of $N$ by a unit in $\ZZ_p G$.
\end{theorem}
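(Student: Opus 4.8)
The hypothesis that $u$ maps to $1$ modulo the normal $p$-subgroup $N$ forces $u$ to be a $p$-element by \ref{knownfacts}\,\eqref{u_p_element}; write $o(u)=p^k$ and $C=\langle u\rangle$, a cyclic $p$-group. Since the assertion only asks for a conjugating unit in $\ZZ_p G$, I would work over $\ZZ_p$ throughout and view $M=\ZZ_p G$ as a $\ZZ_p[C\times N]$-lattice, with $C$ acting by left multiplication and $N$ by right multiplication; as a right $\ZZ_p G$-module $M$ is free of rank one. The plan is to use \textbf{Weiss's theorem} to produce a $\ZZ_p$-basis of $M$ permuted by $C\times N$, and then to read off from such a basis a unit conjugating $u$ into $N$.

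\textbf{Verifying the hypotheses and applying Weiss.} Restricted to the right $N$-action, $M=\ZZ_p G$ is free, hence a permutation $\ZZ_p N$-lattice. The right $N$-fixed sublattice $M^{N}=\{v\in\ZZ_p G: vn=v \text{ for all } n\in N\}$ has $\ZZ_p$-basis $\{g\hat{N}: gN\in G/N\}$ (where $\hat{N}=\sum_{n\in N}n$), so $M^{N}\cong\ZZ_p\overline{G}$; because $\bar u=1$, the left $C$-action on $M^{N}$ is trivial, making it a permutation $\ZZ_p C$-lattice. Weiss's theorem, applied to the $p$-group $C\times N$ with normal subgroup $1\times N$ and quotient $C$, then gives a $\ZZ_p$-basis $B$ of $M$ permuted by $C\times N$. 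Freeness over $\ZZ_p N$ shows $N$ acts freely on $B$, while $\bar u=1$ shows that $C$ fixes each $N$-orbit setwise. Choosing orbit representatives $b_1,\dots,b_r$ with $r=[G:N]$ yields a right $\ZZ_p N$-basis with $u\,b_i=b_i n_i$ for suitable $n_i\in N$, and hence $u^d b_i=b_i n_i^{\,d}$. Comparing the trace of left multiplication by $u^d$ on $M$, which equals $|G|\,\varepsilon_1(u^d)=0$ whenever $u^d\neq1$ by \ref{knownfacts}\,(1), with $\sum_i|N|\,[n_i^{\,d}=1]$ forces every $n_i$ to have order exactly $p^k=o(u)$.

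\textbf{Extracting the conjugating unit (the main obstacle).} Twisting the same trace computation by right multiplication with $g^{-1}$ for $g\in N$ gives, for each $g\in N$, the identity $|\mathrm{C}_G(g)|\,\varepsilon_g(u)=|\mathrm{C}_N(g)|\cdot\#\{i: n_i\sim_N g\}$, and likewise for every power $u^d$. In particular all partial augmentations of $u$ and its powers are non-negative on $N$ and vanish on elements whose order is not a power of $p$. The genuinely delicate point, which I expect to be the crux of the argument, is to upgrade this numerical information into an actual conjugation by a single unit of $\ZZ_p G$ carrying $u$ into $N$: one must show that the elements $n_i$ may be chosen $G$-conjugate to one fixed $n\in N$ and that the right $\ZZ_p N$-basis change relating $\{b_i\}$ to a set of coset representatives can be promoted to left multiplication by a unit of $\ZZ_p G$. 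This step is exactly where the full $p$-adic rigidity of Weiss's theorem must be exploited, using the rank-one $\ZZ_p G$-freeness of $M$ together with its $C\times N$-permutation structure, rather than merely the bimodule decomposition $M\cong\bigoplus_i{}_{n_i}\ZZ_p N$; the non-negativity recorded above, combined with the $\ZZ_p$-local form of the criterion in \ref{knownfacts}\,\eqref{MRSW}, then delivers the stated conjugacy.
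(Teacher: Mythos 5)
The paper does not actually prove this theorem: it is quoted as Theorem~B of Hertweck's manuscript \cite{Hertweck13}, with \cite{MarHer} cited for a written-out proof, so there is no in-paper argument to compare against. Measured against the proof in those references, your strategy --- Weiss's permutation-lattice theorem applied to $M=\ZZ_p G$ as a $\ZZ_p[\langle u\rangle\times N]$-lattice --- is the right one, and everything up to and including your trace computations is sound: the hypotheses of Weiss's theorem are verified correctly ($M|_N$ free, $M^N\cong\ZZ_p\bar G$ with trivial $\langle u\rangle$-action since $\bar u=1$); $N$ acts freely on the permutation basis because $\ZZ_p[N/H]$ is not free for $H\neq 1$; the orbit sums are fixed by $u$, so $u b_i=b_i n_i$; and the two trace identities correctly give that each $n_i$ has order exactly $o(u)$ and that $|\mathrm{C}_G(g)|\,\varepsilon_g(u^d)=|\mathrm{C}_N(g)|\cdot\#\{i: n_i^d\sim_N g\}\geq 0$ for all $d$ and $g$.

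The gap is exactly where you flag it, and it is not closable by the means you invoke. Non-negativity of all partial augmentations of all powers yields, via \ref{knownfacts}.\eqref{MRSW}, conjugacy to an element of $N$ by a unit of $\QQ G$ only; there is no ``$\ZZ_p$-local form'' of that criterion available in this paper or as an off-the-shelf result, and asserting one here essentially begs the question, since upgrading numerical partial-augmentation data to conjugacy inside $\ZZ_p G$ is precisely the content of Hertweck's theorem. The known proof finishes differently: one uses that $u$ and $v$ are conjugate in the units of $\ZZ_p G$ if and only if ${}_u(\ZZ_p G)\cong{}_v(\ZZ_p G)$ as $(\ZZ_p\langle u\rangle,\ZZ_p G)$-bimodules (a right $\ZZ_p G$-isomorphism is left multiplication by some $\alpha$, which is a unit, and intertwining the left actions forces $u\alpha=\alpha v$), and then identifies ${}_u(\ZZ_p G)$ with ${}_n(\ZZ_p G)$ for a suitable $n\in N$ using the permutation structure supplied by Weiss together with Krull--Schmidt over the complete local ring $\ZZ_p$. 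Your decomposition $\bigoplus_i{}_{n_i}\ZZ_p N$ records only the $(\ZZ_p\langle u\rangle,\ZZ_p N)$-bimodule structure, which is strictly weaker; you give no argument that the $n_i$ are $G$-conjugate to a single $n$, nor that the basis change is realized by left multiplication by a unit of $\ZZ_p G$. So the proposal reconstructs the correct first half of the cited proof but leaves its essential second half unproved.
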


\begin{proposition}\label{HertweckPaugmentation}
        Let $G$ be a finite group and $p$ a prime integer. Let $R$ be a $p$-adic ring and $u$ a torsion unit of $RG$ with augmentation one.  Suppose that the $p$-part of $u$ is conjugate to an element $x$ of $G$ in the units of $RG$ and $g$ is an element of $G$ such that the $p$-parts of $x$ and $g$ are not conjugate in $G$. Then $\varepsilon_g(u)=0$.
\end{proposition}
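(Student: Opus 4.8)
The plan is to remove the role of the conjugating unit by the conjugation-invariance of partial augmentations, then invert the character table to reduce the statement to a $p$-section orthogonality relation for the torsion unit $u$. First I would use that each map $\varepsilon_g$ is invariant under conjugation by units of $RG$: since $u_p$ is conjugate to $x$ in the units of $RG$, I may replace $u$ by a suitable conjugate and assume outright that $u_p = x$, which alters no partial augmentation and keeps $u$ a torsion unit of augmentation one. As $x$ is a $p$-element, $x_p = x$, so the hypothesis reads $g_p \not\sim_G x$ and the goal becomes $\varepsilon_g(u)=0$. Finally write $u = u_p u_{p'} = x\,v$ with $v = u_{p'}$ the commuting $p'$-part, of order $m$ coprime to $p$.

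Next I would extend scalars to a finite extension of the fraction field of $R$ that is a splitting field for $G$ and pass to ordinary characters, using the inversion formula
\[
\varepsilon_g(u) = \frac{1}{|C_G(g)|}\sum_{\chi\in\Irr(G)}\overline{\chi(g)}\,\chi(u),
\]
which follows from $\chi(u) = \sum_{h^G}\varepsilon_h(u)\chi(h)$ together with column orthogonality; here $\chi(u):=\Tr\rho(u)$ for a representation $\rho$ affording $\chi$. Thus it suffices to prove
\[
\sum_{\chi\in\Irr(G)}\overline{\chi(g)}\,\chi(u) = 0 \qquad\text{whenever } g_p \not\sim_G x.
\]

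The heart of the argument is to expand both factors through Brauer's theory of $p$-sections. For the genuine group element $g = g_p g_{p'}$ this is classical: $\chi(g) = \sum_{\psi} d^{\,g_p}_{\chi\psi}\,\psi(g_{p'})$, with $\psi$ running over $\operatorname{IBr}(C_G(g_p))$ and $d^{\,g_p}_{\chi\psi}$ the generalized decomposition numbers attached to $g_p$. For $u = xv$ I would argue analogously: reducing $\rho(u)=\rho(x)\rho(v)$ modulo the maximal ideal of $R$ exhibits it as a multiplicative Jordan decomposition with unipotent part coming from the $p$-element $x$ and semisimple part from the $p'$-part $v$, which should yield $\chi(u) = \sum_{\varphi} d^{\,x}_{\chi\varphi}\,\varphi(v)$ with $\varphi\in\operatorname{IBr}(C_G(x))$. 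Substituting both expansions, the inner sum over $\chi$ becomes $\sum_\chi \overline{d^{\,g_p}_{\chi\psi}}\,d^{\,x}_{\chi\varphi}$, which vanishes because generalized decomposition numbers belonging to the non-conjugate $p$-elements $g_p$ and $x$ are orthogonal; hence the whole expression is $0$ and $\varepsilon_g(u)=0$.

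The main obstacle is the step $\chi(u) = \sum_\varphi d^{\,x}_{\chi\varphi}\,\varphi(v)$. The generalized decomposition expansion and its cross-section orthogonality are classically stated for an honest element $xv$ with $v$ a $p'$-element of $C_G(x)$, whereas here $v = u_{p'}$ is merely a unit of $RG$ commuting with $x$ and need not lie in $RC_G(x)$. I would therefore have to show that the generalized decomposition data depend only on the unipotent part $\rho(x)$ of the reduction and are insensitive to replacing a genuine $p'$-element by the commuting $p'$-unit $v$, so that the expansion and the orthogonality persist. This is precisely where the hypothesis that $R$ is a $p$-adic ring is indispensable, since it guarantees a well-behaved reduction and a Jordan decomposition of $\rho(u)$ into commuting unipotent and $p$-regular parts.
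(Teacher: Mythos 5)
A preliminary remark: the paper does not prove this proposition at all. It is quoted as a known result of Hertweck (the text cites \cite[Lemma~2.2]{Hertweck08}; the same statement is \cite[Theorem~2.3]{HertweckPa}), so there is no in-paper argument to compare yours with, and your proposal has to stand on its own. Its outer layers are fine: partial augmentations satisfy $\varepsilon_g(ab)=\varepsilon_g(ba)$, so they are invariant under conjugation by units of $RG$ and you may indeed assume $u_p=x$; the inversion formula $\varepsilon_g(u)=\tfrac{1}{|C_G(g)|}\sum_{\chi}\overline{\chi(g)}\,\chi(u)$ follows from $\chi(u)=\sum_{h^G}\varepsilon_h(u)\chi(h)$ and column orthogonality; and the orthogonality of generalized decomposition numbers attached to non-conjugate $p$-elements is a correct classical fact. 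The genuine gap is exactly the step you flag yourself: the expansion $\chi(u)=\sum_{\varphi\in\operatorname{IBr}(C_G(x))}d^{x}_{\chi\varphi}\,\varphi(v)$ for the unit $u=xv$. This is not a routine transfer of Brauer's second main theorem from an honest element $xs$ with $s$ a $p$-regular element of $C_G(x)$: your $v=u_{p'}$ lies in $C_{RG}(x)$ but in general not in $RC_G(x)$, so $\varphi(v)$ is not even defined without first transporting $v$ through a Brauer-homomorphism-type map $C_{RG}(x)\to (R/\mathfrak{p})C_G(x)$ and lifting, and one must then prove that the resulting values really interpolate $\chi(u)$ against the numbers $d^{x}_{\chi\varphi}$. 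That is not a technical footnote; it is essentially the entire content of the proposition.

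Worse, the route is in danger of circularity. The known statement that character values of torsion units admit a $p$-section expansion (the Brauer-character version of $\chi(u)=\sum_{x^G}\varepsilon_x(u)\chi(x)$ that this paper itself quotes from \cite[Theorem~3.2]{HertweckPa}) is obtained by \emph{first} knowing that $\varepsilon_h(u)=0$ for all $h$ whose $p$-part is not conjugate to $x$ --- i.e., precisely the conclusion you are trying to prove --- and only then rewriting $\chi(u)$ as a sum over the $p$-section of $x$ and applying the ordinary second main theorem class by class. A naive Jordan-decomposition argument applied to $\rho(u)$ modulo the maximal ideal of $R$ does not by itself produce the coefficients $d^{x}_{\chi\varphi}$, because it forgets which group-theoretic $p$-section the ``unipotent part'' belongs to. So the proposal has the right skeleton and a correct final orthogonality step, but the load-bearing middle identity is unproved, cannot be obtained by merely citing the classical theory, and the most natural way to establish it already presupposes the proposition. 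As written, the argument does not close.
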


It is not known whether the first Zassenhaus conjecture behaves well with respect to quotient groups (for subgroups and extensions see Remark \ref{not_subgroup_closed} below). As mentioned in the introduction, also for direct products barely anything is known. The only easy observation one can make is the following. 

\begin{remark}\label{rem:ZC_for_direct_factors} The first Zassenhaus conjecture is inherited by direct factors. That is, if the first Zassenhaus conjecture is known for $G = H \times K$, the direct product of $H$ and $K$, then the first Zassenhaus conjecture has to hold for $H$ and $K$.

Denote by $\iota \colon \QQ H \to \QQ G$ and $\pi \colon \QQ G \to \QQ H$ the ring homomorphisms induced by the inclusion of $H$ into $G$ and the projection of $G$ onto $H$, respectively. Assume that $u \in \V(\ZZ H)$ is a torsion element. Then also $\iota(u) \in \V(\ZZ G)$ is a torsion element and by assumption conjugate to an element $g \in G$ by a unit $x \in \QQ G$. But then $u = \pi(\iota(u))$ is conjugate by the unit $\pi(x) \in \QQ H$ to $\pi(g) \in H$. 
\end{remark}

\section{Nilpotent-by-Abelian Groups}\label{sec:nilpotent-by-abelian}

The following lemma is a slight generalization of \cite[Lemma 5.5]{Hertw06} for our purpose.

\begin{lemma}\label{lemma:pi-conj-classes} 
        Let $G$ be a finite group and $\pi$ be a set of primes.  
        Suppose that $N$ is a normal nilpotent Hall $\pi$-subgroup of $G$ with abelian complement $K$. 
        Let $x \in N$ be a $\pi $-element and let $k \in K$. 
        Then
                $$C = \{g \in G \; :\; g_{\pi} \sim x \text{ and } g = n  k \mbox{ for some } n \in N \}$$ 
        forms a conjugacy class of $G$.  
\end{lemma}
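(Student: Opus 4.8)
The plan is to exploit the decomposition $G = NK$ with $N \cap K = 1$: since $N$ is a Hall $\pi$-subgroup, $K$ is a $\pi'$-group, every $g \in G$ is uniquely $g = nk$ with $n \in N$ and $k \in K$, and the projection $G \to G/N \cong K$ sends $g$ to its $K$-coordinate. First I would record two elementary facts. Because $K$ is a $\pi'$-group, the image of $g_\pi$ in $G/N$ is trivial, so $g_\pi \in N$ for every $g$; and because $K \cong G/N$ is abelian, conjugation fixes the $K$-coordinate of any element. Together with $(g^h)_\pi = (g_\pi)^h$ these show that $C$ is invariant under $G$-conjugation, i.e.\ a union of conjugacy classes, so it remains to prove that any two of its elements are conjugate.

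Next I would normalize the $\pi'$-part of a representative. Since $G$ is nilpotent-by-abelian, hence solvable, Hall's theorem applies: every $\pi'$-subgroup lies in a conjugate of the Hall $\pi'$-subgroup $K$, so $g_{\pi'}$ is $G$-conjugate into $K$, and comparing images in $G/N \cong K$ pins this conjugate down to $k$ itself, giving $g_{\pi'} \sim_G k$. Replacing $g$ by a conjugate lying in $C$, I may therefore assume $g_{\pi'} = k$, whence $g = nk$ with $n = g_\pi \in N$ commuting with $k$, i.e.\ $n \in C_N(k)$, and $n \sim_G x$. Thus every element of $C$ is conjugate to one of the form $nk$ with $n \in C_N(k)$ and $n \sim_G x$, and it suffices to show that all of these are mutually conjugate.

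So take $n, n' \in C_N(k)$ with $n \sim_G n'$ and aim for $nk \sim_G n'k$. Writing a conjugator as $mj$ with $m \in N$, $j \in K$ and using $k^{j}=k$, one computes $n^{mj} = (n^{m})^{j}$, so $\tilde n := (n')^{j^{-1}}$ again lies in $C_N(k)$ and is $N$-conjugate to $n$; moreover $(n'k)^{j^{-1}} = \tilde n k$, so it is enough to conjugate $nk$ to $\tilde n k$. Now both $n$ and $\tilde n$ are $k$-fixed elements lying in a single $N$-conjugacy class $\Omega = n^{N}$. Here is the crux: $\langle k\rangle$ is a $\pi'$-group acting coprimely on the nilpotent (hence solvable) $\pi$-group $N$, and it stabilizes $\Omega$ because $n^{k}=n\in\Omega$; by Glauberman's lemma on coprime action, the centralizer $C_N(\langle k\rangle)=C_N(k)$ acts transitively on the fixed points $C_\Omega(\langle k\rangle)$, which contain both $n$ and $\tilde n$. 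Hence there is $c \in C_N(k)$ with $n^{c} = \tilde n$, and since $c$ centralizes $k$ we obtain $(nk)^{c} = \tilde n k$, as required.

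I expect the Glauberman step to be the main obstacle, in the sense that it is the one place where the hypotheses are genuinely used: the coprimality comes from $N$ being a Hall $\pi$-subgroup and the solvability from $N$ nilpotent and $K$ abelian, whereas everything else is bookkeeping in the semidirect product that works precisely because $K$ is abelian. Combining the reductions shows that all reduced representatives are conjugate, so $C$ is a single conjugacy class. (Nonemptiness amounts to $x$ being $G$-conjugate to an element of $C_N(k)$; this is automatic in the situation where the lemma is applied, namely when $x$ is the $\pi$-part of an element of the coset $Nk$, and in that case the argument above identifies $C$ with that element's class.)
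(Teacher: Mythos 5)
Your proof is correct, but it follows a genuinely different route from the paper's. The paper first conjugates both elements so that their $\pi$-parts equal $x$ on the nose, writes their $\pi'$-parts as $n_1k$ and $n_2n_1k$, and then works inside the two-generator subgroup $H=\langle n_1k, n_2\rangle$ (which is centralized by $x$): there $\langle n_1k\rangle$ and $\langle n_2n_1k\rangle$ are two complements to $N\cap H$, and the conjugacy part of the Schur--Zassenhaus theorem produces an element $h\in H$ carrying one element to the other while fixing $x$. You instead normalize the $\pi'$-part to be literally $k$ (Hall's theorem plus the isomorphism $K\cong G/N$), reduce to representatives $nk$ with $n\in C_N(k)$ lying in a single $N$-class $\Omega$, and invoke Glauberman's lemma for the coprime action of $\langle k\rangle$ on $\Omega$ to find a conjugator in $C_N(k)$. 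The two arguments are close cousins --- Glauberman's lemma is itself proved by a Schur--Zassenhaus-type argument, and both need a solvable factor (supplied here by the nilpotency of $N$, or for you alternatively by the cyclicity of $\langle k\rangle$) --- but the paper's version is more self-contained, while yours is arguably more conceptual and makes the role of the coprime-action machinery explicit. Your closing remark that $C$ may be empty as stated (so that, strictly, both proofs only show any two elements of $C$ are conjugate) is a fair observation that applies equally to the paper's formulation and is harmless in the application, where $x$ is taken to be the $\pi$-part of an actual element of the coset $Nk$.
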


\begin{proof} Let $f, g \in C$. 
        As $K$ is abelian, we may after conjugation assume that $f_{\pi} = g_{\pi} = x$, $f = x \cdot n_1  k$ and  $g = x  \cdot n_2  n_1  k$ with $n_1, n_2 \in N$. 
        Let $H = \langle n_1k, n_2 \rangle$. 
        Having in mind that $n_1k$ and $n_2n_1k$ are the $\pi'$-parts of $f$ and $g$ respectively, we get that $x\in \C_G(H)$. 
        Let $M = N \cap H$.  Then $M$ is a normal subgroup of $H$ and $K_1 = \langle n_1k \rangle$ and $K_2 = \langle n_2n_1k \rangle $ are  complements to $M$ in $H$. 
        By the Schur-Zassenhaus Theorem we get that $K_1^h = K_2$ for some $h \in H$. 
        As $(n_1k)^h \in Nk$ we get  
        $$ (n_1k)^h  = n_2n_1k.$$ 
    It follows that 
        $$ f^h = x^h (n_1k)^h = x n_2 n_1 k = g,$$ 
        which establishes the lemma. 
\end{proof}

\begin{theorem}\label{theo:main}
        Let $G=\left(P_1 \rtimes A_1 \right) \times \dots \times \left(P_k \rtimes A_k\right)$ be a finite group where $A_j$ is an finite abelian group for every $j$ and $P_1 \times \dots \times P_k$ is a Hall subgroup of $G$. 
        Then the first Zassenhaus conjecture holds for $G$. 
\end{theorem}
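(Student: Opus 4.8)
The plan is to show that every torsion unit $u \in \V(\ZZ G)$ of order $n$ has trivial partial augmentations and then invoke \ref{knownfacts}.\eqref{MRSW}. Write $N = P_1 \times \dots \times P_k$ and $K = A_1 \times \dots \times A_k$, so that $G = N \rtimes K$ with $N$ a normal nilpotent Hall $\pi$-subgroup (where $\pi$ is the set of primes dividing $|N|$) and $K$ an abelian complement; the Hall hypothesis means precisely that $\gcd(|N|,|K|)=1$, so each $A_j$ is a $\pi'$-group and, for every $p \in \pi$, the Sylow $p$-subgroup $N_p$ of $N$ is normal in $G$ and is a full Sylow $p$-subgroup of $G$. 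I would treat the complementary sets $\pi$ and $\pi'$ with complementary tools and glue the resulting data together through Lemma \ref{lemma:pi-conj-classes}.

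First I would pin down the $\pi$-part. Fix $p \in \pi$ and let $u_p$ be the $p$-part of $u$. Since $G/N_p$ has order prime to $p$, the image of $u_p$ in $\ZZ(G/N_p)$ is a $p$-element whose order divides $\exp(G/N_p)$, hence is trivial; thus $u_p$ maps to $1$ under $\ZZ G \to \ZZ(G/N_p)$, and Theorem \ref{HertweckZ_p} produces a unit of $\ZZ_p G$ conjugating $u_p$ to some $x_p \in N_p$. Viewing $u$ in $\ZZ_p G$ and applying Proposition \ref{HertweckPaugmentation} then forces $\varepsilon_g(u)=0$ whenever the $p$-part of $g$ is not $G$-conjugate to $x_p$. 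Running this over all $p \in \pi$ and using that conjugacy, $p$-parts and $\pi$-parts all decompose componentwise along the direct factors $G_j = P_j \rtimes A_j$ (in which $p_j$ is the unique $\pi$-prime), I expect to assemble a single $\pi$-element $y \in N$ with the property that $\varepsilon_g(u)\neq 0$ implies $g_\pi \sim_G y$.

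Next I would control the $\pi'$-direction through the abelian quotient. As $G/N \cong K$ is abelian, $\bar u \in \V(\ZZ K)$ is a trivial unit $\bar u = k_0 \in K$, and \eqref{ProjectionPA} yields $\sum_{\bar g = k}\varepsilon_g(u) = \delta_{k,k_0}$ for every $k \in K$; that is, the total partial augmentation carried by each coset of $N$ vanishes except on $Nk_0$, where it equals $1$. Now Lemma \ref{lemma:pi-conj-classes} is exactly the bridge: for the fixed $\pi$-element $y$ and each $k \in K$ the set $C(y,k) = \{g : g_\pi \sim_G y,\ \bar g = k\}$ is a single conjugacy class, so among all classes lying in a fixed coset $Nk$ only $C(y,k)$ can carry nonzero partial augmentation. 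Comparing with the coset sums forces $\varepsilon_{C(y,k)}(u) = \delta_{k,k_0}$, so $u$ has trivial partial augmentations concentrated on $C(y,k_0)$. Choosing $g_0 \in C(y,k_0)$ and rerunning the whole argument for each power $u^d$ (whose pinned $\pi$-part is $y^d$ and whose image in $K$ is $k_0^d$, so that the relevant class is that of $g_0^d$) gives $\varepsilon_g(u^d) \ge 0$ for all $g$ and all $d\mid n$, and \ref{knownfacts}.\eqref{MRSW} finishes the proof.

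The main obstacle is the step that genuinely uses the direct-product structure together with the Hall condition, namely ruling out cancellation. Knowing via \cite{Hertw06} that each projection $\pi_j(u) \in \V(\ZZ G_j)$ is rationally conjugate to a group element (cf.\ Remark \ref{rem:ZC_for_direct_factors}) only says that the marginals of the partial-augmentation distribution onto the factors $G_j$ are point masses; since partial augmentations may be negative, this alone does \emph{not} force the joint distribution to be a point mass. The coprimality of $|N|$ and $|K|$ is what breaks the deadlock: it lets the $p$-adic rigidity of Theorem \ref{HertweckZ_p} and Proposition \ref{HertweckPaugmentation} pin the entire $\pi$-part to the single class of $y$, while the abelian quotient pins the mass in each $N$-coset; these two pieces of information concern complementary primes and meet precisely in the conjugacy-class description of Lemma \ref{lemma:pi-conj-classes}, leaving no room for cancellation. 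I would expect the bookkeeping that identifies $y$ uniformly across the factors---verifying that ``$g_p \sim_G x_p$ for all $p\in\pi$'' is equivalent to ``$g_\pi \sim_G y$'' in the product---to be the most delicate point to write out carefully.
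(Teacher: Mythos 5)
Your proposal is correct and follows essentially the same route as the paper's proof: pin the $\pi$-part of $u$ via Theorem \ref{HertweckZ_p} and Proposition \ref{HertweckPaugmentation}, pin the $N$-coset via the abelian quotient $\bar{G}$, and glue the two pieces through Lemma \ref{lemma:pi-conj-classes} before invoking \ref{knownfacts}.\eqref{MRSW}. The step you flag as delicate --- assembling ``$g_p\sim_G x_p$ for all $p\in\pi$'' into ``$g_\pi\sim_G x$'' --- is exactly where the paper appeals to the nilpotency of $N$, and your justification (each factor $P_j\rtimes A_j$ contains a unique $\pi$-prime and conjugacy in a direct product is componentwise) is the right way to make that line precise.
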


\begin{proof}
        Let $u$ be a torsion element of $\V(\ZZ G)$. We will show that all  partial augmentations of $u$ but one vanish. 
        Let bars denote reduction modulo $N=P_1\times \dots \times P_k$ and note that all torsion units of $\ZZ \bar{G}$ are trivial as $\bar{G}$ is abelian.   
        Denote by $\pi = \pi(N)$ the set of prime divisors of the order of $N$. 
        
        For any $p\in \pi$, let $u_p$ be the $p$-part of $u$ and  let $P$ be the Sylow $p$-subgroup of $G$.  
        Then $u_p$ maps to $1$ under the natural map $\ZZ G\rightarrow \ZZ G/P$ as $G/P$ is a $p'$-group. 
        Thus $u_p$ is conjugate in the units of $\ZZ_pG$ to an element $x_p\in P$ by Theorem \ref{HertweckZ_p}. 
        Using Proposition \ref{HertweckPaugmentation} we deduce that $\varepsilon_g(u)=0$ for every $g\in G$ whose $p$-part is not conjugate to $x_p$. As $N$ is nilpotent, applying this argument for every $p\in \pi$  we get that $\varepsilon_g(u)=0$ for every $g\in G$ whose $\pi$-part is not conjugate to $x=\prod_{p\in \pi} x_p\in N$. 
        
        Take any $h \in G$. 
        The partial augmentation $\varepsilon_{\bar{h}}(\bar{u})$  is the sum of all partial augmentations $\varepsilon_g(u)$ with $g\in G$ and $\bar{g} = \bar{h}$ in $\bar{G}$ (since $\bar{G}$ is abelian). 
        By the previous paragraph, we need to sum only over conjugacy classes of elements $g\in G$ whose $\pi$-part is conjugate to $x$ (and $\bar{g} = \bar{h}$ of course). By Lemma~\ref{lemma:pi-conj-classes}, this sum extends over a single conjugacy class (if any).
        
        Thus, $\varepsilon_{\bar{g}}(\bar{u}) = \varepsilon_{g}(u)$ for all $g \in G$ whose $\pi$-part is conjugate to $x$. By the
        Berman-Higman Theorem, $\varepsilon_k (\bar{u}) \ne 0$ for exactly one $k \in \bar{G}$. 
        Applying Lemma \ref{lemma:pi-conj-classes} again, we see that there is only one partial augmentation of $u$ different from $0$, as desired.
\end{proof}

\begin{corollary}\label{ZassenhausLargerGroup} Assume that the finite group $H$ has a normal nilpotent Hall subgroup $N$ such that $H/N$ is abelian. Then $H$ can be embedded into a group $G$ for which the first Zassenhaus conjecture holds. In particular (KP) has an affirmative answer for $H$. \end{corollary}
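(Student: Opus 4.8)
The plan is to realize an overgroup $G$ explicitly as a direct product of Sylow-by-abelian groups sharing the complement of $N$ diagonally, and then to invoke Theorem~\ref{theo:main}. Since $N$ is nilpotent it is the internal direct product $N = P_1 \times \dots \times P_k$ of its Sylow subgroups, and each $P_i$, being characteristic in $N$, is normal in $H$. Because $N$ is a normal Hall subgroup, the Schur--Zassenhaus Theorem provides a complement $K$ with $H = N \rtimes K$ and $K \cong H/N$ abelian; moreover $K$ stabilizes each $P_i$, so conjugation equips every $P_i$ with a $K$-action. I would then set
$$ G = (P_1 \rtimes K) \times \dots \times (P_k \rtimes K), $$
each factor using the restricted action of $K$ on $P_i$. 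Each factor is Sylow-by-abelian (normal Sylow subgroup $P_i$, abelian complement $K$), and the diagonal copies of $P_1, \dots, P_k$ generate a subgroup of order $|N|$ and index $|K|^k$; as $\gcd(|N|,|K|)=1$ this is a Hall subgroup of $G$. Thus Theorem~\ref{theo:main} yields (ZC1) for $G$.

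Next I would embed $H$ into $G$. The subgroup $\prod_{j\neq i} P_j$ is characteristic in $N$, hence normal in $H$, with $H/\prod_{j\neq i} P_j \cong P_i \rtimes K$; taking the product of these $k$ quotient maps defines a homomorphism $\phi \colon H \to G$. Concretely, writing $h = n\kappa$ with $n = n_1 \cdots n_k$, $n_i \in P_i$ and $\kappa \in K$, one has $\phi(h) = ((n_1,\kappa), \dots, (n_k,\kappa))$, the diagonal on the $K$-coordinate; a short computation using ${}^{\kappa}n = \prod_i {}^{\kappa}n_i$ (legitimate since $K$ preserves each $P_i$) confirms that $\phi$ is a homomorphism. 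It is injective because the kernels $\prod_{j\neq i}P_j$ of the factor maps intersect trivially. This exhibits $H$ as a subgroup of $G$, proving the first assertion.

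Finally, for (KP), let $u \in \V(\ZZ H)$ be a torsion unit and regard it inside $\V(\ZZ G)$ via $\phi$. By (ZC1) for $G$ together with \ref{knownfacts}.\eqref{MRSW}, $u$ is rationally conjugate in the units of $\QQ G$ to some $g \in G$; in particular $u$ and $g$ share all partial augmentations, so $\varepsilon_g(u) = \varepsilon_g(g) = 1$. Since $u$ is supported on $H$, for any $x \in G$ one has $\varepsilon_x(u) = \sum_{y \in x^G \cap H} u_y$, which vanishes whenever $x^G \cap H = \emptyset$; as $\varepsilon_g(u) = 1 \neq 0$, this forces $g^G \cap H \neq \emptyset$. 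Replacing $g$ by a $G$-conjugate lying in $H$ leaves the rational conjugacy intact, so $u$ is conjugate in the units of $\QQ G$ to an element of $H$; this is exactly (KP) for $H$ with the overgroup $G$.

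I expect the only genuinely delicate point to be the verification that $\phi$ is a well-defined homomorphism compatible with the several copies of the $K$-action, as opposed to injectivity and the Hall condition, which are immediate; and, for (KP), the small but essential observation that the target group element can be pulled back into $H$ by the support argument on partial augmentations.
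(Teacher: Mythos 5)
Your proposal is correct and follows essentially the same route as the paper: the paper takes $G = N \rtimes \bigl(\prod_{j=1}^k A\bigr)$ with the $j$th copy of $A \cong H/N$ acting only on $P_j$, which is exactly your $\prod_j (P_j \rtimes K)$, embeds $H$ by the same diagonal map $na \mapsto (n,a,\dots,a)$, applies Theorem~\ref{theo:main}, and deduces (KP) by the same observation that $\varepsilon_g(u)\neq 0$ forces $g^G \cap H \neq \emptyset$. The only differences are cosmetic (semidirect-product versus direct-product notation for the same group).
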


\begin{proof} Let $A \simeq H/N$ be a complement of $N$ in $H$ and $N = \prod_{j=1}^k P_j$ the decomposition of $N$ as direct product of its Sylow subgroups. Then set $G = N \rtimes \left(\prod_{j=1}^k A\right)$, where the $j$th factor of $\prod_{j=1}^k A$ acts on $P_j$ like the complement $A$ in $H$ and trivial on all other Sylow subgroups of $N$. Then \[ H = NA \hookrightarrow G =N \rtimes \left(\prod_{j=1}^k A\right) \colon na \mapsto (n, a, ..., a) \] is an embedding of $H$ into $G$. Note that the first Zassenhaus conjecture holds for $G$ by Theorem~\ref{theo:main}.  

It remains to prove that (KP) has a positive answer for $H$. As $H$ can be embedded into the group $G$ for which (ZC1) holds, we have that each torsion element $u \in \V(\ZZ H)$ is conjugate within $\QQ G$ to an element $g \in G$. This means $\varepsilon_g(u) \not= 0$. But as then necessarily $g^G \cap H \not= \emptyset$, $u$ is also conjugate to an element of $H$ within $\QQ G$. \end{proof}

\begin{remark}\label{not_subgroup_closed} The groups constructed by
  Eisele and Margolis in \cite{EisMar} as counterexamples to (ZC1)
  have normal abelian Hall subgroup with abelian complement, so
  Corollary \ref{ZassenhausLargerGroup} shows that these groups can be
  embedded as normal subgroups in a group for which (ZC1) holds. This shows that the property (ZC1) is
  not closed under taking subgroups, not even under taking normal
  subgroups. In contrast to this, (KP) is clearly a subgroup closed
  property. As (ZC1) holds 
for abelian groups, (ZC1) can also not be an extension closed property. \end{remark}

\begin{proposition} \label{normalSylowp} Suppose that the finite group
  $G$ has a normal Sylow $p$-subgroup
        $P$ and that (Gen-BP) holds for $G/P$. Then (Gen-BP)
        holds for $G$. \\ 
Moreover
 if $G$ has a normal Hall
         subgroup $N$ which is a Sylow tower subgroup and (Gen-BP) holds for $G/N$ then it also holds for $G$.
 
\end{proposition}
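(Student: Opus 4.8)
The plan is to treat the two claims in turn, obtaining the second from the first by induction on the number of primes dividing $|N|$. For the first claim, let $u \in \V(\ZZ G)$ have order $n$ and write $n = p^a n'$ with $p \nmid n'$; let bars denote the image under $\ZZ G \to \ZZ \bar G$ with $\bar G = G/P$. The first step is to control which element orders can contribute a nonzero partial augmentation. Since $P$ is the \emph{normal} Sylow $p$-subgroup it is the unique one, so every $p$-element of $G$ lies in $P$. In particular the $p$-part $u_p$ of $u$ maps to a $p$-element of $\V(\ZZ \bar G)$, which is trivial as $\bar G$ is a $p'$-group (the order of a torsion unit divides the exponent of the group, see~\ref{knownfacts}). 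Hence $u_p$ maps to $1$, and Theorem~\ref{HertweckZ_p} gives an element $x \in P$ of order $p^a$ with $u_p$ conjugate to $x$ in the units of $\ZZ_p G$. Regarding $u$ as a unit of $\ZZ_p G$ and invoking Proposition~\ref{HertweckPaugmentation}, I obtain $\varepsilon_g(u) = 0$ for every $g$ whose $p$-part is not conjugate to $x$; as $o(x) = p^a$ this forces $\varepsilon_g(u) = 0$ unless the $p$-part of $o(g)$ equals $p^a$, and therefore $\varepsilon_{G[m]}(u) = 0$ whenever the $p$-part of $m$ differs from $p^a$.

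The second step relates the generalized traces across the quotient. Because $p$-elements lie in $P$, the image of any $g \in G$ has order equal to the $p'$-part of $o(g)$; summing \eqref{ProjectionPA} over all classes mapping to a fixed order $m'$ coprime to $p$ yields
\begin{equation*}
\varepsilon_{\bar G[m']}(\bar u) = \sum_{b \ge 0} \varepsilon_{G[p^b m']}(u).
\end{equation*}
A computation of the same flavour as above (using~\ref{knownfacts} and~\ref{knownfacts}.\eqref{u_p_element}) shows $o(\bar u) = n'$, so the hypothesis that (Gen-BP) holds for $\bar G$ gives $\varepsilon_{\bar G[m']}(\bar u) = 0$ for every $m' \ne n'$. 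Combining with the first step, every summand on the right with $b \ne a$ already vanishes, so only $b = a$ can survive and we get $\varepsilon_{G[p^a m']}(u) = 0$ for all $m' \ne n'$. Together with the vanishing of $\varepsilon_{G[m]}(u)$ for every $m$ whose $p$-part is not $p^a$, this shows $\varepsilon_{G[m]}(u) = 0$ for all $m \ne p^a n' = n$, which is (Gen-BP) for $G$.

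For the second claim I would fix an ordering $p_1, \dots, p_r$ of the primes dividing $|N|$ realizing a Sylow tower of $N$, and let $P_1$ be the normal Sylow $p_1$-subgroup of $N$. Being characteristic in $N \trianglelefteq G$, it is normal in $G$, and since $N$ is a Hall subgroup we have $p_1 \nmid |G/N|$, so $P_1$ is in fact the normal Sylow $p_1$-subgroup of $G$. Passing to $G/P_1$, the subgroup $N/P_1$ is again a normal Hall subgroup, it carries the shortened Sylow tower for $p_2, \dots, p_r$, and $(G/P_1)/(N/P_1) \cong G/N$ satisfies (Gen-BP); by induction (Gen-BP) holds for $G/P_1$, and the first claim applied to $P_1$ then gives (Gen-BP) for $G$. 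The base case $r = 0$ is the hypothesis on $G = G/N$.

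I expect the delicate point to be the bookkeeping in the second step: one must verify that the image of an order-$p^b m'$ element has order exactly $m'$ and that distinct values of $b$ index disjoint families of classes, so that once the Hertweck-type vanishing of the first step is imposed the sum over $b$ genuinely collapses to its single term $b = a$. This decoupling rests entirely on $P$ being the \emph{full} normal Sylow $p$-subgroup, so that no $p$-element can escape $P$, and it is precisely this that lets the quotient information and the local information combine to isolate the order $n$.
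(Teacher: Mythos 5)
Your proof is correct and follows essentially the same route as the paper's: Theorem~\ref{HertweckZ_p} plus Proposition~\ref{HertweckPaugmentation} to force the $p$-part of any class with nonzero partial augmentation to have order exactly $p^a$, then the identity $\varepsilon_{G[p^a m']}(u)=\varepsilon_{\bar G[m']}(\bar u)$ coming from \eqref{ProjectionPA}, and finally induction along the Sylow tower of the Hall subgroup. The only difference is that you spell out the bookkeeping (in particular $o(\bar u)=n'$ and the collapse of the sum over $b$) that the paper leaves implicit.
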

     
\begin{proof} Let $u \in \V(\ZZ G)$ be a torsion element. If $u$ is a $p$-element
          then $u$ is conjugate in $\ZZ_p G$ to an element of
          $G$ by Theorem~\ref{HertweckZ_p}. Thus there is precisely
          one partial augmentation of $u$ which is not zero. 

Assume $o(u) = p^m \cdot a$ with $a > 1$ and $p$ does not divide $a$.
Denote by $u_p $ the $p$-part of $u$. Then $u_p$ maps to the
identity under the quotient map $\ZZ G \rightarrow \ZZ G/P$. Again by 
Theorem~\ref{HertweckZ_p}, $u_p$ is conjugate within $\ZZ_p G$ to $k
\in G$. Now by Proposition~\ref{HertweckPaugmentation} each partial
augmentation $\varepsilon_h(u) = 0$ if the $p$-part $h_p$ is not
conjugate to $k$. Thus $\varepsilon_g(u) \neq 0$ implies that $g$ has
order $p^m \cdot b$ where $b$ divides $a$. 

Let $\sigma $ be the
reduction map from $G$ onto $G/P$ and denote by $\bar{u}$ the image
of $u$ under the induced map $\ZZ G$ onto $\ZZ G/P$.
Clearly, for any $g\in G$ we have that $\sigma (g)$ has order $b$ if and only if $g$ has order $p^k \cdot b$ for some integer $k$. Thus
$$ \varepsilon_{G[p^m \cdot b]} (u) = \varepsilon_{\bar{G}[b]} (\bar{u}).$$
By assumption, the right hand side is zero if and only if $b \neq a$.
Thus (Gen-BP) follows for $G$.

The second statement follows immediately from the first one by
induction on the number of primes dividing $|N |.$           
\end{proof}

\begin{theorem} \label{prop:corollary_sylowtower}
(Gen-BP) holds for finite groups with a Sylow tower. In particular 
it is valid for supersolvable groups. 
\end{theorem}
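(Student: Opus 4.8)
The plan is to obtain the statement as a straightforward induction built on Proposition~\ref{normalSylowp}, whose arithmetic content already does all the work. Recall that $G$ having a Sylow tower means there is a series $1 = G_0 \triangleleft G_1 \triangleleft \cdots \triangleleft G_n = G$ of normal subgroups of $G$ in which each factor $G_i/G_{i-1}$ is isomorphic to a Sylow subgroup of $G$. The structural observation I would isolate first is that the bottom term $G_1$ is then a normal $p$-subgroup of $G$ (for some prime $p$) whose order equals that of a Sylow $p$-subgroup; being normal and of full Sylow order, it is the normal Sylow $p$-subgroup $P$ of $G$.

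I would argue by induction on the number $n$ of distinct primes dividing $|G|$. For $n = 0$ the group is trivial and (Gen-BP) holds vacuously. For the inductive step, let $n \geq 1$, put $P = G_1$ as above, and pass to $G/P$. The images $1 = G_1/P \triangleleft G_2/P \triangleleft \cdots \triangleleft G_n/P = G/P$ form a series of normal subgroups of $G/P$ whose successive factors are isomorphic to $G_i/G_{i-1}$; since these are $q$-groups for primes $q \neq p$, they are Sylow subgroups of $G/P$, so $G/P$ again has a Sylow tower, now with $n - 1$ prime divisors. By the induction hypothesis (Gen-BP) holds for $G/P$, and since $P$ is a normal Sylow $p$-subgroup of $G$, the first assertion of Proposition~\ref{normalSylowp} yields (Gen-BP) for $G$.

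For the final clause I would invoke the classical fact that every supersolvable group possesses a Sylow tower (one may even order it so that the Sylow subgroup of the largest prime divisor is normal), whence (Gen-BP) for supersolvable groups is immediate from the first part of the theorem.

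There is no genuine obstacle here: the only point deserving care is checking that the quotient $G/P$ inherits the Sylow tower so that the induction runs, which is a routine verification on the defining normal series. All the substantive work—controlling the partial augmentations of a torsion unit via its $p$-part and the reduction modulo $P$—is already encapsulated in Proposition~\ref{normalSylowp}.
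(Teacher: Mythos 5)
Your proof is correct and follows essentially the same route as the paper: the paper derives the theorem by applying the second clause of Proposition~\ref{normalSylowp} (with $N = G$), whose own proof is exactly the induction on the number of prime divisors that you spell out explicitly via the first clause. The one point you rightly flag — that the bottom term of the tower is a normal Sylow subgroup and that the quotient inherits a Sylow tower — is the same routine verification implicit in the paper's argument.
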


\begin{proof} 
  Supersolvable groups are Sylow tower groups \cite[VI,
    Satz~9.1]{Huppert67}. Thus the result follows from Proposition
    \ref{normalSylowp}. 
 \end{proof}

\begin{remark} \begin{itemize}
\item[a)] It also follows from Proposition \ref{normalSylowp}
  that (Gen-BP) holds for $G$ provided $G$ has a normal nilpotent Hall subgroup with abelian
  complement. As (Gen-BP) is equivalent to (KP) (see \cite{MdR}), this
  gives for such a group a proof for (KP) which of course also follows for such groups from
  Corollary \ref{ZassenhausLargerGroup}. Note however that (ZC1) for
  the larger group need not hold if (KP) is valid. 
  \item[b)]
  Further examples of Sylow tower groups $G$ are finite groups having
 a nilpotent normal subgroup $N$ such that $G/N$ is a $p$-group. Vice
 versa, groups $G$ with a normal $p$-subgroup $P$ and nilpotent
 quotient $G/P$ are Sylow tower groups.
 From Burnside's Transfer Theorem \cite[IV, Satz~2.7]{Huppert67} it follows that finite solvable groups all of whose Sylow subgroups are abelian
with different invariants have a Sylow tower.
\end{itemize}
\end{remark}

\section{Frobenius Groups}\label{sec:Frobenius}

A finite group $G$ is called a \emph{Camina group} if $G\ne G'$ and $gG' = g^G$ for all $g \in G \setminus G'$. 
Camina groups found a lot of attention since they were introduced by A.R.~Camina in 1978. 
All Camina groups were described by R.~Dark and C.M.~Scoppola (the capstone can be found in \cite{DarkScoppola, Lewis14} and for the last gap that was closed see \cite{isaacslewis}). 
We collect this classification as follows. 

\begin{theorem}\label{classificationCamina}
A finite non-abelian group is a Camina group if and only if it is either a Camina $p$-group (which is necessarily of nilpotency class at most $3$) or a Frobenius group whose complement is cyclic or isomorphic to $Q_8$.      
\end{theorem}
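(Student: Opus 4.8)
Since this statement merely collects a classification assembled over several decades, the plan is not to prove it \emph{ab initio} but to organize the two implications and to indicate where the essential difficulty sits. The sufficiency direction is the routine one: a Camina $p$-group satisfies the defining condition by definition, while for a Frobenius group $G = K \rtimes H$ with kernel $K$ and cyclic or $Q_8$ complement $H$ I would first identify $G'$ as the preimage in $G$ of $H'$, so that $G' = K$ when $H$ is cyclic and $G'$ is the extension of $K$ by $Z(Q_8)$ when $H \cong Q_8$. Then I would verify $gG' = g^G$ for each $g \notin G'$ directly, using the standard facts that two non-trivial complement elements are $G$-conjugate precisely when they are $H$-conjugate and that conjugation by $K$ is transitive on each coset $hK$ (equivalently, that $k \mapsto k^{-1}k^{h}$ is a bijection of $K$ for a fixed-point-free $h$).

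For the necessity direction I would argue in three stages. First, establish the dichotomy that a non-abelian Camina group is either a $p$-group or a Frobenius group. The driving observation is that for $g \notin G'$ the Camina condition forces $g^G = gG'$ and hence $|\C_G(g)| = |G:G'|$; equivalently, every $\chi \in \Irr(G)$ with $G' \not\subseteq \Ker\chi$ vanishes on $G \setminus G'$. Combining this centralizer/vanishing information with the elementary arithmetic of the pair $(G,G')$ yields the split into the two families. Second, in the Frobenius case I would reduce the Camina condition on $G$ to a condition on the complement $H$ and its fixed-point-free action, and then feed this into the Zassenhaus structure theory of Frobenius complements (cyclic Sylow subgroups at odd primes, cyclic or generalized quaternion Sylow $2$-subgroups, together with the explicit list of admissible complements, including the non-solvable ones built from $\SL(2,5)$); the Camina restriction then eliminates every possibility except the cyclic complements and $Q_8$.

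The third stage, and the genuine obstacle, is the $p$-group case: bounding the nilpotency class of a Camina $p$-group by $3$. This is the deep content of the classification, and a self-contained argument would require delicate control of the lower central series through commutator collection and the associated Lie-ring techniques. It is precisely this bound that was pinned down only through the cumulative efforts of Dark--Scoppola and Lewis, with the final gap closed by Isaacs--Lewis; accordingly, my plan here is to invoke those results rather than to reconstruct them, and the overall argument is the assembly of these inputs into the single statement quoted above.
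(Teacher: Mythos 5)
The paper gives no proof of this theorem at all: it explicitly presents the statement as a collection of the known classification, citing Dark--Scoppola, Lewis and Isaacs--Lewis, which is exactly what your proposal does for the hard direction (the dichotomy and the class-$\leq 3$ bound for Camina $p$-groups). Your additional sketch of the sufficiency direction (identifying $G'=KH'$ and using transitivity of $K$-conjugation on the cosets $hK$ together with $H$-conjugacy in the complement) is correct, so your treatment matches, and indeed slightly exceeds, the paper's.
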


In order to study the first Zassenhaus conjecture for the direct product of a Camina group and an finite abelian group, we first verify the first Zassenhaus conjecture for Camina groups.

\begin{proposition}\label{CaminaGroups}
        The first Zassenhaus conjecture holds for Camina groups.
\end{proposition}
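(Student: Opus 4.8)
By Theorem~\ref{classificationCamina} a Camina group is either abelian (for which (ZC1) is classical, every torsion unit being trivial), a Camina $p$-group, or a Frobenius group whose complement is cyclic or isomorphic to $Q_8$. A Camina $p$-group is nilpotent, so there (ZC1) is Weiss' theorem \cite{Weiss91}. Thus the real content is the Frobenius case, and the plan is to treat it by the partial‑augmentation technique used for Theorem~\ref{theo:main}, adapted to the semidirect situation. Write $G = N \rtimes H$ with Frobenius kernel $N$ and complement $H$, where $H$ is cyclic or $H \cong Q_8$. Then $N$ is nilpotent, $\gcd(|N|,|H|)=1$, and for $\pi = \pi(N)$ each Sylow $p$-subgroup $N_p$ ($p \in \pi$) is characteristic in $N$, hence a normal Sylow $p$-subgroup $G_p = N_p$ of $G$.

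For a fixed torsion unit $u \in \V(\ZZ G)$ I would first run the $\pi$-part analysis from the proof of Theorem~\ref{theo:main}: for each $p \in \pi$ the $p$-part $u_p$ maps to $1$ modulo $N_p$ (as $G/N_p$ is a $p'$-group), so by Theorem~\ref{HertweckZ_p} it is conjugate in $\ZZ_p G$ to some $x_p \in N_p$, and Proposition~\ref{HertweckPaugmentation} then forces $\varepsilon_g(u)=0$ whenever $g_p$ is not conjugate to $x_p$. Collecting these over $p \in \pi$ yields $\varepsilon_g(u)=0$ unless $g_\pi \sim x$, where $x = \prod_{p\in\pi} x_p \in N$. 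The decisive extra feature of a Frobenius group is that it has no elements of mixed order: every $g \in G$ lies in $N$ (and is then a $\pi$-element) or is conjugate to a nontrivial element of $H$ (and is then a $\pi'$-element).

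I would then split according to whether $x=1$. If $x \neq 1$, some $x_{p_0}\neq 1$, so any $g$ with $\varepsilon_g(u)\neq 0$ has nontrivial $p_0$-part; by the no-mixed-order property such a $g$ is a $\pi$-element and hence lies in $N$, so the surviving classes are exactly those inside the normal subgroup $N$ compatible with $g_p \sim x_p$ for all $p$. If $x=1$, then $\varepsilon_g(u)\neq 0$ forces $g_\pi=1$, i.e.\ $g$ is conjugate to a complement element; passing to $\bar G = G/N \cong H$ via \eqref{ProjectionPA} and using that a Frobenius complement controls its own fusion (distinct nontrivial complement classes stay distinct and never fuse with kernel classes, because $H \cap H^n = 1$ for $1 \neq n \in N$), the nonzero partial augmentations of $u$ are in bijection with, and equal to, those of $\bar u \in \V(\ZZ \bar G)$. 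Since $\bar G = H$ is abelian (cyclic case) or nilpotent ($Q_8$ case, by \cite{Weiss91}), (ZC1) holds for $\bar G$, so $\bar u$ has a single nonzero partial augmentation; transporting back, so does $u$.

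In the cyclic case both sub-cases are clean: the quotient $\bar G = H$ is abelian, so $\bar u$ is trivial, and Lemma~\ref{lemma:pi-conj-classes} (applicable since $N$ is a normal nilpotent Hall $\pi$-subgroup with abelian complement) shows that, for each fixed image in $\bar G$, the elements $g$ with $g_\pi \sim x$ form a single conjugacy class, so exactly one partial augmentation of $u$ survives, precisely as in the proof of Theorem~\ref{theo:main}. The one genuinely new obstacle is the case $H \cong Q_8$ together with $x \neq 1$: here the complement is non-abelian, so Lemma~\ref{lemma:pi-conj-classes} is unavailable, and I must still show that the $N$-elements $g$ with $g_p \sim_G x_p$ for all $p \in \pi$ form a single $G$-class. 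I expect this to be the main difficulty, since matching the individual $p$-parts up to $G$-conjugacy need not a priori match the full $\pi$-element $g$ with $x$; the plan is to upgrade the per-prime conjugacies of Theorem~\ref{HertweckZ_p} to a conjugacy of the whole $\pi$-part $u_\pi$ (which maps to $1$ modulo $N$) to $x$ inside $\ZZ_{(\pi)} G$, exploiting that the $u_p$ are the pairwise-commuting primary components of $u_\pi$ and that $H$ acts fixed-point-freely on $N$, thereby pinning the surviving class to $x^G$. Finally, since the entire argument applies verbatim to every power $u^d$ with $d \mid o(u)$, each such power has a single, necessarily positive, partial augmentation, and \ref{knownfacts}.\eqref{MRSW} yields that $u$ is rationally conjugate to an element of $G$, establishing (ZC1).
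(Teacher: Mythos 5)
Your reduction via Theorem~\ref{classificationCamina} to the Frobenius case, and the disposal of Camina $p$-groups by \cite{Weiss91}, match the paper; but your treatment of the Frobenius case has a genuine gap, which you partly acknowledge but mislocate. What Theorem~\ref{HertweckZ_p} and Proposition~\ref{HertweckPaugmentation} give you, prime by prime, is that $\varepsilon_g(u)=0$ unless $g_p\sim_G x_p$ for \emph{each} $p\in\pi$ separately; this is strictly weaker than $g_\pi\sim_G x$. In the direct-product setting of Theorem~\ref{theo:main} the passage between the two is unproblematic because conjugation acts componentwise and the conjugators for different primes can be combined; in a Frobenius group it is not. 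Already for a cyclic complement the implication fails at the level of group elements: in $G=C_{21}\rtimes C_2$ (the involution inverting the kernel, a Camina group) the elements $x_3x_7$ and $x_3^{-1}x_7$ have conjugate $3$-parts and conjugate $7$-parts but are not conjugate. So in your branch $x\neq 1$ the surviving classes are a priori several, and neither Lemma~\ref{lemma:pi-conj-classes} (whose hypothesis is the single condition $g_\pi\sim x$, not the per-prime one) nor fixed-point-freeness closes the gap. The difficulty is therefore not specific to $Q_8$: it is the upgrade from conjugacy of the individual $u_p$ to conjugacy of the whole $\pi$-part $u_\pi$, equivalently the statement that a torsion unit mapping to $1$ modulo the nilpotent Frobenius kernel is rationally conjugate to an element of $N$. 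That is precisely Hertweck's theorem \cite{Hertweck12}, a substantial result which your closing ``plan'' would in effect have to reprove; it does not follow from the tools quoted in Section~\ref{sectionHeLPMethod}.

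The paper's own proof is a short reduction that avoids all of this: by \cite[Theorem~2.1]{JuriaansPolcino00} the order of $u$ divides $|N|$ or $|C|$; in the first case $u$ maps to $1$ modulo $N$ and \cite{Hertweck12} applies, in the second case $o(u)$ is coprime to $|N|$ and \cite[Corollary~2.3]{DoJu} applies. Your $x=1$ branch is essentially sound and recovers the coprime case, but the $x\neq 1$ branch needs \cite{Hertweck12} or an equivalent; note also that without the order theorem of \cite{JuriaansPolcino00} you never rule out in advance units whose order mixes primes of $|N|$ and $|C|$, which your argument only excludes a posteriori and only once the missing single-class claim is established.
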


\begin{proof}
        Let $G$ be a Camina group. 
        As the first Zassenhaus conjecture holds for abelian groups, we may assume that $G$ is non-abelian.
        We use the classification of Camina groups stated in Theorem \ref{classificationCamina}. 
        If $G$ is a Camina $p$-group, then in particular it is nilpotent and hence the result follows from \cite{Weiss91}.  
        Suppose that  $G$ is a Frobenius group whose complement is cyclic or isomorphic to $Q_8$ and 
        let $u \in \V(\ZZ G)$ of order $k$.  
        By \cite[Theorem 2.1]{JuriaansPolcino00} we know that $k$ divides either the order of the Frobenius kernel $N$ of $G$ or the order of the Frobenius complement of $G$. 
        In the first case, $u$ gets mapped to $1$ under the natural ring homomorphism induced by modding out $N$ and the result follows from  \cite{Hertweck12}. 
        Otherwise, $k$ is relatively prime to the order of $N$ and the result follows from \cite[Corollary 2.3]{DoJu}.
\end{proof}

\begin{proposition}\label{FrobeniusXAbeliano}
        Let $A$ be any finite abelian group and $F$ a Frobenius group with Frobenius complement $C$. If the first Zassenhaus conjecture holds for $C\times A$ then it also holds for $F \times A$. 
\end{proposition}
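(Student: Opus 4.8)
The plan is to reduce to a situation close to Theorem~\ref{theo:main}, where the complement is no longer abelian but (ZC1) is available for it by hypothesis. Write $F=N\rtimes C$ with Frobenius kernel $N$ and let $\pi=\pi(N)$ be the set of primes dividing $|N|$; recall that $\gcd(|N|,|C|)=1$, that $N$ is nilpotent, and that $N$ is precisely the set of $\pi$-elements of $F$ while every element outside $N$ is conjugate into $C$ and hence is a $\pi'$-element. Since $A$ is abelian, decompose $A=A_\pi\times A_{\pi'}$ into its Hall $\pi$- and $\pi'$-subgroups. Then $M:=N\times A_\pi$ is a normal nilpotent Hall $\pi$-subgroup of $F\times A$ with complement $L:=C\times A_{\pi'}$, and $(F\times A)/M\cong L$. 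Moreover $C\times A_{\pi'}$ is a direct factor of $C\times A$, so by Remark~\ref{rem:ZC_for_direct_factors} the first Zassenhaus conjecture holds for $L$. I also record that $A$ is central in $F\times A$, so two elements $(f,a)$ and $(h,b)$ are conjugate in $F\times A$ if and only if $a=b$ and $f\sim_F h$.

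First I would transfer the first part of the proof of Theorem~\ref{theo:main} verbatim. Let $u\in\V(\ZZ(F\times A))$ be a torsion unit and let bars denote reduction modulo $M$. For each $p\in\pi$ the Sylow $p$-subgroup of $M$ is the (normal) Sylow $p$-subgroup of $F\times A$, so $u_p$ maps to $1$ modulo it and, by Theorem~\ref{HertweckZ_p}, is conjugate in $\ZZ_p(F\times A)$ to some $x_p$ in it. Proposition~\ref{HertweckPaugmentation} then forces $\varepsilon_g(u)=0$ whenever the $p$-part of $g$ is not conjugate to $x_p$. Since $M$ is nilpotent, running over all $p\in\pi$ gives an element $x=(x_N,x_{A_\pi})\in M$ with $x_N\in N$ and $x_{A_\pi}\in A_\pi$ such that, using the centrality of $A$, $\varepsilon_{(f,a)}(u)=0$ unless $a_\pi=x_{A_\pi}$ and $f_\pi\sim_F x_N$.

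The heart of the argument is a replacement for Lemma~\ref{lemma:pi-conj-classes}, where abelianness of the complement fails but the Frobenius structure compensates. Because $A$ is central, the element $a$ of a contributing $g=(f,a)$ is pinned down by $a_\pi=x_{A_\pi}$ together with the value $a_{\pi'}$ prescribed by $\bar g$, so everything reduces to counting the $F$-classes of $f$ with $f_\pi\sim_F x_N$ and $fN$ in a prescribed class of $F/N\cong C$. If $x_N=1$ then $f$ is a $\pi'$-element, hence conjugate into $C$, and for $\pi'$-elements $F$-conjugacy is detected by $F/N$-conjugacy; so the class of $fN$ in $F/N$ determines a single $F$-class of $f$. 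If $x_N\neq 1$ then $f_\pi\neq 1$ forces $f$ to be a $\pi$-element, i.e.\ $f\in N$ with $f=f_\pi\sim_F x_N$, again a single $F$-class (now lying over the trivial class of $F/N$). In either case the set $\{\,g\in F\times A : g_\pi\sim x,\ \bar g\sim\bar\ell\,\}$ is a single conjugacy class of $F\times A$ or empty, for each $\ell\in L$. This is the step I expect to be the main obstacle, and it is exactly where the Frobenius hypothesis on $F$ is used.

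To finish, I would combine this with the quotient. Since (ZC1) holds for $L$ and $\bar u\in\V(\ZZ L)$ is torsion, $\bar u$ has a single nonzero partial augmentation, equal to $1$, say at $\bar\ell_0$. By the projection formula~\eqref{ProjectionPA}, $\sum_{g^G,\ \bar g\sim\bar\ell}\varepsilon_g(u)=\varepsilon_{\bar\ell}(\bar u)$ for every $\ell\in L$, where on the left only classes with $g_\pi\sim x$ contribute. By the previous paragraph each such sum runs over at most one class, so $\varepsilon_g(u)=0$ for all $g$ except the single class lying over $\bar\ell_0$, where it equals $1$. The same argument applies verbatim to every power $u^d$, so each $u^d$ has exactly one nonzero partial augmentation, equal to $1$; hence all partial augmentations of $u$ are trivial and, by \ref{knownfacts}.\eqref{MRSW}, $u$ is rationally conjugate to an element of $F\times A$.
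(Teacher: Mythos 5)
There is a genuine gap, and it sits one step \emph{before} the place you flagged as the main obstacle. Applying Theorem~\ref{HertweckZ_p} and Proposition~\ref{HertweckPaugmentation} one prime at a time only yields that $\varepsilon_g(u)=0$ unless $g_p\sim x_p$ for each $p\in\pi$ \emph{separately}; it does not yield $g_\pi\sim x$ for $x=\prod_{p\in\pi}x_p$, since the conjugating elements may differ from prime to prime. In the setting of Theorem~\ref{theo:main} the two conditions coincide because the primes of $\pi$ live in distinct direct factors and conjugacy is componentwise; in a Frobenius group whose kernel is divisible by more than one prime they genuinely differ, because the complement acts on all the $N_p$ simultaneously. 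Concretely, let $F=(C_7\times C_{13})\rtimes C_3$ with $C_3$ acting fixed-point-freely by $x\mapsto x^2$ on $C_7=\langle x\rangle$ and $y\mapsto y^3$ on $C_{13}=\langle y\rangle$. Then $xy$ and $xy^3$ have $F$-conjugate $p$-parts for $p=7$ and $p=13$, yet $(xy)^F=\{xy,\,x^2y^3,\,x^4y^9\}$ does not contain $xy^3$. Your replacement for Lemma~\ref{lemma:pi-conj-classes} is correct as literally stated --- $\{g:g_\pi\sim x,\ \bar g\sim\bar\ell\}$ is a single class or empty --- but the set of classes whose partial augmentations are not yet known to vanish is the larger one cut out by the prime-by-prime conditions, and in the example it is a union of several classes all lying over the identity of $F/N$. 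The projection formula then only gives the \emph{sum} of their partial augmentations (equal to $\varepsilon_{\bar\ell_0}(\bar u)\in\{0,1\}$), from which you can neither isolate the individual values nor even deduce that they are non-negative; so the hypotheses of \ref{knownfacts}.\eqref{MRSW} are not established.

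The paper avoids this entirely by not attempting to pin down a single class. It proves only that all partial augmentations of $u$ and its powers are non-negative (sufficient by \ref{knownfacts}.\eqref{MRSW}), arguing by induction on $o(u)$ and $|G|$: if some prime $p\mid|N|$ does not divide $o(u)$, project modulo $N_p$ and use induction on $|G|$; if some $n_p=1$, replace $u$ by $u a_p^{-1}$ with $a_p$ central and use induction on the order; otherwise the Frobenius condition forces the $q$-part of every contributing class into $N\times A$ for \emph{every} prime $q$, and one then projects modulo a single Sylow subgroup $A_r$ of $A$, where centrality of $A$ collapses the fibre of the projection formula to one class and the inductive hypothesis on $G/A_r$ supplies non-negativity. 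Some device of this kind is needed to repair your argument: the single-class statement you would require for $\pi$-parts is simply false once $|N|$ has two prime divisors.
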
 

\begin{proof}
        Using Remark~\ref{rem:ZC_for_direct_factors} and the fact that the first Zassenhaus conjecture holds for $C\times A$ by assumption, we deduce that it also holds for $C$. We claim that the first Zassenhaus conjecture holds for the Frobenius group $F = N \rtimes C$. 
        Indeed, by \cite[Theorem~2.1]{JuriaansPolcino00} the order of an element of $\V(\ZZ F)$ is either a divisor of the order of $N$ or a divisor of the order of $C$. 
        In the first case, the unit maps to the identity under the natural homomorphism $\ZZ F \to \ZZ F/N$ and hence it is rationally conjugate to an element of $F$ by \cite[Theorem]{Hertweck12}.  
        In the second case, it is conjugate to a unit of $\ZZ C$ by a unit of $\QQ F$ by \cite[Theorem~(37.17)]{SehgalBook} and hence eventually rationally conjugate to an element of $C$ by Remark~\ref{rem:ZC_for_direct_factors} (see also the discussion at the end of the introduction in \cite{Hertweck12}). 
        This finishes the proof of the claim. We will use it without further mention. 

        Let $G=\left(N\rtimes C\right) \times A$ and $u\in \V(\ZZ G)$ be a torsion element. 
        We will prove that all partial augmentations of $u$ are non-negative and after obtaining that, the result will follow by \ref{knownfacts}.(\ref{MRSW}). 
        We argue by induction on $o(u)$ and on $|G|$. Note that the
        Frobenius kernel $N$ is nilpotent by a famous result of J.~Thompson, so it has a normal Sylow subgroup for each prime divisor $p$.
        
        Clearly, for every prime $p\mid |N|$, the first Zassenhaus conjecture holds for $G/N_p$ by induction on $|G|$.
        Therefore, if there is a prime $p\mid |N|$ such that $p\nmid
        o(u)$ we can use equation \eqref{ProjectionPA} on page~\pageref{ProjectionPA} with normal
        subgroup $N_p$ and \ref{knownfacts}.(\ref{HertweckOrder}) to obtain for every $x\in G$ that $\varepsilon_{\bar{x}}(\bar{u})=\sum_{g^G,\bar{g}\sim \bar{x}}\varepsilon_{g}(u)=\varepsilon_{x}(u)\geq 0$, as desired.      
        
        Thus we may assume that every prime dividing $|N|$ also divides $o(u)$.  
        Moreover, for every prime $p\mid |N|$ we have that $p\nmid |C|$ because $N\rtimes C$ is Frobenius and using Theorem \ref{HertweckZ_p} with $N_p \times A_p$ we obtain $u_p\sim_{\ZZ_pG} n_pa_p\in N_p\times A_p$.
        
        Suppose that $n_p=1$ for some prime $p\mid |N|$. Then $u_p\sim_{\ZZ_p G}a_p$, and as $a_p$ is central in $G$, we get $u_p=a_p\in A_p$. 
        Let $v=u\cdot a_p^{-1}\in \V(\ZZ G)$ be a torsion element. 
        Thus $v_p=u_pa_p^{-1}=1$ and $v_q=u_q$ for every prime $q\ne p$. 
        This implies $o(v)\mid o(u)$ and $p\nmid o(v)$. 
        By induction on $o(u)$, all partial augmentations of $v$ are non-negative, and thus also all of $u$.    
        
        Therefore we may assume that $n_p\ne 1$ for every prime $p\mid |N|$.  
        Suppose that $\varepsilon_g(u)\ne 0$ for some $g\in G$. We claim that $g_q\in N\times A$ for every prime $q$.  
        Using Proposition \ref{HertweckPaugmentation} we deduce for every prime $p\mid |N|$ that $g_p\sim_G n_pa_p$. 
        This implies that $g_p\in N\times A$ for every prime $p\mid |N|$. 
        Let $q\mid |C\times A|$ be a prime.      
        If $q\nmid |C|$ then clearly $g_q\in N\times A$.        
        Suppose now that $q\mid |C|$. 
        Write $g_q=h_qa_q$ with $h_q\in N\rtimes C$ and $a_q\in A$. 
        As $g_p\sim_G n_pa_p$ and $a_p$ is central in $G$ for any prime $p\mid |N|$, there is $f\in N\rtimes C$ such that $g_p = f^{-1}n_pfa_p$. 
        Let $b_p = f^{-1}n_pf\in N\rtimes C$. 
        If $h_q\ne 1$ then $g_pg_q=b_pa_ph_qa_q=b_ph_qa_pa_q$ where $1\ne b_ph_q\in N\rtimes C$ and $pq\mid o(b_ph_q)$ because 
        $b_ph_q = b_pa_ph_qa_qa_p^{-1}a_q^{-1}=g_pg_qa_p^{-1}a_q^{-1}=g_qg_pa_p^{-1}a_q^{-1}=h_qb_p$, contradicting the fact that $N\rtimes C$ is Frobenius.
        Therefore,  if $q\mid |C|$ then $h_q=1$ and hence $g_q=a_q\in A_q\subseteq N\times A$. 
        This finishes the proof of the claim.
        
        As a consequence of the claim we get that $\varepsilon_g(u)\ne 0$ implies $g_q\sim_G n_qa_q\in N\times A$ for every prime $q$. 
        By induction on $|G|$, we can fix a prime $r$ dividing $|A|$. 
        Write $G=\left(\left(N\rtimes C\right)\times A_{r'}\right)\times A_r$ and let $x\in \left(N\rtimes C\right)\times A_{r'}$. 
        The first Zassenhaus conjecture holds for $G/A_r$ by induction on $|G|$. 
        So using \eqref{ProjectionPA} on page~\pageref{ProjectionPA} with $A_r$ we get 
        $$\varepsilon_x(\bar{u})=\sum_{b\in A_r}\varepsilon_{xb}(u)=\varepsilon_{xa_r}(u)\geq 0,$$
        as desired.
\end{proof}

\begin{corollary}
        The first Zassenhaus conjecture holds for the direct product $G\times A$ where $A$ is any finite abelian group and $G$ is either a Camina group or a Frobenius group whose complement has odd order. 
\end{corollary}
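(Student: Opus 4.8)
The plan is to deduce the corollary from Proposition~\ref{FrobeniusXAbeliano} together with Proposition~\ref{CaminaGroups}, reducing in each of the two cases to an application of the hypothesis of Proposition~\ref{FrobeniusXAbeliano}, namely that (ZC1) holds for $C \times A$ where $C$ is the relevant Frobenius complement. The key observation is that Proposition~\ref{FrobeniusXAbeliano} converts the problem for $F \times A$ into the (a priori easier) problem for $C \times A$, so everything hinges on establishing (ZC1) for the product of a finite abelian group with the particular complements that arise.

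First I would treat the Camina case. If $G$ is a Camina group and is a Camina $p$-group, then $G$ is nilpotent, so $G \times A$ is nilpotent and (ZC1) follows directly from \cite{Weiss91}. Otherwise, by Theorem~\ref{classificationCamina}, $G$ is a Frobenius group whose complement $C$ is cyclic or isomorphic to $Q_8$. In either subcase $C$ is a $Z$-group or a nilpotent group; I would invoke Proposition~\ref{FrobeniusXAbeliano}, which reduces (ZC1) for $G \times A$ to (ZC1) for $C \times A$. When $C$ is cyclic, $C \times A$ is abelian, so (ZC1) is immediate; when $C \cong Q_8$, the group $C \times A$ is nilpotent, and again (ZC1) holds by \cite{Weiss91}. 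Thus Proposition~\ref{FrobeniusXAbeliano} delivers the Camina case.

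For the second case, suppose $G$ is a Frobenius group whose complement $C$ has odd order. By Proposition~\ref{FrobeniusXAbeliano} it again suffices to verify (ZC1) for $C \times A$. The point here is that a Frobenius complement of odd order is metacyclic, indeed cyclic, by the structure theory of Frobenius complements (a Frobenius complement has all its Sylow subgroups cyclic or generalized quaternion, and the only generalized quaternion group of odd order is trivial, forcing all Sylow subgroups to be cyclic and hence $C$ to be metacyclic; one can further reduce to the cyclic-by-abelian situation). Consequently $C \times A$ is cyclic-by-abelian, and (ZC1) holds for such groups by \cite{CMdR}. Feeding this into Proposition~\ref{FrobeniusXAbeliano} yields (ZC1) for $G \times A$.

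The main obstacle is making precise the structural claim that a Frobenius complement of odd order is cyclic-by-abelian (equivalently metacyclic) so that the result of \cite{CMdR} applies to $C \times A$; this is where I would need to be careful, since the general metacyclic statement in the introduction is what covers the Camina ($Q_8$) case only through nilpotency, whereas the odd-order case genuinely needs the cyclic-by-abelian theorem. Everything else is a routine assembly: in each case one identifies the complement, checks that $C \times A$ falls into a class where (ZC1) is already known (abelian, nilpotent, or cyclic-by-abelian), and then appeals to Proposition~\ref{FrobeniusXAbeliano}.
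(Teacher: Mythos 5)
Your proof is correct and follows essentially the same route as the paper: reduce both cases via Proposition~\ref{FrobeniusXAbeliano} to (ZC1) for $C\times A$, handle Camina $p$-groups by nilpotency, cyclic and $Q_8$ complements by abelianness/nilpotency, and odd-order complements via metacyclicity and the cyclic-by-abelian result of \cite{CMdR} (you in fact spell out details the paper leaves implicit). One small caution: your aside that an odd-order Frobenius complement is ``indeed cyclic'' is false --- there are nonabelian metacyclic Frobenius complements of odd order, e.g.\ a $Z$-group of order $63$ --- but this does not affect your argument, which only uses that $C$ is metacyclic and hence $C\times A$ is cyclic-by-abelian.
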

\begin{proof}
        By \cite[V.8.18]{Huppert67} (or by \cite[Theorem 18.1]{Passman68}) we know that odd order Frobenius complements are metacylic. Hence the result follows combining Proposition~\ref{CaminaGroups} and Proposition~\ref{FrobeniusXAbeliano}. 
\end{proof}

\begin{proposition} \label{Hcoprime}
        Let $G = F \times H$ where $F$ is a Frobenius group with Frobenius complement $C$ and $H$ is a finite group of coprime order to $F$.
        Suppose that the first Zassenhaus conjecture holds for $C \times H$. 
        Then it also holds for $G$.
\end{proposition}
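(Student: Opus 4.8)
The plan is to follow the pattern of Proposition~\ref{FrobeniusXAbeliano}, but to exploit the coprimality so that the prime sets split cleanly, and to play off the two quotients $\ZZ G\to\ZZ(G/H)\cong\ZZ F$ and $\ZZ G\to\ZZ(G/N)\cong\ZZ(C\times H)$. Write $F=N\rtimes C$; by Thompson's theorem $N$ is nilpotent, and since $|H|$ is prime to $|F|$ the sets $\pi(N),\pi(C),\pi(H)$ are pairwise disjoint. As in the proof of Proposition~\ref{FrobeniusXAbeliano} one first records that the first Zassenhaus conjecture holds for $F$ itself (via \cite[Theorem~2.1]{JuriaansPolcino00}, \cite[Theorem]{Hertweck12} and \cite[Theorem~(37.17)]{SehgalBook}, together with its validity for $C$, a direct factor of $C\times H$ by Remark~\ref{rem:ZC_for_direct_factors}), and that it holds for $H$, again by Remark~\ref{rem:ZC_for_direct_factors}. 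For a torsion unit $u\in\V(\ZZ G)$ I would argue by induction on $|N|$ (with base case $N=1$, where $G=C\times H$ is covered by hypothesis) and on $o(u)$, the aim being to show that all partial augmentations of $u$ are non-negative and then to invoke \ref{knownfacts}.\eqref{MRSW}; the proper powers of $u$ are handled by the induction on $o(u)$.

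First I would dispose of the case in which some prime $p\mid|N|$ does not divide $o(u)$. Reducing modulo the normal $p$-subgroup $N_p$, the quotient $G/N_p\cong\bigl((N/N_p)\rtimes C\bigr)\times H$ is again of the required shape, since a quotient of a Frobenius group by a normal subgroup of its kernel is Frobenius with the same complement, and the hypothesis on $C\times H$ is untouched; hence the first Zassenhaus conjecture holds for $G/N_p$ by induction on $|N|$. As $p\nmid o(u)$, every $g$ with $\varepsilon_g(u)\neq0$ is a $p'$-element, and $p'$-elements lying in a common coset of the normal $p$-group $N_p$ are conjugate, so the fibre in \eqref{ProjectionPA} is a single class and $\varepsilon_g(u)=\varepsilon_{\bar g}(\bar u)\ge0$. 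This already settles $\gcd(o(u),|N|)=1$, and it is here (through the base case $C\times H$) that the full strength of the hypothesis, rather than merely the validity for $H$, is used.

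There remains the case where every prime dividing $|N|$ divides $o(u)$. For each such $p$, Theorem~\ref{HertweckZ_p} applied to the normal Sylow subgroup $N_p$ gives $u_p\sim_{\ZZ_pG}n_p\in N_p$ with $n_p\neq1$, and Proposition~\ref{HertweckPaugmentation} forces $\varepsilon_g(u)\neq0\Rightarrow g_p\sim_G n_p$. Since $g_p$ is then a non-trivial element of the Frobenius kernel, $\C_G(g_p)=\C_N(g_p)\times H\le N\times H$; as the $q$-part $g_q$ (for any $q\mid|C|$) commutes with $g_p$, it lies in $N\times H$, and being a $\pi(C)$-element of a group of order prime to $q$ it is trivial. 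Together with $g_r\in H$ for $r\mid|H|$ this shows that every $g$ with $\varepsilon_g(u)\neq0$ lies in $N\times H$. Consequently $u$ maps to the identity under $\ZZ G\to\ZZ\bigl(G/(N\times H)\bigr)\cong\ZZ C$, by the Berman--Higman theorem applied in $\ZZ C$.

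It remains to prove non-negativity of the partial augmentations of $u$ on $N\times H$. At this point $N\times H$ is a direct product of the nilpotent group $N$ with the coprime group $H$ for which the first Zassenhaus conjecture holds, so by \cite[Proposition~8.1]{Hertweck08} it holds for $N\times H$ itself. The natural plan is therefore to descend $u$ into $\ZZ(N\times H)$: as $u$ is trivial modulo the normal subgroup $N\times H$ of index $[G:N\times H]=|C|$, one expects $u$ to be conjugate, by a unit of $\QQ G$, to a torsion unit supported on $N\times H$, to which validity for $N\times H$ then applies and yields $\varepsilon_g(u)=\delta_{g\sim(x_N,x_H)}\ge0$. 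I expect this descent to be the main obstacle. The analogous reduction in Proposition~\ref{FrobeniusXAbeliano} is painless because the group peeled off there is a \emph{central} direct factor $A_r$, which leaves the hypothesis intact and automatically isolates a single class; here $H$ may be directly indecomposable with non-normal Sylow subgroups, so no single quotient does this, and one must genuinely use that $o(u)$ is controlled relative to $|C|$ together with the Frobenius structure of $F$ to move $u$ off the complement. The redeeming feature is that once $u$ is brought onto $N\times H$ the complement's twisting is gone, so the nilpotency of $N$ guarantees (as in \cite{Weiss91}) that the per-prime data on the Sylow parts of the $N$-component assemble into a single conjugacy class; no combination ambiguity then survives, and the argument concludes through \ref{knownfacts}.\eqref{MRSW}.
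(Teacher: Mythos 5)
Your reduction to the case where every prime dividing $|N|$ divides $o(u)$, and your localization of the support of $u$ inside $N\times H$ (via Theorem~\ref{HertweckZ_p}, Proposition~\ref{HertweckPaugmentation} and the fact that centralizers of non-trivial kernel elements avoid the complement), match the paper's argument. But the final step is a genuine gap, and you flag it yourself: you want to ``descend'' $u$ to a torsion unit of $\ZZ(N\times H)$ so as to quote the validity of (ZC1) for $N\times H$, and no such descent is available. A normalized torsion unit of $\ZZ G$ whose support lies in a normal subgroup $M$ need not be conjugate to a unit of $\ZZ M$, and even if it were, (ZC1) for $M$ controls partial augmentations with respect to $M$-classes, which are finer than the $G$-classes you are summing over. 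So the argument stops exactly where the remaining work lies.

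The paper closes this case without any descent. Having fixed, for each prime $p\mid |N|$, an element $x_p\in N_p$ with $u_p\sim_{\ZZ_p G}x_p$, Proposition~\ref{HertweckPaugmentation} applied for all these primes shows that every class with non-vanishing partial augmentation is of the form $(xh)^G$ with $x=\prod_p x_p\in N$ \emph{fixed} and $h$ running over class representatives of $H$; in particular the $F$-component of the support is a single $F$-class, not merely contained in $N$. These classes map bijectively onto the classes of $H\cong G/F$, so by \eqref{ProjectionPA} the image $\bar u$ of $u$ under $\ZZ G\to\ZZ(G/F)$ satisfies $\varepsilon_{\bar h}(\bar u)=\varepsilon_{xh}(u)$. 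Since (ZC1) holds for $H$ by Remark~\ref{rem:ZC_for_direct_factors}, exactly one $\varepsilon_{\bar h}(\bar u)$ is non-zero, hence exactly one partial augmentation of $u$ is non-zero, and \ref{knownfacts}.\eqref{MRSW} finishes the proof. (The paper also dispatches your first case in one stroke: if $\gcd(o(u),|N|)=1$ one reduces modulo all of $N$ at once and lifts rational conjugacy from $G/N=C\times H$ via \cite[Theorem~2.7]{DoJu}; your prime-by-prime variant with the Schur--Zassenhaus coset argument is workable but unnecessary.) If you replace the attempted descent by this projection onto $\ZZ(G/F)$, your proof goes through.
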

\begin{proof}
As in the proof of Proposition~\ref{FrobeniusXAbeliano} we can deduce that the first Zassenhaus conjecture also holds for $C$ and $H$ by Remark~\ref{rem:ZC_for_direct_factors}.
Let $u$ be a torsion element of $\V(\ZZ G)$ and denote by $N$ the Frobenius kernel of $F$.

Assume first $\gcd(o(u), |N|) = 1$. Then projecting $u$ via the homomorphism $\ZZ G \rightarrow \ZZ G/N$ and using that the first Zassenhaus conjecture holds for $G/N=C\times H$ by assumption we can conclude that $u$ is rationally conjugate to an element of $C \times H$ by \cite[Theorem~2.7]{DoJu}.

So assume that $\gcd(o(u), |N|) \not= 1$. We will prove that all partial augmentations of $u$ but one vanish and  the result will follow by \ref{knownfacts}.(\ref{MRSW}).
Let $p$ be a common prime divisor of $o(u)$ and $|N|$.
Then $p\nmid |H|$ and by Theorem~\ref{HertweckZ_p} we deduce that $u_p\sim_{\ZZ_p G} x_p\in N_p$. 
Repeating this process for every prime $p$ dividing $o(u)$ and also $|N|$ we may define $x=\prod x_p \in N$ and $\pi =\prod p$. 
Now using Proposition~\ref{HertweckPaugmentation} we have that $\varepsilon_g(u)=0$ for every $g\in G$ with $g_{\pi} \not\sim x$. 
Furthermore, by the assumptions on the order of $G$ and that $F$ is a Frobenius group, we have that 
$\varepsilon_g(u)=0$ for every $g\in G$ with a common prime divisor of $o(g)$ and $|C|$. 
Therefore the distinct non-vanishing partial augmentations of $u$ are of the form $\varepsilon_{xh}(u)$ for $h\in H$. 
It follows that the image $\bar{u}$ of $u$ under the homomorphism $\ZZ G\rightarrow \ZZ \bar{G}=\ZZ (G/F)$ has distinct non-vanishing partial augmentations $\varepsilon_{\bar{h}}(\bar{u})$ for $h\in H$. Hence precisely one partial augmentation of $u$ does not vanish, since the first Zassenhaus conjecture holds for $H$. This finishes the proof. 
\end{proof}

Now we look at Frobenius groups and the general Bovdi problem. 

\begin{proposition} \label{Fcomplement} Let $C$ be a finite group isomorphic to a Frobenius complement. Then (Gen-BP) holds for $C$. 
\end{proposition}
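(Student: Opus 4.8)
\emph{Overall strategy.} The plan is to prove the formally stronger statement that $o(g)=o(u)$ for every $g\in C$ with $\varepsilon_g(u)\neq 0$. Summing the partial augmentations over all classes of a fixed order then immediately gives $\varepsilon_{C[m]}(u)=0$ for every $m\neq n:=o(u)$, which is exactly (Gen-BP). By \ref{knownfacts}.(\ref{HertweckOrder}) we already know $o(g)\mid n$, so only the proper divisors of $n$ have to be excluded.

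\emph{The local engine.} The argument is local and mimics the proof of Theorem~\ref{theo:main}. Fix a prime $p\mid n$ and let $p^{a_p}$ be the exact $p$-part of $n$, so that $o(u_p)=p^{a_p}$. The crucial input is to know that the $p$-part $u_p$ is conjugate, in the units of $\ZZ_pC$, to an element $x_p\in C$. Granting this, conjugate units have equal order, so $o(x_p)=o(u_p)=p^{a_p}$, and Proposition~\ref{HertweckPaugmentation}, applied with the $p$-adic ring $\ZZ_pC$, yields $\varepsilon_g(u)=0$ whenever the $p$-part of $g$ is not conjugate to $x_p$ in $C$. Hence $\varepsilon_g(u)\neq 0$ forces $g_p\sim_C x_p$, so the $p$-part of $o(g)$ equals $p^{a_p}$; as this holds for every prime $p\mid n$, we get $n\mid o(g)$. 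Combined with $o(g)\mid n$ this pins down $o(g)=n$, as wanted.

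\emph{The conjugacy lemma and the main obstacle.} Everything thus reduces to the following local statement, which is where the special structure of Frobenius complements is indispensable and where I expect the real work: for each prime $p$, the $p$-part of a torsion unit of $\V(\ZZ C)$ is conjugate in $\V(\ZZ_pC)$ to an element of $C$. By the classification of Frobenius complements (see \cite{Passman68}), every Sylow subgroup of $C$ is cyclic, with the sole exception that a Sylow $2$-subgroup may be generalized quaternion; moreover the solvable Frobenius complements possess a normal metacyclic subgroup $C_0$ with $C/C_0$ isomorphic to a subgroup of $S_4$, while the non-solvable ones contain $\SL(2,5)\times M$, with $\gcd(|M|,30)=1$ and $M$ of cyclic Sylow subgroups, as a normal subgroup of index at most two. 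Whenever the relevant Sylow subgroup is normal the required conjugacy is immediate from Theorem~\ref{HertweckZ_p}, exactly as in the proof of Theorem~\ref{theo:main}; in particular, groups of odd order are metacyclic by \cite[V.8.18]{Huppert67} and hence Sylow tower groups, so for them (Gen-BP) is already covered by Theorem~\ref{prop:corollary_sylowtower}.

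\emph{Where the difficulty concentrates.} The genuine obstacle is formed by the primes whose Sylow subgroup is not normal, most notably $p=2$ with a generalized quaternion Sylow subgroup and the primes dividing $|\SL(2,5)|$ in the non-solvable case. Since a coprime direct factor $M$ contributes only cyclic Sylow subgroups and does not affect the $2$-, $3$- or $5$-local structure, the local conjugacy at each prime $p$ can be isolated in a subquotient whose Sylow $p$-subgroup is unchanged and which is either metacyclic (cyclic defect, handled as above) or one of the finitely many exceptional building blocks such as $\SL(2,3)$, the binary octahedral group and $\SL(2,5)$. For the latter I would invoke the local representation theory attached to cyclic and quaternion defect groups, together with the fact that (ZC1), and therefore the desired $p$-local conjugacy, is already established for these groups of order at most $143$ (cf.\ \cite{BHKMS}). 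Feeding the resulting conjugacy lemma back into the local engine then gives $o(g)=o(u)$ for all $g$ with $\varepsilon_g(u)\neq 0$, and hence (Gen-BP) for $C$.
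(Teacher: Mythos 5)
Your reduction to a $p$-local conjugacy statement is exactly where the argument breaks down. The ``conjugacy lemma'' you need --- that for \emph{every} prime $p$ the $p$-part of a torsion unit of $\V(\ZZ C)$ is conjugate to a group element by a unit of $\ZZ_p C$ --- is only available through Theorem~\ref{HertweckZ_p}, i.e.\ when the relevant Sylow subgroup is normal. For the non-normal primes (the quaternion Sylow $2$-subgroup, and $2,3,5$ in the $\SL(2,5)$ cases) you offer two substitutes, and neither works. First, the claim that the local conjugacy ``can be isolated in a subquotient'' has no justification: $\ZZ_p$-conjugacy and partial augmentations do not restrict to subgroups or lift from quotients, and Remark~\ref{not_subgroup_closed} of this very paper illustrates how badly conjugacy properties can fail to pass between a group and an overgroup. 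Second, knowing (ZC1) for the exceptional building blocks gives \emph{rational} conjugacy of torsion units in the group rings of those groups; Proposition~\ref{HertweckPaugmentation} requires conjugacy over a $p$-adic ring of the $p$-part of a unit of $\ZZ C$ itself, which is a different and unproved statement. So your engine never starts at precisely the primes you identify as the hard ones. (A smaller issue: dispatching the solvable cases via Sylow towers covers Z-groups, but not every metabelian Frobenius complement is a Sylow tower group; the paper has to invoke \cite[Corollary 1.4]{DokSeh} for that case.)

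The paper sidesteps the local conjugacy problem entirely. From Passman's structure theory it extracts a normal Hall subgroup $N$ of $C$ having a Sylow tower, with $C/N$ either trivial or one of the finitely many groups $\SL(2,3).2$, $\SL(2,5)$, $\SL(2,5).2$, for which (ZC1) over $\ZZ$ --- hence (Gen-BP) --- is already known \cite{HoeKim,BovHer,DJPM}; the metabelian case is handled separately by \cite{DokSeh}. The primes inside $N$ are then treated one at a time by Proposition~\ref{normalSylowp}, where the relevant Sylow subgroup \emph{is} normal in each successive quotient so that Theorem~\ref{HertweckZ_p} applies, and the remaining primes are absorbed into the hypothesis on $C/N$ rather than attacked locally. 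To rescue your approach you would have to either prove the $p$-adic conjugacy lemma for quaternion and $\SL(2,5)$-type local structure, which is not available, or reorganize the proof along the lines of this normal-Hall-subgroup reduction.
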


\begin{proof}

 By \cite[\S 18]{Passman68} the structure of a Frobenius
  complement $C$ is as follows. By $H = \boxxedcomp{Q}{N}$ we indicate that $H$ has a normal subgroup $N$ such that $H/N \simeq Q$.
 Denote by $F = \operatorname{F}(C)$ the Fitting subgroup of $C$. 

\begin{enumerate}
\item[(1)] If the Sylow $2$-subgroup of $C$ is cyclic then $C$ is a Z-group.  

\item[(2a)] Suppose that $F_2$ is cyclic. Then $C$ is metabelian.

\item[(2b)] Suppose that $F_2 \cong Q_8$. Then    
$$ (i) \ \ C = \boxxedcomp{C_2}{\SL (2,3) \times M} 
  \ \ \mbox{or}  \ \ (ii) \ \ C = \boxedcomp{ \SL (2,3) \times M}  \ \ 
   \mbox{or} \ \ (iii) \ \ C = \boxedcomp{Q_8  \times M}\ ,$$   
where $M$ is a  metacyclic Z-group of odd order coprime to the order of $\SL (2,3)$ and $Q_8$ respectively. 

\item[(2c)] Suppose that $F_2 \cong Q_{2^n}$ with $n \geq 4$.
Then   
$$  \ \ C = \boxxedcomp{C_2}{C_{2^{n-1}} \times M} \ \ $$ 
where $M$ is a
  metacyclic Z-group of odd order and the Sylow $2$-subgroup of $C$ is isomorphic to $Q_{2^n}$. So $C
  \cong F_2 \times M$.  
\item[(3)] If $C$ is non-solvable then 
$$ (i) \ \ C = \boxxedcomp{C_2}{\SL (2,5) \times M} \ \ \mbox{or}  \ \ (ii) \ \ C = \boxedcomp{\SL (2,5) \times M}\ ,$$ 
where $M$ is a
  metacyclic Z-group of odd order coprime to the order of $\SL (2,5)$.  
\end{enumerate}
\vskip1em
All Sylow subgroups of $M$ are cyclic. Thus $M$ has in each case a Sylow tower. 
It follows that $C$ is metabelian or that $C$ has a normal Hall subgroup $N$
with a Sylow tower, $C/N$ is of order coprime to $|N|$ and isomorphic to  
$$ \boxxedcomp{C_2}{\SL (2,3)}\ , \qquad \boxedcomp{\SL(2,5)}\ , \qquad \boxxedcomp{C_2}{\SL (2,5)}\ ,$$
or trivial. We have that (ZC1) is valid for all these possible complements of $N$ by \cite{HoeKim,BovHer,DJPM}. 
Thus by Corollary~\ref{prop:corollary_sylowtower} we get that (Gen-BP)
holds for $ C .$ 
Finally, if $C$ is metabelian, then (Gen-BP) holds by \cite[Corollary 1.4]{DokSeh}.    
\end{proof} 

\begin{corollary} \label{Frobenius} Let $G$ be a  Frobenius group. 
Then (Gen-BP) holds for $G.$ 
\end{corollary}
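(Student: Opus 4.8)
The plan is to reduce the statement to two facts already established for the building blocks of a Frobenius group: the nilpotency of the kernel and the validity of (Gen-BP) for complements. Write $G = N \rtimes C$ with $N$ the Frobenius kernel and $C$ a Frobenius complement. First I would record that $N$ is a normal Hall subgroup of $G$: it is normal by definition, and since $G$ is Frobenius we have $|G| = |N| \cdot |C|$ with $\gcd(|N|,|C|) = 1$, so the order of $N$ is coprime to its index.

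Next I would invoke the theorem of J.~Thompson that the Frobenius kernel $N$ is nilpotent (this is already used in the proof of Proposition~\ref{FrobeniusXAbeliano}). A nilpotent group is the direct product of its Sylow subgroups and hence in particular possesses a Sylow tower, so $N$ is a Sylow tower subgroup of $G$.

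Finally, the quotient $G/N$ is isomorphic to the complement $C$, and (Gen-BP) holds for $C$ by Proposition~\ref{Fcomplement}. Thus all hypotheses of the second statement of Proposition~\ref{normalSylowp} are met: $G$ has a normal Hall subgroup $N$ which is a Sylow tower subgroup, and (Gen-BP) holds for $G/N$. Applying that proposition immediately yields (Gen-BP) for $G$.

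Since every ingredient is already in place, there is essentially no obstacle at this final step; the substance has been absorbed into the earlier results. The genuinely hard inputs are Proposition~\ref{Fcomplement}, whose proof rests on the detailed structure theory of Frobenius complements and the reduction machinery of Proposition~\ref{normalSylowp}, together with Thompson's deep theorem on the nilpotency of the Frobenius kernel. The corollary is then the correct bookkeeping that combines these via the Hall-subgroup reduction of Proposition~\ref{normalSylowp}.
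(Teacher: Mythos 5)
Your proof is correct and follows essentially the same route as the paper: Thompson's theorem makes the Frobenius kernel a nilpotent (hence Sylow tower) normal Hall subgroup, Proposition~\ref{Fcomplement} gives (Gen-BP) for the complement $G/N \cong C$, and the second part of Proposition~\ref{normalSylowp} then yields the claim. The only cosmetic difference is that the paper phrases the reduction as an ``if and only if,'' while you correctly use just the direction that Proposition~\ref{normalSylowp} actually provides.
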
 

\begin{proof} Let $N$ be the Frobenius kernel of $G $ and $C$ a
  Frobenius complement. By
  Thompson's famous result $N$ is a nilpotent Hall subgroup. Thus 
we may apply Proposition \ref{normalSylowp} and get that (Gen-BP)
holds for $G$ if, and only if, it holds for $C .$  But this follows
  from Proposition \ref{Fcomplement}.  
\end{proof} 

\begin{remark} It is unknown whether (ZC1) holds for all Frobenius
  groups.
The arguments in the proofs used above show that for many of them this
is indeed the case.    
\end{remark}

\begin{remark}\label{remark_on_Frobenius} Let $G$ be a Frobenius group with Frobenius kernel $N$,
  $C$ a Frobenius complement of $G$
  and $H$ a finite group for which (Gen-BP) (and therefore (KP))
  holds. The analogous question to Propositions
  \ref{FrobeniusXAbeliano} and \ref{Hcoprime} is whether (Gen-BP)
  holds for $G \times H$. The analogue to \ref{Hcoprime} follows
  immediately from Proposition~\ref{normalSylowp} provided (Gen-BP) is valid
  for $C \times H$ and $\gcd(|H|,|G|) = 1$. 
 
If $H$ is nilpotent then we may write $H = H_1 \times H_2$ such that
  $\gcd(|H_1|,|C|) = 1 $ and each prime dividing $|H_2|$ divides $|C|.$
  Then $N \times H_1$ is a normal nilpotent Hall subgroup of
  $G \times H.$ From Proposition~\ref{normalSylowp} and
  Proposition~\ref{Fcomplement} we see that (Gen-BP) holds for $G
  \times H_1 .$ Moreover (Gen-BP) holds for $G \times H$ if it holds
  for $C \times H_2 .$  
 The latter is true when $C \times H_2$ has a Sylow tower and 
this is the case when  
  $C $ has a Sylow tower. 
  
  In case when $H$ is abelian we come back to this 
  at the end of the next section.             
\end{remark}

\section{Extending Coefficients}\label{sec:HELP}

Let $G$ be a finite group. 
There is an obvious generalization of the first Zassenhaus conjecture to group rings where the coefficients are allowed to come from rings of algebraic integers.

\begin{problem}\label{ZC1_OG} Let $\O$ be the ring of algebraic integers in a number field $K$. Let $u \in \V(\O G)$ be a unit of finite order. Is $u$ conjugate by a unit of $KG$ to an element of $G$?
\end{problem}

This question is connected to certain instances of the classical first Zassenhaus conjecture, where the coefficients come from $\ZZ$, as can be seen from the following result of Hertweck (see \cite[Proposition~8.2]{Hertweck08}).
        
\begin{proposition}\label{prop:Hertweck_reduction} Let $G$ be a finite
  group and let $A$ be a finite abelian group of exponent $m$. 
        Suppose that any torsion element of $\V(\ZZ[\zeta_m]G)$ is conjugate in $(\QQ(\zeta_m)G)^\times$ to an element of $G$. Then (ZC1) holds for $G \times A$. \end{proposition}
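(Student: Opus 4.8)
The plan is to verify (ZC1) for $G \times A$ through the partial augmentation criterion \ref{knownfacts}.\eqref{MRSW}: it suffices to show that for every torsion unit $u \in \V(\ZZ(G\times A))$ of order $n$, every $s \in G \times A$ and every divisor $d$ of $n$ one has $\varepsilon_s(u^d) \geq 0$. Since $A$ is central in $G\times A$, each conjugacy class has the shape $g^G \times \{a\}$ with $g \in G$ and $a \in A$, so I will record the partial augmentations as $\varepsilon_{(g,a)}(u^d)$. The crucial device is the family of ring homomorphisms $\phi_\lambda \colon \ZZ(G\times A)\to \ZZ[\zeta_m]G$, $(g,a)\mapsto \lambda(a)\,g$, indexed by the characters $\lambda \in \widehat{A}$, whose values are $m$-th roots of unity because $A$ has exponent $m$. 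Each $\phi_\lambda$ sends torsion units to torsion units, and the hypothesis on $\V(\ZZ[\zeta_m]G)$ will constrain the images $\phi_\lambda(u^d)$ severely.

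First I would normalize. The image $\bar u$ of $u$ under the augmentation $\ZZ(G\times A)\to\ZZ A$ is a normalized torsion unit of $\ZZ A$, hence a trivial unit $a_0\in A$ since $A$ is abelian. Replacing $u$ by the central translate $u\cdot(1,a_0^{-1})$ — which is again a torsion unit and is rationally conjugate to a group element if and only if $u$ is — I may assume that $u$, and hence every power $u^d$, maps to the identity of $A$. Consequently each $\phi_\lambda(u^d)=\phi_\lambda(u)^d$ has augmentation $1$, so it genuinely lies in $\V(\ZZ[\zeta_m]G)$, and by hypothesis it is conjugate in $(\QQ(\zeta_m)G)^\times$ to an element of $G$. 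As partial augmentations are invariant under conjugation by units of the group ring over any coefficient extension, this forces $\varepsilon_g(\phi_\lambda(u^d)) \in \{0,1\}$ for every $g \in G$.

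Next comes the key computation. Unravelling $\phi_\lambda$ and using that the $G$-conjugacy class of $(g,a)$ is $g^G\times\{a\}$ gives, for each $g\in G$, $\lambda\in\widehat{A}$ and $d\mid n$, the identity $\varepsilon_g(\phi_\lambda(u^d)) = \sum_{a\in A}\lambda(a)\,\varepsilon_{(g,a)}(u^d) = \lambda(\beta_{g,d})$, where $\beta_{g,d} = \sum_{a\in A}\varepsilon_{(g,a)}(u^d)\,a \in \ZZ A$. Thus $\lambda(\beta_{g,d})\in\{0,1\}$ for every character $\lambda$. Since $\QQ A$ embeds into $\prod_\lambda \mathbb{C}$ through its characters, the relation $\lambda(\beta_{g,d})^2=\lambda(\beta_{g,d})$, valid for all $\lambda$, propagates to $\beta_{g,d}^2=\beta_{g,d}$, so $\beta_{g,d}$ is an idempotent of $\ZZ A$. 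But the integral group ring of a finite group has only the idempotents $0$ and $1$, whence $\beta_{g,d}\in\{0,\,1_A\}$. In either case all coefficients $\varepsilon_{(g,a)}(u^d)$ are non-negative: they all vanish, or $\varepsilon_{(g,1)}(u^d)=1$ and the rest are $0$. Ranging over all $g$ and all $d\mid n$ yields the required non-negativity, and \ref{knownfacts}.\eqref{MRSW} finishes the proof.

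The main obstacle is that the hypothesis only delivers conjugacy over the cyclotomic field $\QQ(\zeta_m)$, whereas (ZC1) for $G\times A$ demands conjugacy over $\QQ$. I resolve this by never leaving the realm of partial augmentations, which are integral and conjugation-invariant under any coefficient extension; the characters $\lambda$ serve only to \emph{decouple} the $A$-direction, and the arithmetic rigidity of $\ZZ A$ — the absence of non-trivial idempotents — is exactly what converts the weak per-character data $\lambda(\beta_{g,d})\in\{0,1\}$ back into honest non-negativity over $\QQ$. A secondary point demanding care is the normalization ensuring each $\phi_\lambda(u^d)$ really lies in $\V(\ZZ[\zeta_m]G)$, which is why the reduction to $\bar u = 1_A$ is carried out at the outset.
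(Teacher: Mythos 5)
Your proof is correct. The paper does not actually prove this proposition itself but defers to Hertweck \cite[Proposition~8.2]{Hertweck08}, and your argument --- translating by the central unit $(1,a_0)$ so that $u$ maps to $1$ in $\ZZ A$, projecting along each character $\lambda$ of $A$ to a normalized torsion unit of $\ZZ[\zeta_m]G$, observing that $\beta_{g,d}=\sum_{a}\varepsilon_{(g,a)}(u^d)\,a\in\ZZ A$ has all character values in $\{0,1\}$ and is therefore a trivial idempotent of $\ZZ A$, and concluding via the partial-augmentation criterion \ref{knownfacts}.\eqref{MRSW} --- is essentially the same reduction as in the cited source.
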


Hertweck's proof of Proposition~\ref{prop:Hertweck_reduction} actually
proves part a) of the following Proposition. It may be easily modified for (Gen-BP)
-- stated  as part b).  Of course we say that (Gen-BP) holds for
$\ZZ[\zeta_m]G$ if and only if the unique non-vanishing trace of a torsion element
$u \in \V(\ZZ[\zeta_m]G)$ is $\varepsilon_{G[o(u)]}(u)$.

\begin{proposition}\label{extendedHertweck_reduction} 
Let $G$ be a finite group, let $A$ be a finite abelian group of
exponent $m $ and let $t \in \mathbb{N}$ such that $t$ divides the
exponent of $G \times A.$   
\begin{itemize}
\item[a)]
Suppose that any  $u\in \V(\ZZ[\zeta_m]G)$ of order dividing $t$ 
is conjugate in
  $(\QQ(\zeta_m)G)^\times$ to an element of $G$. Then each torsion unit of
  order $t$ of $\V(\ZZ [G \times A])$ is rationally conjugate to an element of $G
  \times A$. 
\item[b)]         
If (Gen-BP) holds for
$\ZZ[\zeta_m]G$ then (Gen-BP) holds for $\ZZ [G \times A]$. \\
More precisely if for any $u \in \V(\ZZ[\zeta_m]G)$ of order
dividing $t$ we have that 
$\varepsilon_{G[k]}(u) \neq 0 $ if, and only if, $k = o(u) $,  then  
for any $u \in \V(\ZZ (G \times A))$ of order
$t$ we also have that 
$\varepsilon_{(G \times A)[k]}(u) \neq 0 $ if, and only if, $k = t .$
\end{itemize}
\end{proposition}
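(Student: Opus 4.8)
\emph{Setup.} The plan is to compare $\ZZ[G\times A]$ with $\ZZ[\zeta_m]G$ through the characters of $A$. For each $\lambda\in\Irr(A)$ define the ring homomorphism $f_\lambda\colon \ZZ[G\times A]\to\ZZ[\zeta_m]G$ by $f_\lambda(x,a)=\lambda(a)\,x$ and $\ZZ$-linear extension; this is well defined because $\lambda(a)\in\langle\zeta_m\rangle$ (as $A$ has exponent $m$) and $\lambda$ is multiplicative. These are exactly the projections of $\QQ(\zeta_m)[G\times A]\cong\prod_{\lambda\in\Irr(A)}\QQ(\zeta_m)G$ onto its Wedderburn components, with central primitive idempotents $e_\lambda=\tfrac1{|A|}\sum_{a\in A}\overline{\lambda(a)}\,a$. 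Since $A$ is central, the $G\times A$-class of $(x,a)$ is $x^G\times\{a\}$, so for any $w\in\ZZ[G\times A]$ and $x\in G$ one has the transfer formula
\[\varepsilon_x\bigl(f_\lambda(w)\bigr)=\sum_{a\in A}\lambda(a)\,\varepsilon_{(x,a)}(w).\]
Let $a_0=\pi_A(u)\in A$ be the (necessarily trivial) image of $u$ under augmenting $G$, put $\omega_\lambda:=\lambda(a_0)$, and set $v_\lambda:=\omega_\lambda^{-1}f_\lambda(u)$. Then $v_\lambda\in\V(\ZZ[\zeta_m]G)$ and, since $o(a_0)\mid t$, one checks $v_\lambda^{\,t}=1$. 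The one external fact I use throughout is classical: $\ZZ A$ has no idempotents other than $0$ and the identity element.

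\emph{Part a).} Fix $d\mid t$. Applying the transfer formula to $w=u^d$ and writing $f_\lambda(u^d)=f_\lambda(u)^d=\omega_\lambda^d v_\lambda^d$, the hypothesis applied to the normalized unit $v_\lambda^d$ (order dividing $t$) gives $\varepsilon_x(v_\lambda^d)\in\{0,1\}$, whence $\varepsilon_x(f_\lambda(u^d))=\omega_\lambda^d\,\varepsilon_x(v_\lambda^d)\in\{0,\omega_\lambda^d\}$. Consequently $w_{x,d}:=\sum_{a\in A}\varepsilon_{(x,a)}(u^d)\,a\in\ZZ A$ satisfies $\lambda(w_{x,d})=\varepsilon_x(f_\lambda(u^d))$, so $\lambda(a_0^{-d}w_{x,d})\in\{0,1\}$ for every $\lambda$. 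As characters separate $\QQ(\zeta_m)A$, this forces $a_0^{-d}w_{x,d}$ to be an idempotent of $\ZZ A$, hence $0$ or $1$; thus $w_{x,d}\in\{0,a_0^{d}\}$ and in particular every $\varepsilon_{(x,a)}(u^d)\in\{0,1\}\ge 0$. Since this holds for all $x,a$ and all $d\mid t$, \ref{knownfacts}.\eqref{MRSW} shows that $u$ is rationally conjugate to an element of $G\times A$.

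\emph{Part b).} For $j\in\mathbb N$ set $S_j:=\sum_{a\in A}\bigl(\sum_{o(x)=j}\varepsilon_{(x,a)}(u)\bigr)a\in\ZZ A$, the inner sum being over class representatives of order $j$; summing the transfer formula over these $x$ gives $\lambda(S_j)=\varepsilon_{G[j]}(f_\lambda(u))$. By hypothesis the unique nonvanishing generalized trace of $v_\lambda$ sits at $j=o(v_\lambda)=:n_\lambda$ and equals $1$, so $\lambda(S_j)=\omega_\lambda\,\varepsilon_{G[j]}(v_\lambda)=\lambda(a_0)\,\delta_{j,n_\lambda}$. Hence $S_j$ agrees in $\QQ(\zeta_m)A$ with $a_0\,e_{\Lambda_j}$, where $\Lambda_j=\{\lambda:n_\lambda=j\}$ and $e_{\Lambda_j}=\sum_{\lambda\in\Lambda_j}e_\lambda$. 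The crucial point is integrality: as $a_0$ is a unit of $\ZZ A$ and $S_j\in\ZZ A$, the idempotent $e_{\Lambda_j}=a_0^{-1}S_j$ lies in $\ZZ A$, hence equals $0$ or $1$. Thus each block $\Lambda_j$ is empty or all of $\Irr(A)$, and since the $\Lambda_j$ partition $\Irr(A)$ there is a \emph{single} integer $n$ with $n_\lambda=n$ for all $\lambda$. Feeding this back yields $S_n=a_0$ and $S_j=0$ for $j\ne n$, i.e. $\sum_{o(x)=j}\varepsilon_{(x,a)}(u)=\delta_{(j,a),(n,a_0)}$. Moreover $v_\lambda^n=1$ for all $\lambda$ gives $u^n=(1,a_0^n)$ in $\ZZ[G\times A]$, so $t=\operatorname{lcm}(n,o(a_0))$. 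Using $o\bigl((x,a)\bigr)=\operatorname{lcm}(o(x),o(a))$ we finally compute, for every $k$,
\[\varepsilon_{(G\times A)[k]}(u)=\sum_{a\in A}\ \sum_{\substack{j\,:\,\operatorname{lcm}(j,o(a))=k}}\ \sum_{o(x)=j}\varepsilon_{(x,a)}(u)=\mathbf 1\bigl[\operatorname{lcm}(n,o(a_0))=k\bigr]=\delta_{k,t},\]
which is exactly (Gen-BP) for $\ZZ[G\times A]$ at order $t$ (and hence the unbounded statement as well).

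\emph{Main obstacle.} In both parts the engine is the elementary fact that $\ZZ A$ has only trivial idempotents; the delicate step is in part b), where one must realize that this integrality constraint is strong enough to force the orders $n_\lambda=o(\omega_\lambda^{-1}f_\lambda(u))$ to be independent of $\lambda$. Once this ``constant order'' phenomenon is established, the generalized traces collapse onto $k=t$ as above, the remaining bookkeeping with least common multiples of orders being routine; isolating and justifying it is where Hertweck's ``easy modification'' in fact requires the most care.
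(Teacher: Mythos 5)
Your proof is correct and follows the same route as the argument the paper relies on: the paper supplies no proof of its own but defers to Hertweck's proof of Proposition~\ref{prop:Hertweck_reduction}, which is exactly your setup — projecting $\ZZ[G\times A]$ onto the components $\ZZ[\zeta_m]G$ along the linear characters $\lambda$ of $A$, normalizing by $\lambda(a_0)$ where $a_0$ is the (trivial) image of $u$ in $\V(\ZZ A)$, and recovering the partial augmentations of $u$ by Fourier inversion over $\Irr(A)$ combined with the triviality of idempotents of $\ZZ A$. The one genuinely new ingredient needed for part b) — that this integrality constraint forces the orders $n_\lambda=o\bigl(\lambda(a_0)^{-1}f_\lambda(u)\bigr)$ to be independent of $\lambda$, after which $\operatorname{lcm}(n,o(a_0))=t$ collapses the generalized traces onto $k=t$ — is precisely the ``easy modification'' the paper leaves to the reader, and you have identified and justified it correctly.
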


Let $\O$ be the ring of algebraic integers in a number field $K$. Then $\O$ is $G$-adapted (i.e.\ it is an integral domain of characteristic $0$ and no prime divisor of the order of $G$ is invertible in $\O$).
Many of the usual theorems on torsion units of integral group rings
still hold in the case of coefficients from a $G$-adapted ring (for the first two statements see \cite[Theorem~1.1]{Hertw06} and for the last two \cite[Proposition~2.2, Theorem~2.1]{HertweckPa}).

\begin{nr}\label{lemma:standardfacts} Let $R$ be a $G$-adapted ring and let $u \in \V(R G)$ be an element of order $n$. Let $K$ be the field of fractions of $R$. Then the following statements hold:
\begin{enumerate}
\item $n$ divides the exponent of $G$. 
\item $\varepsilon_1(u) = 0$ if $n \not= 1$.
\item \label{vanishing_pas} If $x\in G$ then $\varepsilon_x(u) = 0$ whenever $o(x) \nmid n$.
\item \label{conjugacy} $u$ is conjugate by a unit of $KG$ to an element of $G$ if and only if, for all divisors $d$ of $n$, all partial augmentations of $u^d$ but one vanish. 
\end{enumerate}
\end{nr}

In view of the last statement, it is desirable to have tools at hand that can be used to produce constraints on partial augmentations of torsion units of $\V(R G)$. In the case of coefficients from $\mathbb{Z}$ this can be achieved, for example, by the well-known HeLP method (see Section~\ref{sectionHeLPMethod}).  We present an extension of this method to rings of algebraic integers.

In the sequel we fix a finite group $G$, a ring of algebraic integers $\O$, an element $u \in \V(\O G)$ of order $n$ and augmentation 1 and a complex primitive $n$th root of unit $\zeta$.
We can linearly extend each ordinary ($p$-Brauer) character $\chi$ of $G$ to a character of $\V(\O G)$ (of the $p$-regular torsion elements of $\V(\O G)$). With exactly the same proof (cf.\ e.g.\ \cite[§~4]{HertweckPa}) the formula for the multiplicity of roots of unity as eigenvalues remains valid. 
The multiplicity $\mu_\ell(u, \chi)$ of $\zeta^\ell$ as eigenvalue of $D(u)$, where $D$ is a representation affording $\chi$, is

\[ \mu_\ell(u, \chi) = \frac{1}{n} \sum_{d \mid n} \Tr_{\QQ(\zeta^d)/\QQ}\left(\chi(u^d)\zeta^{-d\ell}\right). \]
Clearly, this expression has to be a non-negative integer. Note that we have $\chi(u^d) \in \QQ(\zeta^d)$, as this is the sum of all the eigenvalues of $D(u^d)$. We can isolate the term for $d = 1$, 
\begin{equation} \frac{1}{n} \left( \Tr_{\QQ(\zeta)/\QQ}\left(\chi(u)\zeta^{-\ell}\right) +  \sum_{1 \not= d \mid n} \Tr_{\QQ(\zeta^d)/\QQ}\left(\chi(u^d)\zeta^{-d\ell}\right) \right) = \mu_\ell(u, \chi) \in \mathbb{Z}_{\geq 0}, \label{eq:formulat_mu}\end{equation}
and assume by induction that the latter sum is known. We have $\chi(u) = \sum_{x^G} \varepsilon_x(u) \chi(x)$ (see \cite[Theorem~3.2]{HertweckPa} for Brauer characters) and \ref{lemma:standardfacts}.\eqref{vanishing_pas} guarantees $\varepsilon_x(u) = 0$ whenever $o(x) \nmid n$. So all the character values at conjugacy classes which might have a non-zero partial augmentation are contained in $\QQ(\zeta)$.  That also the partial augmentations are contained in $\QQ(\zeta)$, so that we can use the $\QQ$-linearity of the trace to simplify further, is guaranteed by the following lemma. 

\begin{lemma}\label{lemma:pas_in_Zzeta} Let $\O$ be a ring of algebraic integers and $G$ be a finite group. If $u \in \V(\O G)$ is an element of order $n$ and $\zeta$ a primitive $n$th root of unity, then $\varepsilon_x(u) \in \ZZ[\zeta] \cap \O$ for every $x \in G$. \end{lemma}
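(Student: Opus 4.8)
The plan is to separate the two assertions hidden in the statement. The membership $\varepsilon_x(u) \in \O$ is immediate: writing $u = \sum_{g \in G} u_g g$ with all $u_g \in \O$, the partial augmentation $\varepsilon_x(u) = \sum_{g \in x^G} u_g$ is a finite sum of elements of $\O$. Since $\ZZ[\zeta]$ is the full ring of integers of $\QQ(\zeta)$, an algebraic integer lying in $\QQ(\zeta)$ automatically lies in $\ZZ[\zeta]$. Hence the entire lemma reduces to the single claim that $\varepsilon_x(u) \in \QQ(\zeta)$ for every $x \in G$.

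To establish this I would fix an embedding $\O \hookrightarrow \mathbb{C}$ and regard each ordinary irreducible character $\chi \in \Irr(G)$ as extended $\mathbb{C}$-linearly to $\V(\O G)$. Set $m = \exp(G)$, so that $n \mid m$ and all values $\chi(x)$ lie in $\QQ(\zeta_m)$. As $u^n = 1$, every eigenvalue of a representation affording $\chi$ evaluated at $u$ is an $n$th root of unity, so $\chi(u) = \sum_{x^G} \varepsilon_x(u)\,\chi(x)$ is a sum of $n$th roots of unity and thus lies in $\ZZ[\zeta]$. Viewing these equations (as $\chi$ ranges over $\Irr(G)$) as a linear system in the unknowns $\varepsilon_x(u)$ and inverting it via the second orthogonality relation yields the Fourier-type formula
\[ \varepsilon_x(u) = \frac{1}{|\C_G(x)|} \sum_{\chi \in \Irr(G)} \overline{\chi(x)}\,\chi(u). \]
Since each $\overline{\chi(x)} \in \QQ(\zeta_m)$ and each $\chi(u) \in \ZZ[\zeta] \subseteq \QQ(\zeta_m)$, this already places $\varepsilon_x(u)$ in $\QQ(\zeta_m)$.

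The remaining and decisive step is to descend from $\QQ(\zeta_m)$ to $\QQ(\zeta) = \QQ(\zeta_n)$, which is where I expect the real work to lie. Let $\sigma \in \operatorname{Gal}(\QQ(\zeta_m)/\QQ(\zeta))$; then $\sigma(\zeta_m) = \zeta_m^a$ for some $a$ with $\gcd(a,m)=1$ and $a \equiv 1 \pmod n$. Applying $\sigma$ to the displayed formula and using that $\sigma(\chi(y)) = \chi(y^a)$ for every $y \in G$, that $\sigma$ fixes each $\chi(u) \in \QQ(\zeta)$, that $\sigma$ commutes with complex conjugation on the abelian extension $\QQ(\zeta_m)/\QQ$, and that $\C_G(x^a) = \C_G(x)$ (because $\gcd(a,o(x))=1$ gives $\langle x^a\rangle = \langle x\rangle$), I obtain $\sigma(\varepsilon_x(u)) = \varepsilon_{x^a}(u)$. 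Now by \ref{lemma:standardfacts}.\eqref{vanishing_pas} the partial augmentations at classes with $o(x) \nmid n$ vanish, so it suffices to treat $o(x) \mid n$; for such $x$ the congruence $a \equiv 1 \pmod n$ forces $a \equiv 1 \pmod{o(x)}$ and hence $x^a = x$, so $\sigma(\varepsilon_x(u)) = \varepsilon_x(u)$. As $\sigma$ was arbitrary, $\varepsilon_x(u)$ is fixed by $\operatorname{Gal}(\QQ(\zeta_m)/\QQ(\zeta))$ and therefore lies in $\QQ(\zeta)$, which finishes the argument. The main obstacle is precisely this Galois descent: one must verify that the Galois action on the coefficients of $u$ translates, through the inversion formula, into the substitution $x \mapsto x^a$ on conjugacy classes, and then exploit the order restriction $o(x) \mid n$ on the surviving classes to collapse $x^a$ back to $x$.
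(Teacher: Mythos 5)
Your proof is correct, but it follows a genuinely different route from the paper's. You correctly split the claim into two parts: $\varepsilon_x(u)$ is an algebraic integer (a finite sum of elements of $\O$), and $\ZZ[\zeta]$ is the full ring of integers of $\QQ(\zeta)$, so everything reduces to showing $\varepsilon_x(u) \in \QQ(\zeta)$. You then get membership in $\QQ(\zeta_m)$, $m = \exp(G)$, via the explicit column-orthogonality inversion formula, and descend to $\QQ(\zeta_n)$ by a Galois argument using $\sigma(\chi(y)) = \chi(y^a)$ with $a \equiv 1 \pmod n$ together with the vanishing of partial augmentations at classes whose order does not divide $n$; all the steps (commutation of $\sigma$ with complex conjugation, $\langle x^a\rangle = \langle x\rangle$, and $x^a = x$ when $o(x) \mid n$) check out. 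The paper avoids Galois theory entirely: it restricts the linear system $\chi_i(u) = \sum_j \chi_i(x_j)\varepsilon_{x_j}(u)$ to the $d$ columns indexed by classes of order dividing $n$ (the other partial augmentations vanish by the same standard fact \ref{lemma:standardfacts}.\eqref{vanishing_pas} you invoke), selects an invertible $d\times d$ submatrix of the character table, whose entries automatically lie in $\QQ(\zeta_n)$ because they are character values at elements of order dividing $n$, and inverts it over $\QQ(\zeta_n)$ in one stroke. Both arguments rest on the same two inputs, namely $\chi(u) \in \ZZ[\zeta]$ and the vanishing result; yours buys an explicit closed formula for $\varepsilon_x(u)$ at the cost of the extra descent step, while the paper's submatrix trick reaches $\QQ(\zeta_n)$ directly but non-explicitly, since the choice of invertible submatrix is not canonical.
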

\begin{proof} Let $\Irr(G) = \{\chi_1, ..., \chi_h\}$ be the set of irreducible characters of $G$ and let $\{x_1, ..., x_h\}$ be a set of class representatives of $G$. Without loss of generality we may assume that $x_1, ..., x_d$ are the class representatives with an order a divisor of $n$. Then, by \ref{lemma:standardfacts}.\eqref{vanishing_pas}, $\varepsilon_{x_j}(u) = 0$ for $j \in \{d+1, .., h\}$ and it remains to show that $\varepsilon_{x_j}(u) \in \ZZ[\zeta]$ for $j \in \{1, .., d\}$. We have \[\begin{pmatrix} \chi_1(u) \\ \chi_2(u) \\ \vdots \\ \chi_h(u) \end{pmatrix} = \begin{pmatrix} \chi_1(x_1) & \chi_1(x_2) & \hdots & \chi_1(x_h) \\ \chi_2(x_1) &  \chi_2(x_2) & \hdots & \chi_2(x_h) \\ \vdots & \vdots & \ddots & \vdots \\ \chi_h(x_1) &  \chi_h(x_2) & \hdots & \chi_h(x_h) \end{pmatrix}\begin{pmatrix} \varepsilon_{x_1}(u) \\ \varepsilon_{x_2}(u) \\ \vdots \\ \varepsilon_{x_h}(u) \end{pmatrix}. \] As $u$ is of order $n$, the column on the left hand side is an element of $\ZZ[\zeta]^h$.
Denote the character table of $G$ with the ordering above as $C = (c_{i,j})$. Let $J = \{1, ..., d\}$. As $C$ is invertible, we can choose $I \subseteq \{1, ..., h\}$ with $|I| = d$ in such a way that the $d\times d$-submatrix $(c_{i,j})_{i \in I, j\in J}$ is invertible. 
Note that $(c_{i,j})_{i \in I, j\in J} \in \GL(d, \QQ(\zeta))$, as the entries are character values of elements with an order a divisor of $n$. Hence \[ (\varepsilon_{x_j}(u))_{j \in \{1, .., d\}} = (c_{i,j})_{i \in I, j\in J}^{-1}(\chi_i(u))_{i \in I} \in \QQ(\zeta)^d.\] As these partial augmentations are algebraic integers, we infer $\varepsilon_{x}(u) \in \ZZ[\zeta] \cap \O$ for all $x \in G$. This completes the proof.
\end{proof}
\begin{remark} Although the partial augmentations of normalized units in $\O G$  of order $n$ are contained in $\ZZ[\zeta_n]$, this is in general not true for the coefficients: Let $ G = S_3$ and $\O = \ZZ[\zeta_9]$. Then \begin{align*} u &= (1,2,3)+\zeta_9(1,2)+\zeta_9^4(2,3)+\zeta_9^7(1,3)\\ &= (1,2,3)+\zeta_9((1,2)+\zeta_3(2,3)+\zeta_3^2(1,3))  \in \V(\O G)\end{align*} is a unit of order $3$.\end{remark}

We can choose a basis $B$ of $\ZZ[\zeta] \cap \O$ over $\ZZ$ and express $\varepsilon_x(u) = \sum_{b \in B}\alpha_{x, b}b$ with $\alpha_{x, b} \in \ZZ$. Then 
\[ \Tr_{\QQ(\zeta)/\QQ}\left(\chi(u)\zeta^{-\ell}\right) = \sum_{x^G}\sum_{b \in B}\alpha_{x, b}\Tr_{\QQ(\zeta)/\QQ}\left(\chi(x)\zeta^{-\ell}b\right).\]
So using \eqref{eq:formulat_mu}, we get a system of linear inequalities over $\ZZ$:
\begin{equation} \sum_{x^G}\sum_{b \in B}\alpha_{x, b}\Tr_{\QQ(\zeta)/\QQ}\left(\chi(x)\zeta^{-\ell}b\right) +  \sum_{1 \not= d \mid n} \Tr_{\QQ(\zeta^d)/\QQ}\left(\chi(u^d)\zeta^{-d\ell}\right)  = n\mu_\ell(u, \chi) \in n\mathbb{Z}_{\geq 0}. \label{eq:system} \end{equation}
Note that compared to the ``classical'' HeLP method, where $\O = \ZZ$, the number of variables grows by a factor $[K\cap\QQ(\zeta):\QQ]$. Now we want to exploit Proposition \ref{prop:Hertweck_reduction} to verify the first Zassenhaus conjecture for direct products $G \times A$, where $A$ is an arbitrary finite abelian group.

For a given divisor $n$ of the exponent of $G$ and $\zeta$ an arbitrary complex root of unity, we will show that each element of order $n$ of $\V(\ZZ[\zeta]G)$ (if it exists) is conjugate in $(\QQ(\zeta)G)^\times$ to an element of $G$ by showing that every solution of (\ref{eq:system}) is in accordance with the condition of \ref{lemma:standardfacts}.\eqref{conjugacy} or that there is no solution to (\ref{eq:system}) at all (in case there is no group element of that order). We can again employ Lemma \ref{lemma:pas_in_Zzeta} to see that it is enough to do this for $\zeta$ a primitive $n$th root of unity. So for each group $G$ we are left with the problem of finding the solutions to a finite number of systems of linear inequalities over the integers. We will choose $B = \{1, \zeta, ..., \zeta^{\varphi(n) - 1}\}$ as basis of $\ZZ[\zeta]$ over $\ZZ$ where $\varphi$ denotes the Euler totient function. 

A rational prime $p$ is called \emph{totally ramified} in an algebraic number field $K$ (or rather its ring of algebraic integers $\O$) if for each prime ideal $\mathfrak{p}$ containing the ideal $p\O$, the field $\O/\mathfrak{p}$ has cardinality $p$. For example, $p$ is totally ramified in $\ZZ[\zeta_{p^a}]$ for $a \in \ZZ_{\geq 0}$, see e.g.\ \cite[Proposition~7.4.1]{WeissAN}. Based on a result of Cohn-Livingstone \cite{CohLiv}, one can establish extra constraints for torsion units of $\ZZ G$, sometimes called the ``Wagner test'', cf.\ \cite[Proposition~3.1]{HeLPOverview}. With an adapted proof we get the following version for coefficients in rings of algebraic integers.

\begin{proposition}[``Wagner test'']\label{prop:Wagner} Let $G$ be a finite group, $p$ be a prime and $\O$ a ring of integers such that $p$ is totally ramified in $\O$. Let $u \in \V(\O G)$ be a unit of order $o(u) = p^j m$ with $m \not= 1$. Then for $s \in G$ and $\mathfrak{p}$ a prime ideal containing $p\O$, we have $$\smashoperator[r]{\sum\limits_{x^G,\ x^{p^j} \sim s}} \varepsilon_x(u) \equiv \varepsilon_{s}(u^{p^j}) \mod \mathfrak{p}.$$ 
\end{proposition}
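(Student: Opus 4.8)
The plan is to prove the congruence by reducing coefficients modulo $\mathfrak{p}$ and reading it off from a Frobenius-type identity in the group algebra over the residue field. Since $p$ is totally ramified in $\O$, the residue field $\O/\mathfrak{p}$ has exactly $p$ elements, so $\O/\mathfrak{p}\cong\mathbb{F}_p$ and the coefficientwise reduction $\O G\to(\O/\mathfrak{p})G\cong\mathbb{F}_pG$ is a ring homomorphism. I would write $\widetilde{a}$ for the image of $a\in\O G$, so that $\widetilde{u}=\sum_{g}c_g\,g$ with $c_g\in\mathbb{F}_p$ the reduction of the coefficient of $u$ at $g$. Every partial augmentation $\varepsilon_x(u)=\sum_{h\in x^G}u_h$ lies in $\O$, and since both forming partial augmentations and raising to the $p^j$-th power commute with this reduction, one has $\widetilde{\varepsilon_x(u)}=\varepsilon_x(\widetilde{u})$ and $\widetilde{\varepsilon_s(u^{p^j})}=\varepsilon_s(\widetilde{u}^{\,p^j})$. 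It therefore suffices to prove the asserted congruence after reduction, i.e.\ as an identity in $\mathbb{F}_p$.

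The heart of the argument is the identity $\widetilde{u}^{\,p^j}\equiv\sum_{g\in G}c_g\,g^{p^j}\pmod{[\mathbb{F}_pG,\mathbb{F}_pG]}$, where $[\mathbb{F}_pG,\mathbb{F}_pG]$ denotes the additive subgroup spanned by commutators. To establish it I would expand $\widetilde{u}^{\,p^j}$ as a sum over all words of length $p^j$ in the letters $c_g\,g$ and collect these words into orbits under cyclic rotation. Two cyclic rotations of a word have products that are congruent modulo $[\mathbb{F}_pG,\mathbb{F}_pG]$ (since $ab\equiv ba$), so an orbit of size $\tau$ contributes $\tau$ times a single representative; as $\tau$ divides $p^j$ it is a power of $p$, and in characteristic $p$ every orbit with $\tau>1$ contributes $0$. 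Only the constant words survive, each contributing $c_g^{\,p^j}g^{p^j}=c_g\,g^{p^j}$, where the last equality is Fermat's little theorem in $\mathbb{F}_p$ — this is precisely where the residue field being $\mathbb{F}_p$, hence total ramification, is used. Finally I would apply $\varepsilon_s$, which vanishes on $[\mathbb{F}_pG,\mathbb{F}_pG]$ because $gh\sim hg$ forces $\varepsilon_s(ab)=\varepsilon_s(ba)$, to obtain $\varepsilon_s(\widetilde{u}^{\,p^j})=\sum_{g:\,g^{p^j}\sim s}c_g=\sum_{x^G:\,x^{p^j}\sim s}\varepsilon_x(\widetilde{u})$, and then lift back along the reduction to recover the claim over $\O$.

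I expect the combinatorial orbit count to be the only genuine obstacle: one must argue cleanly that the cyclic orbit of a word of length $p^j$ has size a power of $p$ and that every nonconstant such orbit contributes $0$ in characteristic $p$, while the constant words exactly reproduce the $p^j$-power map on group elements. Once this Frobenius identity is in place, the remaining ingredients — that reduction modulo $\mathfrak{p}$ commutes with taking powers and with partial augmentations, and that $\varepsilon_s$ kills commutators — are routine. I would also remark that the derivation is uniform in $j$ and does not require iterating the case $j=1$, so the hypothesis $m\neq1$ is needed only to make the test meaningful (comparing $u$ with its proper power $u^{p^j}$), not for the congruence itself.
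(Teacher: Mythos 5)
Your proof is correct and is essentially the paper's own argument: the authors likewise expand $u^{p^j}$ over length-$p^j$ tuples, let the cyclic group of order $p^j$ act by rotation so that non-constant orbits (of size a positive power of $p$) vanish in the residue field, and use that total ramification forces $\O/\mathfrak{p}\cong\mathbb{F}_p$ so that $u_g^{p^j}\equiv u_g$. Your packaging via the Frobenius identity modulo $[\mathbb{F}_pG,\mathbb{F}_pG]$ before applying $\varepsilon_s$, rather than applying $\varepsilon_s$ first, is only a cosmetic reorganization of the same computation.
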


\begin{proof} Let $u = \sum_{g\in G} u_g g \in\V(\O G),$ set $q = p^j$ and $v = u^q$. By definition
\begin{equation} \varepsilon_s(v) = \sum_{\substack{(g_1, ..., g_q) \in G^q \\ g_1  ...  g_q \sim s}} \prod_{j=1}^q u_{g_j}. \label{vsum} \end{equation}
The set over which the sum is taken can be decomposed into $\mathcal{M} = \{(g, ..., g) \in G^q: g^q \sim s \}$ and $\mathcal{N} = \{ {(g_1, ..., g_q) \in G^q}:  g_1  ...  g_q \sim s \; \text{and} \; \exists \, r, r' : g_r \not= g_{r'}\}$. 

The cyclic group $C_q = \langle t \rangle$ of order $q$ acts on the set $\mathcal{N}$ by letting the generator $t$ shift the entries of a tuple to the left, i.e., $(g_1, g_2, g_3, ..., g_q) \cdot t = (g_2, g_3, ..., g_q, g_1)$. Note that all orbits have length $p^i$ with $i \geq 1$.  For elements in the same orbit, the same integer is summed up in \eqref{vsum}.
Using that $\O/\mathfrak{p}$ has characteristic $p$ and that $\operatorname{U}(\O/\mathfrak{p})$ is a group of order $p - 1$ we get
\[\varepsilon_s(v) =  \smashoperator[r]{\sum_{(g, ..., g) \in \mathcal{M}}} u_g^q + \sum_{(g_1, ..., g_q) \in \mathcal{N}} \prod_{j=1}^q u_{g_j} \equiv \smashoperator[r]{\sum_{(g, ..., g) \in \mathcal{M}}} u_g^q \equiv \smashoperator[r]{\sum_{(g, ..., g) \in \mathcal{M}}} u_g \equiv \smashoperator[r]{\sum_{x^G,\ x^{p^j} \sim s}} \varepsilon_x(u) \mod \mathfrak{p}. \qedhere \] 
\end{proof}

Using the constraints we obtained so far in this section will be called the \emph{extended HeLP method}, abbreviated as \emph{HELP} method. This has been implemented in the computer algebra system \textsf{GAP} \cite{GAP4} and applied to groups of small order.

If $G$ is nilpotent, a Camina group, a cyclic-by-abelian group or if
it has a normal Sylow $p$-subgroup with abelian quotient, then the
first Zassenhaus conjecture is known for $G \times A$, for $A$ a
finite abelian group. So Proposition \ref{prop:Hertweck_reduction}
together with the above method will not provide us in these case with anything new. If we filter all groups up to order $95$ that are not covered by what is said before, we are left with $17$ groups (up to order $100$, there are $73$ such groups).

We will list in Table \ref{tab:HELP} the \textsc{SmallGroup ID}s of those groups of order at most $95$ that are not covered by the previously mentioned results together with the structure description in the first two columns. The third column contains the orders $n$ of units in $\V(\ZZ[\zeta_n]G)$, where the HELP method (including the ``Wagner test'') does not provide a complete solution; in parentheses the number of distributions of non-trivial partial augmentations that cannot be ruled out is indicated. In case there are only trivial partial augmentations left, a checkmark is included. The last column contains the orders where either the Wagner test or the so-called ``quotient method'' (a unit would map to a unit with an already eliminated distribution of partial augmentations in an integral group ring of a quotient group) can be used together with the number for such distributions where this applies; if the quotient method does not provide new information, the zero is omitted. 

\begin{table}[h!]
\caption{Groups of order at most $95$ investigated with the HELP method.}\label{tab:HELP}
\begin{center}
\begin{tabular}{cccc}\hline
\textsc{SmallGroupID} & Structure Description & Order & Wagner test / quotient method \\ \hline\hline
\texttt{[24,12]} & $S_4$ & 4(4) & 4(4) \\
\texttt{[48,28]} & $C_2 . S_4 = \SL(2,3) . C_2$ & 8(8)\\
\texttt{[48,29]} & $\GL(2,3)$ & 8(4) & 4(1), 8(4) \\
\texttt{[48,30]} & $A_4 : C_4$  & 4(8) & 4(21 / 5)\\
\texttt{[48,48]} & $C_2 \times S_4$ & 4(16) & 4(12) \\
\texttt{[60,5]} & $A_5$ & 6(2)\\
\texttt{[72,15]} & $((C_2 \times C_2) : C_9) : C_2$ & 4(4) & 4(4), 12(3) \\
\texttt{[72,22]} & $(C_6 \times S_3) : C_2$  & \checkmark & 4(2)\\
\texttt{[72,23]} & $(C_6 \times S_3) : C_2$ & \checkmark & 4(2)\\
\texttt{[72,24]} & $(C_3 \times C_3) : Q_8$  & \checkmark \\
\texttt{[72,31]} & $(C_3 \times C_3) : Q_8$ & \checkmark \\
\texttt{[72,33]} & $(C_{12} \times C_3) : C_2$ & \checkmark \\
\texttt{[72,35]} & $(C_6 \times C_6) : C_2$: & \checkmark & 4(2)\\
\texttt{[72,40]} & $(S_3 \times S_3) : C_2$ = $S_3 \wr S_2$ & 3(2), 6(4) & 4(2) \\
\texttt{[72,42]} & $C_3 \times S_4$ & 4(4), 12(8) & 4(4)\\
\texttt{[72,43]} & $(C_3 \times A_4) : C_2$ & 4(4) & 4(4), 12(2)\\
\texttt{[72,44]} & $A_4 \times S_3$  & \checkmark \\ \hline
\end{tabular}
\end{center}
\end{table}

Note that there are problems with normalized units of order a power of
$2$ if and only if $G$ maps onto $S_4$. In particular those
distributions of partial augmentations that cannot be ruled out in
these groups always map on one of the distributions of partial
augmentations in $S_4$ that cannot be ruled out. We will discuss this
below. So, in order to solve Problem \ref{ZC1_OG} for all groups of order
at most $95$, one has only to deal with $S_4$, $A_5$ and the
wreath product $S_3 \wr S_2$. This will also be discussed later, first we
record the following consequence of these calculations and Proposition
\ref{extendedHertweck_reduction}.

\begin{proposition}\label{ExtendedResult}
        Let $G$ be a group of order at most $95$ and $A$ a finite abelian group. Then the first Zassenhaus conjecture holds for $G \times A$ except if $G$ maps onto $S_4$ or $G \simeq A_5$ or $G \simeq S_3 \wr S_2$. \\
        If $G\simeq A_5$ or $G\simeq S_3 \wr S_2$, then the first
        Zassenhaus conjecture holds for $G \times A$ if $A$ is a
        $3'$-group. If $G$ maps onto $S_4$, then the first Zassenhaus
        conjecture holds for $G \times A$ if $4$ does not divide the
        exponent of $A$. \\
In the case that $S_4$ is an image of $G$ all units of $V(\ZZ (A \times
        G))$ whose order is not divisible by $4$  are rationally conjugate
        to an element of $A \times G.$ 
\end{proposition}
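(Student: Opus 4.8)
The plan is to run every case through Hertweck's reduction and the extended HeLP computations tabulated in Table~\ref{tab:HELP}. Fix a finite abelian group $A$ and put $m=\exp(A)$. By Proposition~\ref{prop:Hertweck_reduction} it suffices, for (ZC1) of $G\times A$, to show that every torsion unit of $\V(\ZZ[\zeta_m]G)$ is conjugate in $(\QQ(\zeta_m)G)^\times$ to an element of $G$; equivalently, by \ref{lemma:standardfacts}.\eqref{conjugacy}, that for each divisor $n$ of $\exp(G)$ every unit of order $n$ in $\V(\ZZ[\zeta_m]G)$ has, together with all its powers, only trivial partial augmentations. The pivotal remark is Lemma~\ref{lemma:pas_in_Zzeta}: such a unit has all partial augmentations in $\ZZ[\zeta_n]\cap\ZZ[\zeta_m]=\ZZ[\zeta_{\gcd(n,m)}]$, so its distribution of partial augmentations occurs already among those surviving the HELP run over $\ZZ[\zeta_n]G$ recorded in Table~\ref{tab:HELP}. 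First I would dispose of the groups $G$ of order at most $95$ outside the table: as recalled before it, each such $G$ is nilpotent, a Camina group, cyclic-by-abelian, or has a normal Sylow subgroup with abelian quotient, and in every one of these cases (ZC1) for $G\times A$ is already known --- by \cite{Weiss91}, by Propositions~\ref{CaminaGroups} and \ref{FrobeniusXAbeliano}, by \cite{CMdR}, and by Theorem~\ref{theo:main} (or \cite{Hertw06}), respectively.

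For the $17$ remaining groups the argument is computational. For all of them except the images of $S_4$, the group $A_5$, and the wreath product $S_3\wr S_2$, the HELP run (with the ``Wagner test'' of Proposition~\ref{prop:Wagner} and the quotient method) leaves no non-trivial distribution at any order. By the pivotal remark above, the hypothesis of Proposition~\ref{prop:Hertweck_reduction} is then satisfied for every abelian $A$, and (ZC1) holds for $G\times A$. This proves the first assertion.

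For the exceptional families I would exploit that the relevant coefficient ring collapses once $\exp(A)$ avoids the offending prime power. For $G\simeq A_5$ the only unresolved order is $6$, and for $G\simeq S_3\wr S_2$ the unresolved orders are $3$ and $6$; if $A$ is a $3'$-group, then $3\nmid m$ forces $\gcd(n,m)\mid 2$ for $n\in\{3,6\}$, so by the pivotal remark the partial augmentations of any such unit lie in $\ZZ$. The surviving distributions all carry a non-zero $\zeta_3$-component and hence cannot be realised over $\ZZ$; equivalently, the constraint system collapses to the ordinary HeLP system for $\V(\ZZ G)$, which has only the trivial solution because (ZC1) holds for $A_5$ and $S_3\wr S_2$ by \cite{BHKMS}. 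Thus these orders are resolved and (ZC1) follows for $A_5\times A$ and $S_3\wr S_2\times A$. For $G$ mapping onto $S_4$ the unresolved orders are $4$, $8$ and $12$, all divisible by $4$, and the hypothesis $4\nmid m$ gives $4\nmid\gcd(n,m)$, so no primitive fourth root of unity is available in $\ZZ[\zeta_{\gcd(n,m)}]$. For $n\in\{4,8\}$ this again forces integer partial augmentations and reduces to the ordinary HeLP system, resolved by (ZC1) for $G$. The genuinely delicate point --- and the main obstacle --- is $n=12$ with $3\mid m$, where $\gcd(12,m)\in\{3,6\}$ and the partial augmentations lie in $\ZZ[\zeta_3]$ rather than in $\ZZ$. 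Here I would invoke the structural observation recorded after Table~\ref{tab:HELP}: every distribution that survives the $\ZZ[\zeta_{12}]G$ computation maps onto a surviving order-$4$ distribution of $S_4$ and therefore carries a non-zero $\zeta_4$-component; since $\zeta_4\notin\ZZ[\zeta_3]$, no such distribution occurs over $\ZZ[\zeta_3]$, so the order-$12$ units are resolved as well and (ZC1) follows whenever $4\nmid m$.

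Finally, for the last assertion I would apply part~a) of Proposition~\ref{extendedHertweck_reduction} with $t=o(u)$ to a torsion unit $u\in\V(\ZZ[A\times G])$ with $4\nmid t$. Every divisor $n$ of $t$ then satisfies $4\nmid n$, hence $4\nmid\gcd(n,m)$ for every $m$, so the offending $\zeta_4$-freedom is absent by Lemma~\ref{lemma:pas_in_Zzeta}; since all unresolved orders of an $S_4$-image are divisible by $4$, the HELP run resolves every order $n$ with $4\nmid n$. Thus every $v\in\V(\ZZ[\zeta_m]G)$ of order dividing $t$ is conjugate in $(\QQ(\zeta_m)G)^\times$ to an element of $G$, and Proposition~\ref{extendedHertweck_reduction}.a) yields that $u$ is rationally conjugate to an element of $A\times G$.
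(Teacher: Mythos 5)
The paper itself states this proposition only as ``a consequence of these calculations and Proposition~\ref{extendedHertweck_reduction}'', so your reconstruction of the intended strategy --- Hertweck's reduction, Lemma~\ref{lemma:pas_in_Zzeta} together with $\ZZ[\zeta_n]\cap\ZZ[\zeta_m]=\ZZ[\zeta_{\gcd(n,m)}]$, the data of Table~\ref{tab:HELP}, and Proposition~\ref{extendedHertweck_reduction}.a) for the last assertion --- is exactly the right frame, and your handling of the first and third assertions, of $A_5$, and of the order-$12$ case for $S_4$-images is sound.

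There is, however, a genuine gap in your treatment of the exceptional cases, stemming from the auxiliary principle you invoke twice: that once the partial augmentations are forced into $\ZZ$, ``the constraint system collapses to the ordinary HeLP system for $\V(\ZZ G)$, which has only the trivial solution because (ZC1) holds for $G$ by \cite{BHKMS}''. This is a non sequitur: (ZC1) holding for $G$ does not mean that HeLP eliminates all non-trivial distributions over $\ZZ$ --- in \cite{BHKMS} the leftover cases are killed by methods (e.g.\ the lattice method) that are tied to $\ZZ G$-lattices and are not available for units of $\ZZ[\zeta_m]G$ with merely integral partial augmentations. The failure is not hypothetical: for $G=S_3\wr S_2$ the four surviving order-$6$ distributions listed after Table~\ref{tab:HELP} have partial augmentations $\varepsilon_{2b}(u),\varepsilon_{2c}(u),\varepsilon_{6a/b}(u)\in\{\pm 1\}\subseteq\ZZ$, so they carry \emph{no} $\zeta_3$-component; your assertion that ``the surviving distributions all carry a non-zero $\zeta_3$-component'' contradicts the paper's own data, and restricting $A$ to be a $3'$-group does not remove them by the integrality argument. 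Hence the $S_3\wr S_2$ part of the second assertion is not established by your proof and needs an additional argument. (For the orders $n\in\{4,8\}$ in the $S_4$-image case your conclusion survives, but the correct justification is the one you already use for $n=12$: by the remark preceding the proposition every surviving distribution projects onto one of the four surviving order-$4$ distributions of $S_4$, each of which involves $i$, and $i\notin\ZZ[\zeta_{\gcd(n,m)}]$ when $4\nmid m$; the appeal to ``(ZC1) for $G$'' should be replaced by this projection argument throughout.)
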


We will now focus on one of the problematic partial augmentations for $S_4$ that could not be ruled out yet. Note that \texttt{CharacterTable("S4")} in \textsf{GAP} produces a permutation of the columns of the character table \texttt{CharacterTable(SmallGroup(24,12))}. We will use the notation for conjugacy classes of the latter table, i.e.\ $2a$ contains the transpositions $(\bullet\bullet)$ and $2b$ the double transpositions $(\bullet\bullet)(\bullet\bullet)$. The irreducible characters of $S_4$ will be denoted by $\chi_{1a} = 1$, $\chi_{1b} = \sgn$, $\chi_2$ (this is the inflation of the irreducible non-linear character of $S_3$), $\chi_{3a}$ and $\chi_{3b} = \chi_{3a} \otimes \sgn$. The character table of $S_4$ we are using is thus as follows (dots indicate zeros):

\begin{center}
\begin{tabular}{cccccc} \hline\hline
class & 1a & 2a & 3a & 2b & 4a \\
cycletype & () & $(\bullet\bullet)$ & $(\bullet\bullet\bullet)$ & $(\bullet\bullet)(\bullet\bullet)$ & $(\bullet\bullet\bullet\, \bullet)$ \\ \hline
$\chi_{1a}$ & 1 & 1 & 1 & 1 & 1 \\
$\chi_{1b}$ & 1 & $-1$ & 1 & 1 & $-1$ \\
$\chi_{2}$ & 2 & . & $-1$ & 2 & . \\
$\chi_{3a}$ & 3 & $-1$ & . & $-1$ & 1 \\
$\chi_{3b}$ & 3 & 1 & . & $-1$ & $-1$ \\ \hline\hline
\end{tabular}
\end{center}

\vskip1em

For normalized units $u$ of order $4$ we always have $u^2 \sim 2b$ and we are left with the following four cases of distributions of partial augmentations that do not correspond to units that are conjugate in $\QQ(i) S_4$ to an element of the group:

\vskip1em

\noindent\begin{minipage}{.4\textwidth}
\underline{Case 1}:\\
$\varepsilon_{2a}(u) = i$\\
$\varepsilon_{2b}(u) = 1$\\
$\varepsilon_{4a}(u) = -i$\\

\underline{Case 2}:\\
$\varepsilon_{2a}(u) = 1+i$\\
$\varepsilon_{4a}(u) = -i$\\
\end{minipage}
\begin{minipage}{.4\textwidth}
\underline{Case 3}:\\
$\varepsilon_{2a}(u) = -i$\\
$\varepsilon_{2b}(u) = 1$\\
$\varepsilon_{4a}(u) = i$\\

\underline{Case 4}:\\
$\varepsilon_{2a}(u) = 1-i$\\
$\varepsilon_{4a}(u) = i$\\
\end{minipage}\\
Of course, all partial augmentations not recorded are zero.

Consider the ring homomorphism $\tau \colon \ZZ[i]S_4 \to \ZZ[i]S_4$ induced by complex conjugation on the coefficients. Then one can see that a unit as in case 1 exists if and only if a unit as in case 3 exists and similarly for case 2 and 4. So it suffices to consider the first 2 cases.

Assume we are in case 1, i.e.\ $u = \sum u_g g \in \V(\ZZ[i]S_4)$ is of order $4$, $u^2 \sim 2b = (\bullet\bullet)(\bullet\bullet)$, a double transposition, and \[\varepsilon_{2a}(u) = i, \ \varepsilon_{2b}(u) = 1, \ \varepsilon_{4a}(u) = -i.\] Clearly $\chi_{1a}(u) = 1$, $\chi_{1b}(u) = 1$, $\chi_2(u) = 2$, $\chi_{3a}(u) = -1 - 2i$ and $\chi_{3b}(u) = -1 + 2i$. Let $D$ be the direct sum of the representations corresponding to $\chi_{1a}, \chi_{1b}, \chi_2, \chi_{3a}$ and $ \chi_{3b}$ (in this order). Then in a diagonalized form $D(u)$ looks as follows:
\begin{equation} D(u) \sim \left(1, 1, \left(\begin{smallmatrix} 1 & \\ & 1 \end{smallmatrix}\right), \left(\begin{smallmatrix} -1 & & \\ & -i & \\ & & -i \end{smallmatrix}\right), \left(\begin{smallmatrix} -1 & & \\ & i & \\ & & i \end{smallmatrix}\right) \right). \label{eq:image_of_u_under_D}\end{equation}
From this it immediately follows that $u$ is in the kernel of the natural homomorphism $\V(\ZZ[i]S_4) \to \V(\ZZ[i]S_3)$. In \cite[Section~2]{LT91}, Luthar and Trama obtained certain congruences modulo $|G|$ from the integrality of the coefficients of the group rings elements. In their paper it turned out to be sufficient to exclude the existence of certain units of order $4$ and $6$ in $\V(\ZZ S_5)$ and they could conclude that the first Zassenhaus conjecture holds for $S_5$. We can obtain similar restrictions (modulo the ideal $24\ZZ[i]$) that express that the coefficients of the units in question actually lie in $\ZZ[i]$. However in this case, these systems do not provide us with contradictions. There are even solutions modulo $6\ZZ[i]$ corresponding to matrices of order $4$. These solutions correspond to normalized units of order $4$ in $\ZZ[i, \frac{1}{2}]S_4$, which even lie in an order of $\QQ(i)S_4$ containing $\ZZ[i]S_4$. One such example is
\begin{align*} u =   \frac{1}{4}  & \Big( (-1+i)(1, 2) + (1+i)(1, 3) + i (1,4)+ i(2, 3) -(2, 4) + (3,4) +(1, 2, 3)  \\  &  + (-1+i)(1,3,4) - (1 + i)(1,4,2) + (2,4,3) +(2-i)(1,2)(3,4) + (2+i)(1,3)(2,4) \\ & - (1,2,3,4) - i(1,2,4,3) - (1+i)(1,3,2,4) + (1,4,2,3) + (1-i)(1,4,3,2) -  i(1,3,4,2) \Big), \end{align*}
which is of order $4$.

Note that a torsion unit of $\ZZ[i]S_4$ in case 2 above is not in the kernel of the natural homomorphism $\V(\ZZ[i]S_4) \to \V(\ZZ[i]S_3)$, but rather maps to an involution.

The other two groups of order at most $95$ that cannot be handled and do not project onto $S_4$ are $A_5$ and $S_3 \wr S_2$. We also provide  all remaining non-trivial distributions of partial augmentations for these groups. In the sequel denote by $\zeta =\zeta_3$ a primitive $3$rd root of unity.

For the group $A_5$, units $u$ with augmentation one and order $6$ in $\ZZ[\zeta]A_5$ can not be proved to be conjugate within $\QQ(\zeta)A_5$ to a group element using the HELP method. Let $2a$ and $3a$ denote the unique $A_5$-conjugacy class of involutions and elements of order $3$, respectively. In all cases that cannot be ruled out, $u^3 \sim 2a$, $u^2 \sim 3a$ and
\vskip1em
\noindent\begin{minipage}{.4\textwidth}
\underline{Case 1}:\\
$\varepsilon_{2a}(u) = -2\zeta$\\
$\varepsilon_{3a}(u) = 1 + 2\zeta$\\
\end{minipage}
\begin{minipage}{.4\textwidth}
\underline{Case 2}:\\
$\varepsilon_{2a}(u) = -2\zeta^2$\\
$\varepsilon_{3a}(u) = 1 + 2\zeta^2$.\\
\end{minipage}\\
These cases can also not be ruled out by using Brauer characters (which might provide additional information in case of non-solvable groups) or the so-called lattice method \cite{BMM10}.  

For $G = S_3 \wr S_2$, units of order $3$ and $6$ with non-trivial partial augmentations in $\ZZ[\zeta]G$ remain after the application of the HELP method. 
For elements of order $3$ the non-trivial distributions of partial augmentations are
\[ (\varepsilon_{3a}(u), \varepsilon_{3b}(u)) \in \left\{\left(-\zeta, -\zeta^2\right), \left(-\zeta^2, -\zeta\right)  \right\}. \]
For elements of order $6$ with non-trivial partial augmentations that cannot be ruled out with HELP we always have $u^3 \sim 2c$ (the class of involutions in $S_2$) and

\noindent\begin{minipage}{.4\textwidth}
\underline{Case 1}:\\
$u^2 \sim 3b$\\
$\varepsilon_{2b}(u) = 1$\\
$\varepsilon_{2c}(u) = 1$\\
$\varepsilon_{6b}(u) = -1$\\

\underline{Case 2}:\\
$u^2 \sim 3b$\\
$\varepsilon_{2b}(u) = -1$\\
$\varepsilon_{2c}(u) = 1$\\
$\varepsilon_{6b}(u) = 1$\\
\end{minipage}
\begin{minipage}{.4\textwidth}
\underline{Case 3}:\\
$u^2 \sim 3a$\\
$\varepsilon_{2a}(u) = 1$\\
$\varepsilon_{2c}(u) = 1$\\
$\varepsilon_{6a}(u) = -1$\\

\underline{Case 4}:\\
$u^2 \sim 3a$\\
$\varepsilon_{2a}(u) = -1$\\
$\varepsilon_{2c}(u) = 1$\\
$\varepsilon_{6a}(u) = 1$\\
\end{minipage}\\
Note that case 1 and 3 and case 2 and 4 lie in the same $\operatorname{Aut}(S_3 \wr S_2)$ orbit (interchanging the two factors isomorphic to $S_3$ in the base group).

\begin{remark}\label{remarkSL25_S5_2S5} The HELP method can successfully be applied to the unique perfect group of order $120$, $\SL(2, 5)$. This proves the first Zassenhaus conjecture for $\SL(2, 5) \times A$, $A$ a finite abelian group.
\end{remark}

\begin{corollary} \label{ZC95Frob}
        (ZC1) holds for $G\times A$ where $A$ is any finite abelian
        group and $G$ is a Frobenius group whose complement $C$ either has
        order at most $95$ and is not isomorphic to $C_2.S_4$ or $C = \SL(2,5)$. 
\end{corollary}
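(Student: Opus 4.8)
The plan is to strip off the abelian direct factor first and then quote the small-order results of the previous section. By Proposition~\ref{FrobeniusXAbeliano}, if (ZC1) holds for the direct product $C \times A$ of the Frobenius complement $C$ with $A$, then it holds for $G \times A$. So it suffices to establish (ZC1) for $C \times A$ in the two situations permitted by the hypothesis: $C = \SL(2,5)$, and $C$ a Frobenius complement of order at most $95$ with $C \not\cong C_2.S_4$.

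The first situation is covered directly by Remark~\ref{remarkSL25_S5_2S5}, which records (ZC1) for $\SL(2,5) \times A$. For the second I would invoke Proposition~\ref{ExtendedResult}: it yields (ZC1) for $C \times A$ as soon as $C$ has order at most $95$, does not map onto $S_4$, and is isomorphic neither to $A_5$ nor to $S_3 \wr S_2$. Everything therefore reduces to checking that, among Frobenius complements of order at most $95$, the only group falling into one of these three exceptional classes is the excluded $C_2.S_4$.

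This last verification is the real content, and it is governed by the rigid structure of Frobenius complements rather than by unit theory. The two facts I would use are that every Sylow subgroup of a Frobenius complement is cyclic or generalized quaternion (visible in the classification recalled in Proposition~\ref{Fcomplement}, cf.\ \cite{Passman68}), and that a surjection of finite groups carries a Sylow $p$-subgroup onto a Sylow $p$-subgroup of the image. The Sylow $2$-subgroups of $A_5$ and of $S_3 \wr S_2$ are $C_2 \times C_2$ and the dihedral group of order $8$, neither of which is cyclic or generalized quaternion; hence neither group is a Frobenius complement, and these two classes do not arise at all. Suppose finally that $C$ maps onto $S_4$. Then $|C|$ is a multiple of $24$, so $|C| \in \{24, 48, 72\}$, and a Sylow $2$-subgroup $P$ of $C$ maps onto the dihedral Sylow $2$-subgroup of order $8$ of $S_4$. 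A cyclic group has no non-abelian quotient, and $Q_8$ has no quotient isomorphic to the dihedral group of order $8$; thus $P$ is generalized quaternion of order at least $16$. Since $24$ and $72$ have Sylow $2$-subgroups of order only $8$, this forces $|C| = 48$ and $P \cong Q_{16}$, and the classification in Proposition~\ref{Fcomplement} shows that the only Frobenius complement of order $48$ with quaternion Sylow $2$-subgroup mapping onto $S_4$ is $C_2.S_4$ (the binary octahedral group), which is excluded by hypothesis.

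Collecting the cases, (ZC1) holds for $C \times A$ throughout, and Proposition~\ref{FrobeniusXAbeliano} then yields (ZC1) for $G \times A$. The main obstacle is precisely the group-theoretic case analysis ruling out the three exceptional isomorphism types of Proposition~\ref{ExtendedResult}; once that is settled, all of the unit-theoretic work has already been carried out in the cited results.
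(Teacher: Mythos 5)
Your proposal is correct and follows essentially the same route as the paper: reduce to $C\times A$ via Proposition~\ref{FrobeniusXAbeliano}, invoke Proposition~\ref{ExtendedResult} and Remark~\ref{remarkSL25_S5_2S5}, and use the fact that Sylow subgroups of Frobenius complements are cyclic or generalized quaternion to exclude the exceptional groups. The only (cosmetic) difference is that the paper rules out the exceptions by inspecting the Sylow subgroups of the groups in Table~\ref{tab:HELP}, whereas you argue abstractly that a Frobenius complement surjecting onto $S_4$ must have a $Q_{16}$ Sylow $2$-subgroup and hence order $48$, which is a slightly more self-contained version of the same verification.
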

\begin{proof}
        By \cite[Theorem 18.1]{Passman68} we know that all Sylow
        $p$-subgroups of Frobenius complements are cyclic (in case $p$
        is odd) or cyclic or quaternion (for $p = 2$). However the
        groups of order at most $95$ in Table~\ref{tab:HELP} that
        cannot be handled have at least one Sylow subgroup which is
        not of that form except the case of $C_2.S_4$.   
The result now follows combining Proposition~\ref{FrobeniusXAbeliano}, Proposition~\ref{ExtendedResult} and Remark~\ref{remarkSL25_S5_2S5}.
\end{proof}

\begin{remark} \label{s5}
        For $S_5$ the HELP method can successfully be applied except for units of order $4$, $6$ and $12$. In these cases the problematic partial augmentations are as follows. Let $2a$ be the conjugacy class of involutions contained in $A_5$. For partial augmentations of $u \in \V(\ZZ[i]S_5)$ that cannot be ruled out we have $u^2 \sim 2b$ and
        \[((\varepsilon_{2a}(u), \varepsilon_{2b}(u), \varepsilon_{4a}(u)) \in \{ (0, 1-i, i), (1, -i, i), (0, -i, 1+i), (0, i, 1-i), (0, 1+i, -i), (1, i, -i)  \}. \]
        For elements $u \in \V(\ZZ[\zeta_3]S_5)$ of order $6$ the following remain (always $u^2 \sim 3a$): 
        \begin{align*} u^3 \sim 2b \ \ \text{ and } & \ \ ((\varepsilon_{2a}(u), \varepsilon_{2b}(u), \varepsilon_{3a}(u), \varepsilon_{6a}(u)) \in \{(1-2\zeta_6, 1, -1 + 2\zeta_6, 0), (-1 + 2\zeta_6, 1, 1-2\zeta_6, 0) \};  \\
        u^3 \sim 2a \ \ \text{ and } &  \ \ ((\varepsilon_{2a}(u), \varepsilon_{2b}(u), \varepsilon_{3a}(u), \varepsilon_{6a}(u)) \in \{(2\zeta_6, 0, 1 - 2\zeta_6, 0), (2 - 2\zeta_6, 0, -1+2\zeta_6, 0) \}.
        \end{align*}
        For elements $u \in \V(\ZZ[\zeta_{12}]S_5)$ of order $12$ the following remain (always $u^6 \sim 2a$, $u^4 \sim 3a$):
        
        \begin{center}
                \begin{tabular}{ccccccc}\hline
                        $\varepsilon_{2b}(u^3)$ & $\varepsilon_{4a}(u^3)$ & $\varepsilon_{2a}(u^2)$ & $\varepsilon_{3a}(u^2)$ & $\varepsilon_{2b}(u)$ & $\varepsilon_{4a}(u)$ & $\varepsilon_{6a}(u)$ \\ \hline\hline
                        $1-i$ & $i$ & $2\zeta_6$ & $1-2\zeta_6$ & $0$ & $1 + \zeta_{12} + \zeta_{12}^2$ & $ - \zeta_{12} - \zeta_{12}^2$ \\
                        $-i$ & $1+i$ & $2\zeta_6$ & $1-2\zeta_6$ & $1$  & $ \zeta_{12} - \zeta_{12}^2$ & $ - \zeta_{12} + \zeta_{12}^2$ \\
                        $i$ & $1-i$ & $2\zeta_6$ & $1-2\zeta_6$ & $1$  & $- \zeta_{12} - \zeta_{12}^2$ & $\zeta_{12} + \zeta_{12}^2$ \\
                        $1+i$ & $-i$ & $2\zeta_6$ & $1-2\zeta_6$ & $0$  & $1 + \zeta_{12} - \zeta_{12}^2$ & $\zeta_{12} - \zeta_{12}^2$ \\
                        $1-i$ & $i$ & $2-2\zeta_6$ & $-1+2\zeta_6$ & $0$  & $2 -\zeta_{12} - \zeta_{12}^2 + \zeta_{12}^3$ & $-1 + \zeta_{12} + \zeta_{12}^2 - \zeta_{12}^3$ \\ 
                        $-i$ & $1+i$ & $2-2\zeta_6$ & $-1+2\zeta_6$ & $1$ & $-1 -\zeta_{12} + \zeta_{12}^2 + \zeta_{12}^3$ & $1 + \zeta_{12} - \zeta_{12}^2 - \zeta_{12}^3$ \\  
                        $i$ & $1-i$ & $2-2\zeta_6$ & $-1+2\zeta_6$ & $1$ & $-1 +\zeta_{12} + \zeta_{12}^2 - \zeta_{12}^3$ & $1 - \zeta_{12} - \zeta_{12}^2 + \zeta_{12}^3$ \\ 
                        $1+i$ & $-i$ & $2-2\zeta_6$ & $-1+2\zeta_6$ & $0$ & $2 +\zeta_{12} - \zeta_{12}^2 - \zeta_{12}^3$ & $-1 + \zeta_{12} + \zeta_{12}^2 + \zeta_{12}^3$ \\ \hline
                \end{tabular}
        \end{center}
        
Thus (ZC1) holds for $S_5 \times A$, $A$ a finite abelian group if neither
$4$ nor $3$ divides the exponent of $A$. Moreover units of order 4 are
conjugate in $\mathbb{Q}(\zeta)S_5$ if $i \notin \ZZ [\zeta]$, i.e.\ if 4
does not divide the exponent of $A$.    
\end{remark}

\begin{remark}\label{remark_2.S5}
        For $G = 2.S_5 = \SL(2, 5).2$ the HELP method leaves problems with elements of order $8$. Here the problematic distributions of partial augmentations for $u \in \V(\ZZ[i]G)$ of order $8$ are as follows. Let $2a, 4a, 4b, 8a, 8b$ denote the conjugacy classes of order $2$, $4$ and $8$ of $G$ respectively as in the character table \texttt{CharacterTable("2.Sym(5)")} in \textsf{GAP}. Then $u^4 \sim 2a$, $u^2 \sim 4b$ and 
        \begin{align*}(\varepsilon_{4a}(u), \varepsilon_{4b}(u), \varepsilon_{8a}(u), \varepsilon_{8b}(u)) \in \{ & (1-i, 0, 0, i), (i, 1, 0, -i), (-i, 1, i, 0),  (1-i, 0, i, 0), \\
        & (1+i, 0, 0, -i), (i, 1, -i, 0), (-i, 1, 0, i), (1+i, 0, -i, 0) \}
        \end{align*}
        All partial augmentations not stated are zero.
        
         Thus (ZC1) holds for $2.S_5 \times A$, $A$ an abelian group, if $4$ does not divide the exponent of $A$.
\end{remark}

\begin{proposition} \label{specialp} Let $G$ and $H$ be finite groups, $p$ a
  prime and $D = H \times G$. Let $u \in \V(\ZZ D)$ a torsion unit. Let $M$ be a normal $p$-subgroup of $D$ and denote
by $\bar{u}$ the image of $u$ under $\ZZ D \to \ZZ D/M$.
Assume that $o(\bar{u}) < o(u)$ (e.g.\ if $p^m$ does not divide the exponent of $D/M$)
and
that 
$\varepsilon_{D/M[w]}(\bar{u}) \neq 0 $ if, and only if, $w =
o(\bar{u})$. 
Then 
\[ \varepsilon_{D[j]} (u) = 0 , \ \mbox{if} \  j \neq o(u). \]
\end{proposition}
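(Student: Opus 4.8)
The plan is to follow the pattern of Proposition~\ref{normalSylowp}, separating the control of the $p$-part of $u$ (obtained from Hertweck's theorems) from the control of its $p'$-part (obtained from the hypothesis on $\bar u$). Write $o(u)=p^{m}a$ with $p\nmid a$. As $M$ is a $p$-group, \ref{knownfacts}.\eqref{u_p_element} shows that the kernel of $\langle u\rangle\to\langle\bar u\rangle$ consists of $p$-elements; hence $o(u)/o(\bar u)$ is a power of $p$, the $p'$-parts of $o(u)$ and $o(\bar u)$ coincide, and we may write $o(\bar u)=p^{m'}a$. The hypothesis $o(\bar u)<o(u)$ then forces $m'<m$, so in particular $m\geq 1$. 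By \ref{knownfacts}.\eqref{HertweckOrder} it is enough to show $\varepsilon_{D[j]}(u)=0$ for every proper divisor $j$ of $o(u)$. I will use repeatedly that, since $M$ is normal, each conjugacy class of $D$ is either contained in $M$ or disjoint from it, and that reduction modulo $M$ leaves the $p'$-part of orders unchanged.

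I would first extract the two ingredients. From \eqref{ProjectionPA}, summing over all classes with a fixed image order gives, for every positive integer $w$,
\[ \sum_{\substack{\gamma\ \text{class of}\ D\\ o(\bar\gamma)=w}}\varepsilon_{\gamma}(u)=\varepsilon_{D/M[w]}(\bar u), \]
and the right-hand side vanishes unless $w=o(\bar u)$. For the $p$-part, set $v=u^{p^{m'}}$; its image $\bar v=\bar u^{p^{m'}}$ has order $a$, coprime to $p$, so $v_{p}=u_{p}^{p^{m'}}$ maps to $1$ modulo $M$ and, by Theorem~\ref{HertweckZ_p}, is conjugate in $\ZZ_{p}D$ to an element $z\in M$ of order $p^{m-m'}$; Proposition~\ref{HertweckPaugmentation} then yields $\varepsilon_{g}(v)=0$ whenever the $p$-part of $g$ is not conjugate to $z$. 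When $m'=0$ (that is, $p\nmid o(\bar u)$) one has $v=u$, and this says that $\varepsilon_{g}(u)\neq 0$ forces $g_{p}\sim z\in M$; thus $o(g)_{p}=p^{m}$ and $g_{p}\in M$, so $o(\bar g)$ equals the $p'$-part of $o(g)$. Consequently, on the support of $u$, $o(\gamma)=p^{m}c$ is equivalent to $o(\bar\gamma)=c$, and the displayed identity collapses to $\varepsilon_{D[p^{m}c]}(u)=\varepsilon_{D/M[c]}(\bar u)$, which vanishes for $c\neq a$; together with the pinning $o(g)_{p}=p^{m}$ this gives $\varepsilon_{D[j]}(u)=0$ for all $j\neq p^{m}a=o(u)$.

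The remaining, and genuinely harder, case is $m'\geq 1$. Here $u_{p}$ itself no longer maps to $1$ modulo $M$, so Proposition~\ref{HertweckPaugmentation} does not pin the $p$-part of $u$: the support of $u$ typically contains classes of several $p$-parts, and the vanishing $\varepsilon_{D[p^{e}a]}(u)=0$ for $m'\leq e<m$ must arise from cancellation between classes of equal order rather than from their absence, so that the projection identities above only control the aggregate $\sum_{e}\varepsilon_{D[p^{e}c]}(u)$. To resolve this I would induct on $o(u)$: using that (Gen-BP) for $\bar u$ is inherited by every power of $\bar u$ through its equivalence with (KP) \cite{MdR}, the unit $u^{p}$ again satisfies the hypotheses (its image has order $p^{m'-1}a<p^{m-1}a$), and one feeds the inductive conclusion for $u^{p}$, together with the conjugacy $v_{p}\sim z$ and the projection identities, into a descent that strips off one factor of $p$ at a time. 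The decisive difficulty I anticipate is precisely this transfer of the $p$-adic information available for the power $u^{p^{m'}}$ back to the generalized traces of $u$, that is, separating the contributions of the orders $p^{e}a$ with $m'\leq e<m$; carrying out that separation is the step that will require the most care.
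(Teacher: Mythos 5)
Your argument for the case $p \nmid o(\bar{u})$ is sound and matches the paper's overall strategy (pin the $p$-part of the order of every class in the support of $u$ to be exactly $p^{m}$, then collapse the projection identity \eqref{ProjectionPA} onto the hypothesis on $\bar{u}$). But the case $p \mid o(\bar{u})$, which you explicitly leave open, is a genuine gap, and the descent you sketch would not close it: knowing that $\varepsilon_{D[j]}(u^{p})=0$ for $j\neq o(u^{p})$ does not constrain the generalized traces of $u$ itself, since partial augmentations of $u$ and of $u^{p}$ are related only by congruences (as in Proposition~\ref{prop:Wagner}), not by equalities, and the projection identities only ever see the aggregates $\sum_{e}\varepsilon_{D[p^{e}l]}(u)$ that you correctly identify as the obstruction.

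The missing ingredient is \cite[Proposition~2]{Hertweck_OTUd}: for \emph{any} torsion unit $u\in\V(\ZZ D)$ and any $g\in D$ whose $p$-part has strictly smaller order than the $p$-part of $u$, one has $\varepsilon_{g}(u)=0$. This requires no hypothesis that $u_{p}$ (or any power of $u$) maps to $1$ modulo a normal $p$-subgroup, so it applies directly to $u$ even when $m'\geq 1$; combined with \ref{knownfacts}.\eqref{HertweckOrder} it shows that every class in the support of $u$ has order exactly $p^{m}l$ with $l\mid k$, so there is nothing to separate: all the traces $\varepsilon_{D[p^{e}l]}(u)$ with $e<m$ vanish term by term. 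After that, the identity $\varepsilon_{D[p^{m}l]}(u)=\sum_{i}\varepsilon_{D/M[p^{i}l]}(\bar{u})$ (valid because reduction modulo the $p$-group $M$ preserves the $p'$-part of orders, via \ref{knownfacts}.\eqref{u_p_element}) together with the hypothesis on $\bar{u}$ finishes the proof in one step, exactly as in your $m'=0$ case. In short: your detour through Theorem~\ref{HertweckZ_p} and Proposition~\ref{HertweckPaugmentation} recovers Hertweck's result only in the special case where $u_{p}$ itself dies in the quotient; the general statement is what the paper quotes, and without it your proof does not go through.
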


\begin{proof}  
Write $o(u) = p^m \cdot k$ with $k$ coprime to $p$.
As $o(\bar{u}) < o(u)$, we get 
$\varepsilon_g(u) = 0$ for each $g \in D$ whose $p$-part has
smaller order than the $p$-part of $u$, by \cite[Proposition 2]{Hertweck_OTUd}.
Moreover by \ref{knownfacts}.\eqref{HertweckOrder}, $ \varepsilon_g(u) = 0$ provided
$o(g)$ does not divide $o(u)$.    
So $ \varepsilon_g(u) \neq 0 $ implies that $o(g) = p^m \cdot l $ and
$l$ divides $k$.  Looking at the map from $\ZZ D$ onto $\ZZ D/M $ and using \ref{knownfacts}.\eqref{u_p_element}, it follows that
$$ \varepsilon_{D[p^m \cdot l]} (u) = \sum_i \varepsilon_{D/M[p^i \cdot l]}
(\bar{u}).$$ 
 By assumption $\varepsilon_{D/M[p^i \cdot l]}(\bar{u}) = 0$ if 
$p^i \cdot l \neq o(\bar{u})$.
Thus $ \varepsilon_{D[p^m \cdot l]}(u) = 0$ if $l \neq k $ and the
result follows. 
\end{proof}

\begin{corollary} \label{specialorders} Let $A$ be a finite abelian group.  
\begin{itemize}
\item[a)] Let $D = A \times S_4$ and $ u \in \V(\ZZ D)$ with $4$ divides the
  order of $u$. Then
   $$ \varepsilon_{D[m]}(u) = 0 \ \mbox{if} \  m \neq o(u). $$
\item[b)] Let $D = A \times G $ where $G$ is a group of order $48$ mapping
  onto $S_4.$ Let $ u \in \V(\ZZ D)$ with $4$ divides the
  order of $u$. Then
   $$ \varepsilon_{D[m]}(u) = 0 \ \mbox{if} \  m \neq o(u). $$
\item[c)]  Let $D = A \times 2.S_5$   and $ u \in \V(\ZZ D)$ with $8$ divides the
  order of $u .$ Then
   $$ \varepsilon_{D[m]}(u) = 0 \ \mbox{if} \  m \neq o(u). $$
\end{itemize}
\end{corollary}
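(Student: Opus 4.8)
The plan is to obtain all three parts from Proposition~\ref{specialp}, applied with the prime $p=2$ and, in each case, a carefully chosen normal $2$-subgroup $M\trianglelefteq D$ for which the quotient $D/M$ is a group where (Gen-BP) is already available and for which the $2$-part of $o(u)$ is forced to shrink. Throughout write $A=A_2\times A_{2'}$, with $A_2$ the Sylow $2$-subgroup and $A_{2'}$ the (abelian) Hall $2'$-subgroup of $A$.

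For a) I would take $M=A_2\times V$, where $V\cong C_2\times C_2$ is the Klein four normal subgroup of $S_4$. Then $M$ is a normal $2$-subgroup of $D=A\times S_4$ and $D/M\cong A_{2'}\times S_3$. Since $S_3$ is supersolvable and $A_{2'}$ is abelian, $A_{2'}\times S_3$ has a Sylow tower, so (Gen-BP) holds for it by Theorem~\ref{prop:corollary_sylowtower}; in particular $\bar u$ satisfies the trace hypothesis of Proposition~\ref{specialp}. As $4\mid o(u)$ the $2$-part of $o(u)$ is at least $4$, while the $2$-part of $\exp(D/M)$ is $2$; hence the $2$-part of $o(u)$ does not divide $\exp(D/M)$ and $o(\bar u)<o(u)$. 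Proposition~\ref{specialp} then gives the claim. Part b) runs identically: the kernel $Z$ of the surjection $G\twoheadrightarrow S_4$ has order $2$ (so is central), whence the preimage $N$ of $V$ is a normal $2$-subgroup of $G$ of order $8$ with $G/N\cong S_3$; taking $M=A_2\times N$ again yields $D/M\cong A_{2'}\times S_3$, and the same order comparison (now the $2$-part of $o(u)$ is $4$ or $8$, still not dividing $\exp(D/M)$) lets Proposition~\ref{specialp} apply.

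For c) I would take $M=A_2\times Z$, where $Z=Z(2.S_5)$ is the central subgroup of order $2$; it is the only nontrivial normal $2$-subgroup of $2.S_5$, and $2.S_5/Z\cong S_5$. Thus $D/M\cong A_{2'}\times S_5$. Since $8\mid o(u)$ the $2$-part of $o(u)$ is at least $8$, whereas the $2$-part of $\exp(A_{2'}\times S_5)$ is $4$; so $o(\bar u)<o(u)$ by the same reasoning. The remaining, decisive input is that the image $\bar u\in\V(\ZZ(A_{2'}\times S_5))$ has its unique non-vanishing generalized trace at $o(\bar u)$, i.e.\ that (Gen-BP) holds for $\bar u$. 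For this one would invoke that (ZC1), and hence (Gen-BP), holds for $S_5$ by Luthar--Trama \cite{LT91}, together with the reduction of Proposition~\ref{extendedHertweck_reduction} and the HELP analysis of $S_5$ summarised in Remark~\ref{s5}. Granting (Gen-BP) for $\bar u$, Proposition~\ref{specialp} finishes c).

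The identification of the quotients $D/M\cong A_{2'}\times S_3$, respectively $A_{2'}\times S_5$, and the exponent comparison forcing $o(\bar u)<o(u)$ are routine once the subgroup lattice is recorded. The genuine obstacle is verifying the trace hypothesis of Proposition~\ref{specialp} for $\bar u$. In a) and b) this is cheap, supplied by the Sylow tower property of $A_{2'}\times S_3$. In c) it is the hard point: it reduces to (Gen-BP) for $A_{2'}\times S_5$, where the delicate case is that of $\bar u$ of order divisible by $6$ -- precisely the order-$6$ distributions over $\ZZ[\zeta_3]S_5$ that the HELP method alone does not eliminate (see Remark~\ref{s5}). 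Controlling these, using that $\bar u$ arises as the image of a unit whose $2$-part has order $8$ (cf.\ Remark~\ref{remark_2.S5}), is where the real work of part c) lies.
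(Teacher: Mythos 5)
Your reduction is the same as the paper's in all three parts: the same normal $2$-subgroups $M$ (namely $A_2\times V_4$, $A_2\times N$ with $N$ the preimage of $V_4$, and $A_2\times Z$), the same quotients $A_{2'}\times S_3$ resp.\ $A_{2'}\times S_5$, and the same appeal to Proposition~\ref{specialp}. Parts a) and b) are complete and correct: $A_{2'}\times S_3$ has a Sylow tower, so (Gen-BP) holds for it by Proposition~\ref{normalSylowp}, and $4\nmid\exp(A_{2'}\times S_3)$ forces $o(\bar u)<o(u)$, which is all Proposition~\ref{specialp} requires.

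Part c), however, is not proved. The hypothesis of Proposition~\ref{specialp} that you must verify is that $\varepsilon_{D/M[w]}(\bar u)\neq 0$ only for $w=o(\bar u)$, where $\bar u\in\V(\ZZ(A_{2'}\times S_5))$; you ``grant'' this and then remark that controlling the surviving distributions of partial augmentations for $S_5$ is ``where the real work of part c) lies''. That concession is exactly the missing step. The paper supplies it by observing that $A_{2'}\times S_5$ has no elements of order $8$, so $o(\bar u)<o(u)$ and the $2$-part of $o(\bar u)$ divides $4$; for $\bar u$ of order dividing $4$, Proposition~\ref{extendedHertweck_reduction} reduces the question to $\V(\ZZ[\zeta_m]S_5)$ with $m=\exp(A_{2'})$ odd, where $i\notin\ZZ[\zeta_m]$, so by Lemma~\ref{lemma:pas_in_Zzeta} the problematic order-$4$ distributions listed in Remark~\ref{s5} cannot occur and $\bar u$ is rationally conjugate to a group element, hence satisfies the trace hypothesis. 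Your concern about the order-$6$ distributions over $\ZZ[\zeta_3]S_5$ is pertinent only when $o(u)$ has a nontrivial odd part (e.g.\ $o(u)=24$), and in that case one has to either exclude such orders for $u$ or verify (Gen-BP) for the corresponding images of order $6$ or $12$ directly; as written, your argument does neither, so c) remains conditional.
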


\begin{proof} In case a) choose $M = \operatorname{O}_2(D) = A_2 \times V_4$, where
  $A_2 $ is the Sylow $2$-subgroup of $A$ and $V_4 = \operatorname{O}_2(S_4)$ denotes
  the Klein $4$-subgroup. Then
  $D/M \cong A_{2'} \times S_3 .$ Clearly $D/M$ has a Sylow tower. Thus 
(Gen-BP) holds for $D/M$ by Proposition \ref{normalSylowp}. Moreover 4 does not
divide the exponent of $A_{2'} \times S_3 .$ So   
Proposition \ref{specialp} completes this case. \\
For the proof of b) note that $|\operatorname{O}_2(G)|\geq 8$ and that $D/\operatorname{O}_2(D)$ is isomorphic to $ A_{2'} \times S_3.$ Thus we
  may argue as in case a). \\ 
     For c) choose $M = A_2 \times Z $, where $Z \cong C_2 $ denotes the
center of $2.S_5 .$  Then $D/M$ is a direct product of an abelian group
of odd order  and $S_5 .$ Clearly $D/M$ has no
elements of order $8$. By Remark~\ref{s5}, units of order dividing 4 of $\V(\ZZ
  D/M)$ are rationally conjugate to elements of $D/M .$ Thus for such
  units the
  generalized trace  $\varepsilon_{D[m]}(u) = 0 \ \mbox{if} \  m \neq o(u)$. 
  Now again Proposition~\ref{specialp} completes this
case.
\end{proof}

We will meet groups with all non-trivial elements of prime order in Proposition~\ref{firstpowerp}. 

\begin{remark}\label{Gpelements}
        If $G$ is a finite group such that every non-trivial element
        of $G$ has prime order, then (ZC1) holds for $G$. In
        particular (Gen-BP) is valid for $G$.  
\end{remark}
\begin{proof}
        By \cite{Chang93}, $G$ is either a $p$-group of exponent $p$,
        a Frobenius group of order $p^a\cdot q$ with $p$ and $q$
        different primes, or it is isomorphic to $A_5$. In all these
        three cases it is well known that (ZC1) holds for $G .$
    This follows e.g.\ from \cite{Weiss91}, \cite[Theorem 5.6]{Hertw06}, \cite{LP89}. 
\end{proof}

\begin{proposition} \label{firstpowerp} Let $D=N \times G$,
  where $N$ is a finite nilpotent group and $G$ is a finite group. 
  Assume that each non-trivial element of $G$ is of prime order. 
Then (Gen-BP) is valid for $D .$
\end{proposition}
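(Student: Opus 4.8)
The plan is to argue by induction on $|N|$, the base case $N = 1$ being immediate: there $D = G$, and since every nontrivial element of $G$ has prime order, (ZC1), and hence (Gen-BP), holds for $G$ by Remark~\ref{Gpelements}. The conceptual engine for the inductive step is to project onto the factor $G$. Writing $\pi \colon \ZZ D \to \ZZ G$ for the ring homomorphism induced by $D = N \times G \to G$ (killing $N$), the image $\pi(u)$ of a torsion unit $u \in \V(\ZZ D)$ is a normalized torsion unit of $\ZZ G$, and by (ZC1) for $G$ it is rationally conjugate to an element of $G$. Consequently $o(\pi(u))$ is the order of a group element of $G$, so $o(\pi(u)) \in \{1\} \cup \{\,\text{primes}\,\}$. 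This is the single point where the hypothesis on $G$ enters, and it is exactly what compensates for the fact that $D$ need not possess a normal Sylow subgroup; when $G \cong A_5$ there is no normal Sylow subgroup at all, so Proposition~\ref{normalSylowp} is unavailable and one is forced to use Proposition~\ref{specialp} instead (in the two remaining cases of Remark~\ref{Gpelements}, where $G$ is a $p$-group of exponent $p$ or a Frobenius group of order $p^a q$, the argument below still applies, although there Proposition~\ref{normalSylowp} would also suffice).

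First I would dispose of the case $o(\pi(u)) = o(u)$. Here $o(u)$ is $1$ or a prime $p$. If $o(u)=1$ then $u=1$ by the Berman--Higman Theorem; if $o(u) = p$, then $\varepsilon_g(u) \neq 0$ forces $o(g) \mid p$ by \ref{knownfacts}.\eqref{HertweckOrder}, so the only possibly nonzero generalized traces are $\varepsilon_{D[1]}(u)$ and $\varepsilon_{D[p]}(u)$; the former vanishes by the Berman--Higman Theorem, which gives (Gen-BP) for $u$ at once.

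The substantial case is $o(\pi(u)) < o(u)$. The aim is to find a prime $q$ with $N_q \neq 1$, where $N_q$ denotes the (characteristic, hence normal) Sylow $q$-subgroup of the nilpotent group $N$, such that already the reduction $\bar u$ of $u$ modulo $N_q$ satisfies $o(\bar u) < o(u)$. Once such a $q$ is produced, I would apply Proposition~\ref{specialp} with $M = N_q$: the quotient $D/N_q = (N/N_q) \times G$ is again of the required shape with $|N/N_q| < |N|$, so by the induction hypothesis (Gen-BP) holds for it, supplying exactly the assumption $\varepsilon_{(D/N_q)[w]}(\bar u) \neq 0 \iff w = o(\bar u)$; together with $o(\bar u) < o(u)$, Proposition~\ref{specialp} then yields $\varepsilon_{D[j]}(u) = 0$ for all $j \neq o(u)$, completing the step.

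The main obstacle is to localize the order drop, i.e.\ to pass from $o(\pi(u)) < o(u)$, which records killing all of $N$ simultaneously, to $o(\bar u) < o(u)$ for a single Sylow subgroup $N_q$. The key technical input is that reduction modulo a normal $q'$-subgroup preserves the order of a $q$-element of $\V(\ZZ D)$: a nontrivial $q$-element cannot map to $1$ modulo a normal $r$-subgroup with $r \neq q$, since by \ref{knownfacts}.\eqref{u_p_element} it would then be an $r$-element and hence trivial, and iterating this over the primes dividing the $q'$-subgroup and over the powers of the element shows its order is unchanged. Granting this, write $o(u) = \prod_q q^{a_q}$ and decompose $u = \prod_q u_q$ into its primary parts; since each $u_q$ is a power of $u$, these are respected by every ring homomorphism, so $o(\pi(u)) = \operatorname{lcm}_q o(\pi(u_q))$. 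Factoring $\pi$ as reduction modulo $N_q$ followed by reduction modulo the $q'$-part of $N$, the preservation statement gives $o(\pi(u_q)) = o(\overline{u_q})$, where $\overline{\phantom{x}}$ denotes reduction modulo $N_q$; the same statement shows $o(\overline{u_r}) = o(u_r)$ for $r \neq q$, whence $o(\bar u) < o(u)$ if and only if $o(\overline{u_q}) < q^{a_q}$. Since $o(\pi(u)) = \operatorname{lcm}_q o(\overline{u_q}) < \operatorname{lcm}_q q^{a_q} = o(u)$, there must be a prime $q$ with $o(\overline{u_q}) < q^{a_q}$; for that $q$ the order of $u$ genuinely drops modulo $N_q$, and $N_q \neq 1$ because the drop occurs under the reduction map it induces. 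This is precisely the prime sought above, and the induction closes.
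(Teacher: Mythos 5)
Your proof is correct, and while it rests on the same two pillars as the paper's argument -- Remark~\ref{Gpelements} for the factor $G$ and Proposition~\ref{specialp} applied with $M$ a Sylow subgroup of $N$ modulo which the order of $u$ drops -- the way you produce that Sylow subgroup is genuinely different and more uniform. The paper first treats the case where $N$ is a $p$-group, then inducts on the number of primes dividing $|N|$, separately disposing of units whose order is divisible by $p^2$ (via the exponent of $D/P$), reducing to square-free orders, and finally comparing the images $u_1$, $u_2$, $v$ of $u$ in $\ZZ D/P$, $\ZZ D/Q$ and $\ZZ D/(PQ)$ to show the order must already drop modulo one of $P$, $Q$. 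You replace this case analysis by a single localization lemma: decomposing $u$ into its primary parts and using \ref{knownfacts}.\eqref{u_p_element} to show that reduction modulo a normal $q'$-subgroup preserves the order of a $q$-element, you conclude that the drop $o(\pi(u)) < o(u)$ -- which is forced by (ZC1) for $G$ unless $o(u)$ is $1$ or a prime, a case you handle directly -- must already occur modulo a single nontrivial Sylow subgroup $N_q$ of $N$; induction on $|N|$ and Proposition~\ref{specialp} then finish. What each approach buys: yours handles all orders of $u$ at once and makes transparent exactly where the hypothesis on $G$ enters (only through $o(\pi(u))$ being $1$ or prime), while the paper's explicit subcases yield along the way the extra information that no unit of $\V(\ZZ D)$ can have order divisible by three distinct primes when $N$ is a $p$-group. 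The two points you leave implicit -- that the induction hypothesis upgrades to the full ``if and only if'' hypothesis of Proposition~\ref{specialp} because the generalized traces of $\bar u$ sum to its augmentation $1$, and that $N_q \neq 1$ is automatic for the prime where the drop occurs -- are both immediate, so there is no gap.
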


\begin{proof} Using Remark~\ref{Gpelements} we get that (Gen-BP) holds for $G$. 
        Let $u$ be a unit of $\V(\ZZ D)$ of prime order $r$. Then by
  \ref{knownfacts}.\eqref{HertweckOrder}, $\varepsilon_{D[m]}(u) = 0$ if $m \neq r$.
  
  Assume that $N$ is a $p$-group. Suppose that $u$ has order $p^k \cdot q$ with $p$ and $q$ different primes dividing $|G|$. 
  By assumption $G$ has no elements of mixed order. Thus it follows
  from \ref{knownfacts}.\eqref{u_p_element} that $u$ maps under $\ZZ D
  \rightarrow \ZZ G$ onto an element of order $q .$ We may 
  apply Proposition~\ref{specialp} with $M = N$ and obtain that 
        \begin{equation}\label{caseP-group}
\varepsilon_{D[m]}(u) = 0 \ \ \mbox{if} \ \ m \neq p^k \cdot q.
        \end{equation} 
  We claim that every element of $\V(\ZZ D)$ has order $p^k \cdot q$ with $p$ and $q$ different primes dividing $|G|$. Indeed, assume otherwise that $v\in \V(\ZZ D)$ has order $p^k \cdot q \cdot r$, with $r$ a prime different from $p$ and $q$. Then projecting $v$ via the natural homomorphism $\ZZ D \rightarrow \ZZ G$ we get an element of $\V(\ZZ G)$ whose order is multiple of $p\cdot q$, in contradiction with the assumptions on $G$. This finishes the proof of the claim. Therefore, combining \eqref{caseP-group} with the claim we deduce that (Gen-BP) follows in this
case.

We proceed by induction on the number of primes dividing $|N|. $ Let
$P$ be the Sylow $p$-subgroup and let $N = P \times M$.
   If $p\nmid o(u)$ then by \ref{knownfacts}.\eqref{HertweckOrder} we have 
$\varepsilon_{D[m]}(u) = \varepsilon_{D/P[m]}(\bar{u})$ for each $m$
dividing $|D/P|$, where
$\bar{u}$ is the image of $u$ under the map $\ZZ D \rightarrow \ZZ D/P.$
So by induction $\varepsilon_{D[m]}(u) = 0 $ if, and only if, $m =
o(u) .$ 
Suppose now that $o(u) = p^l \cdot k $ with $l \geq 1.$
If $l \geq 2$ then by Proposition~\ref{specialp} we get 
$$\varepsilon_{D[m]}(u) = 0, \ \mbox{if} \ m \neq o(u) .$$
Note that
the arguments work for each prime dividing $|N| .$ 
Thus it suffices to consider units of order $p \cdot q \cdot k $,
where
$p$ and $q$ are different primes and $k$ is square-free and coprime to $p \cdot q$.
Let $P$ and $Q$ be the Sylow subgroups of $N$ corresponding to $p$ and
$q$.

Denote the image of $u$ in $\ZZ D/P$ by $u_1$, in $\ZZ D/Q$ by $u_2$ and that one in 
$\ZZ D/(P \cdot Q)$ by $v$. Suppose that $u_1$ and $u_2$ both have order divisible by $p \cdot q$.
As $v$ is the image of $u_1$ under $\ZZ D/P \rightarrow \ZZ D/(P \cdot Q)$ and torsion units 
mapping to $1$ under this homomorphism are $q$-elements by \ref{knownfacts}.\eqref{u_p_element},
$v$ has order divisible by $p$. Similarly, looking at $u_2$, one gets that $v$ 
has order divisible by $q$. But the we also get a torsion unit of order $p\cdot q$ 
in $\V(\ZZ G)$, for which (Gen-BP) holds by assumption. This contradiction shows that
either $u_1$ or $u_2$ have order not divisible by $p \cdot q$.

W.l.o.g.\ we assume that $p \cdot q $ does not divide $o(u_1)$. 
Thus we may apply Proposition~\ref{specialp} with $M = P$.  Note  that by induction (Gen-BP) holds for $D/P .$    
Consequently 
$\varepsilon_{D[m]}(u) = 0 \ \ \mbox{ if} \ \ m \neq o(u)$.
\end{proof}

\begin{theorem} Let $G$ be a Frobenius group and let $A$ be a finite 
  abelian group. Then (Gen-BP) holds for $G \times A$. 
\end{theorem}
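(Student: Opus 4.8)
The plan is to peel off, step by step, the parts of $A$ and of the Frobenius complement for which (Gen-BP) is already under control, thereby reducing the statement to a direct product of a finite abelian group with one of a short list of exceptional Frobenius complements, which are then settled by the explicit HELP computations of this section. Write $G = N \rtimes C$ with Frobenius kernel $N$ and complement $C$; by Thompson's theorem $N$ is a nilpotent Hall subgroup of $G$. Split $A = A_1 \times A_2$, where $A_1$ is the Hall subgroup of $A$ of order coprime to $|C|$ and every prime dividing $|A_2|$ divides $|C|$. Since $\gcd(|N|,|C|) = 1$, the primes dividing $|A_2|$ are coprime to $|N|$, so $N \times A_1$ is a normal nilpotent Hall subgroup of $G \times A$ with $(G \times A)/(N \times A_1) \cong C \times A_2$. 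A nilpotent group has a Sylow tower, so Proposition~\ref{normalSylowp} reduces the claim to (Gen-BP) for $C \times A_2$ (this is exactly the reduction anticipated in Remark~\ref{remark_on_Frobenius}).

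Next I would split according to the structure of $C$ given by Proposition~\ref{Fcomplement}. If $C$ is metabelian, then $C \times A_2$ is metabelian as well, and (Gen-BP) holds by \cite[Corollary~1.4]{DokSeh}. Otherwise $C$ has a normal Hall subgroup $N_C$ with a Sylow tower such that $\gcd(|N_C|, |Q|) = 1$, where $Q := C/N_C$ is isomorphic to $C_2.\SL(2,3)$ (which has order $48$ and maps onto $S_4$), $\SL(2,5)$, $C_2.\SL(2,5) = 2.S_5$, or the trivial group. Splitting $A_2 = B_1 \times B_2$ along the disjoint sets of primes dividing $|N_C|$ and $|Q|$, the subgroup $N_C \times B_1$ is a normal Hall subgroup of $C \times A_2$ with a Sylow tower and quotient $Q \times B_2$, in which every prime dividing $|B_2|$ divides $|Q|$. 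A second application of Proposition~\ref{normalSylowp} reduces the problem to (Gen-BP) for $Q \times B_2$. When $Q$ is trivial this is the abelian, hence Sylow-tower, case, covered by Theorem~\ref{prop:corollary_sylowtower}.

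It remains to verify (Gen-BP) for $Q \times B_2$ when $Q \in \{C_2.\SL(2,3),\ \SL(2,5),\ 2.S_5\}$ and $B_2$ is abelian, which is the core of the argument. For $Q = \SL(2,5)$ the first Zassenhaus conjecture for $\SL(2,5) \times B_2$, and therefore (Gen-BP), holds by Remark~\ref{remarkSL25_S5_2S5}. For $Q = C_2.\SL(2,3)$ I would combine the final assertion of Proposition~\ref{ExtendedResult} (every unit of $\V(\ZZ(B_2 \times Q))$ of order not divisible by $4$ is rationally conjugate to a group element) with Corollary~\ref{specialorders}(b) (which gives $\varepsilon_{D[m]}(u) = 0$ for $m \neq o(u)$ whenever $4 \mid o(u)$), covering all orders. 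For $Q = 2.S_5$ I would dispose of units $u$ with $8 \mid o(u)$ by Corollary~\ref{specialorders}(c), and of units with $8 \nmid o(u)$ by Proposition~\ref{extendedHertweck_reduction}(b): by Remark~\ref{remark_2.S5} the extended HELP method resolves $\ZZ[\zeta_n](2.S_5)$ for every order $n \neq 8$, and by Lemma~\ref{lemma:pas_in_Zzeta} the corresponding statement over $\ZZ[\zeta_{\exp(B_2)}]$ holds for all orders dividing $o(u)$, since none of these equals $8$.

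The main obstacle is precisely this last step: the two exceptional complements $C_2.\SL(2,3)$ and $2.S_5$ do not have a Sylow tower, so the purely structural machinery of Section~\ref{sec:nilpotent-by-abelian} no longer applies and one must fall back on the explicit computations of the extended HELP method. The delicate point is the interplay between the order of a torsion unit in the product $Q \times B_2$ and the orders of units over $\ZZ[\zeta_m]$: one must ensure that the single genuinely open order (namely $8$ in both cases) is always caught, either because $8 \mid o(u)$, handled by the divisibility statements of Corollary~\ref{specialorders}, or because $8 \nmid o(u)$, in which case order $8$ never divides $o(u)$ and Proposition~\ref{extendedHertweck_reduction}(b) applies cleanly.
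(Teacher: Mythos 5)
Your proposal is correct and follows essentially the same route as the paper's proof: reduce via the normal nilpotent Hall subgroup $N\times A_1$ (Proposition~\ref{normalSylowp} / Remark~\ref{remark_on_Frobenius}) to $C\times B$ with $B$ abelian, dispose of metabelian and Sylow-tower complements structurally, and settle the exceptional quotients $\SL(2,3).2$, $\SL(2,5)$ and $\SL(2,5).2$ by combining Proposition~\ref{ExtendedResult}, Remark~\ref{remarkSL25_S5_2S5}, Remark~\ref{remark_2.S5} and Corollary~\ref{specialorders}~b),~c). The only quibble is notational: the exceptional quotients from Proposition~\ref{Fcomplement} are $\SL(2,3).C_2$ and $\SL(2,5).C_2$ (extensions \emph{by} $C_2$ on top), not $C_2.\SL(2,3)$ and $C_2.\SL(2,5)$, though you clearly identify the correct groups.
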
  

\begin{proof}
Using Remark~\ref{remark_on_Frobenius} and Remark~\ref{remarkSL25_S5_2S5} 
we get the following.  Let $C$ be a Frobenius complement of
  $G$. Then (Gen-BP) is valid for $G \times
  A$ provided this is the case for $C \times B$, where $B$ is abelian.
  If $C$ is metabelian then $C \times B$ is metabelian and (Gen-BP)
  holds by \cite[Corollary 1.4]{DokSeh}. If $C$ is not metabelian then as
  explained in the proof of Proposition \ref{Fcomplement} it has a
  Sylow tower if it does not map onto $\SL (2,3).2 $ or $\SL( 2,5).2$. \\
  If $C$ maps onto  $\SL (2,3).2$ or 
  onto $\SL(2,5).2$ then by Proposition \ref{ExtendedResult} and
  Remark~\ref{remark_2.S5} for $\SL(2,5).2$ 
 we see that 
the generalized trace of torsion units
  behaves as desired when the order of the unit is not
  divisible in the first case by $4$ or in the second case by
  $8$ (even rational conjugacy holds in these cases). 
Now Corollary \ref{specialorders} b) and c) establish the theorem.   
\end{proof}

\begin{theorem} Let $G$ be a  group with $|G|\leq 95$  and let
  $A$ be a finite
  abelian group. Then (Gen-BP) holds for $G \times A$.   
\end{theorem}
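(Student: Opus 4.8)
The plan is to split the groups $G$ of order at most $95$ according to Proposition~\ref{ExtendedResult}. For every such $G$ that does not map onto $S_4$ and is isomorphic to neither $A_5$ nor $S_3 \wr S_2$, Proposition~\ref{ExtendedResult} already yields (ZC1) for $G \times A$; a rationally conjugate torsion unit has a single non-vanishing partial augmentation, sitting at a class whose order equals $o(u)$, so (Gen-BP) is immediate in this case. It therefore remains to treat the three exceptional families, and I would deal with each separately using the machinery of Sections~\ref{sec:nilpotent-by-abelian}--\ref{sec:HELP}.

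For $G \simeq A_5$ I would invoke Proposition~\ref{firstpowerp} directly: writing $D = A \times A_5$, the factor $A$ is abelian and hence nilpotent, while every non-trivial element of $A_5$ has prime order ($2$, $3$ or $5$). Thus (Gen-BP) holds for $D$ with no restriction on $A$. For $G \simeq S_3 \wr S_2$ the key observation is that the Sylow $3$-subgroup $C_3 \times C_3$ of the base group is already normal in $S_3 \wr S_2$ (the top $C_2$ merely interchanges the two cyclic factors), with quotient $D_8$. Consequently $D = (S_3 \wr S_2) \times A$ has the normal Sylow $3$-subgroup $P = (C_3 \times C_3) \times A_3$, where $A_3$ is the Sylow $3$-subgroup of $A$, and $D/P \cong D_8 \times A_{3'}$ is nilpotent, so (Gen-BP) holds for $D/P$. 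Proposition~\ref{normalSylowp} then gives (Gen-BP) for $D$. Note that this bypasses entirely the distributions of partial augmentations of order $3$ and $6$ left open by the HELP method for $\ZZ[\zeta_3](S_3 \wr S_2)$, which only obstruct (ZC1), not (Gen-BP) (indeed for a unit of prime order (Gen-BP) already follows from \ref{knownfacts}.\eqref{HertweckOrder} and the Berman--Higman theorem).

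The remaining case is that $G$ maps onto $S_4$; here $24 \mid |G| \leq 95$ forces $|G| \in \{24, 48, 72\}$. Set $D = G \times A$ and $M = \operatorname{O}_2(D) = A_2 \times \operatorname{O}_2(G)$, so that $D/M \cong A_{2'} \times \left(G/\operatorname{O}_2(G)\right)$. For a torsion unit $u$ with $4 \nmid o(u)$, Proposition~\ref{ExtendedResult} says $u$ is rationally conjugate to an element of $D$, so its generalized trace vanishes off $o(u)$. For $u$ with $4 \mid o(u)$ I would apply Proposition~\ref{specialp} with $p = 2$: one checks that $G/\operatorname{O}_2(G)$ has cyclic Sylow $2$-subgroup of order $2$, whence $4 \nmid \exp(D/M)$ and $o(\bar{u}) < o(u)$, and that $D/M$ has a Sylow tower, so (Gen-BP) holds for $D/M$ by Theorem~\ref{prop:corollary_sylowtower}. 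Proposition~\ref{specialp} then forces $\varepsilon_{D[m]}(u) = 0$ for all $m \neq o(u)$. For $G = S_4$ and the order-$48$ groups mapping onto $S_4$ this is exactly Corollary~\ref{specialorders}~a),~b); the group $C_3 \times S_4$ reduces to part~a) after absorbing $C_3$ into the abelian factor (since $C_3 \times S_4 \times A = S_4 \times (C_3 \times A)$).

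The step I expect to require the most care is this last $S_4$-image case for the two order-$72$ groups that are \emph{not} of the shape $S_4 \times (\text{abelian})$, namely $\texttt{[72,15]}$ and $\texttt{[72,43]}$: for these one must verify, group by group, that $\operatorname{O}_2(G) = V_4$, that $G/\operatorname{O}_2(G)$ has order $18$ with a normal Sylow $3$-subgroup (hence a Sylow tower and $4 \nmid \exp$), and --- for the order-$48$ groups --- that $|\operatorname{O}_2(G)| \geq 8$, so that $G/\operatorname{O}_2(G)$ again has Sylow $2$-subgroup $C_2$. (A normal $2$-subgroup of $G$ maps to a normal $2$-subgroup of $S_4$, hence into $\operatorname{O}_2(S_4) = V_4$, which both bounds $\operatorname{O}_2(G)$ and forces the stated quotient structure.) These are routine structural checks on the finitely many groups of Table~\ref{tab:HELP}, but they are precisely what makes Proposition~\ref{specialp} applicable and thus constitute the genuine content beyond the earlier propositions. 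Once they are in place, the three exceptional cases together with Proposition~\ref{ExtendedResult} cover every $G$ with $|G| \leq 95$, completing the proof.
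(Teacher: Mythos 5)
Your proposal is correct and follows essentially the same route as the paper: Proposition~\ref{ExtendedResult} for the generic case, Proposition~\ref{firstpowerp} for $A_5$, the Sylow tower structure of $S_3 \wr S_2$ together with Proposition~\ref{normalSylowp} for that group, and reduction modulo a normal $2$-subgroup via Proposition~\ref{specialp} (Corollary~\ref{specialorders}) for the groups mapping onto $S_4$. The only cosmetic difference is that for the order-$72$ groups you quotient by $\operatorname{O}_2(D)$ and use a Sylow tower for $D/M$, whereas the paper quotients by a minimal normal $V_4$ of $G$ and uses (ZC1) for $G/M$; both verifications are equivalent in substance.
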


\begin{proof} By Proposition \ref{ExtendedResult} even the first
  Zassenhaus conjecture holds for $G \times A$ provided $G$ does not map onto $S_4$
  or $G$ does not coincide with $S_3 \wr C_2$ or $A_5$. \\
$S_3 \wr C_2 $ is a Sylow tower group thus also $A \times (S_3 \wr C_2) $
  is a Sylow tower group and Proposition~\ref{normalSylowp} yields the result. \\
(Gen-BP) holds for $A_5 ,$ cf. Remark \ref{Gpelements}. 
Now Proposition \ref{firstpowerp} establishes this case. \\
Assume now that $G$ maps onto $S_4. $   
By Proposition \ref{ExtendedResult} we know that each torsion unit whose order is not divisible by
  4 has
the desired generalized traces. Corollary \ref{specialorders} a) and b)
  rsp. 
  prove the theorem for $S_4  $ and when $G$ is a group of order $48$ 
  mapping
  onto $S_4 .$ \\
    Finally let $G$ be a group of order $72$ with $S_4$ as
  image. Then $G$ has a minimal normal subgroup $M$ isomorphic to $C_2
  \times C_2 .$ We have that  (ZC1) holds for $G/M$ and also $G/M$ has no elements of
  order 4. Thus Proposition~\ref{specialp} completes the proof. 
\end{proof}

\noindent\textbf{Acknowledgment}. We are very thankful to {\'A}ngel del R{\'i}o for
several useful conversations.

\end{document}